\documentclass[3p,12pt]{elsarticle}

\usepackage{amsfonts}
\usepackage{amssymb}
\usepackage{mathrsfs}
\usepackage{amsmath}
\usepackage{graphicx}
\usepackage{multirow}
\usepackage{amsthm}
\usepackage{subfig}
\usepackage{caption}
\usepackage{color}
\usepackage[hidelinks]{hyperref}
\numberwithin{equation}{section}

\newtheorem{theorem}{Theorem}[section]
\newtheorem{lemma}{Lemma}[section]
\newtheorem{example}{Example}[section]
\newtheorem{remark}{Remark}[section]
\newtheorem{definition}{Definition}[section]
\abovecaptionskip=8pt \belowcaptionskip=-2pt

\newtheorem{algo}[theorem]{Algorithm}
\journal{}
\journal{}

%
%

\allowdisplaybreaks
\begin{document}

\begin{figure}[!htbp]   
    \centering
    \includegraphics[height=10cm,width=16cm]{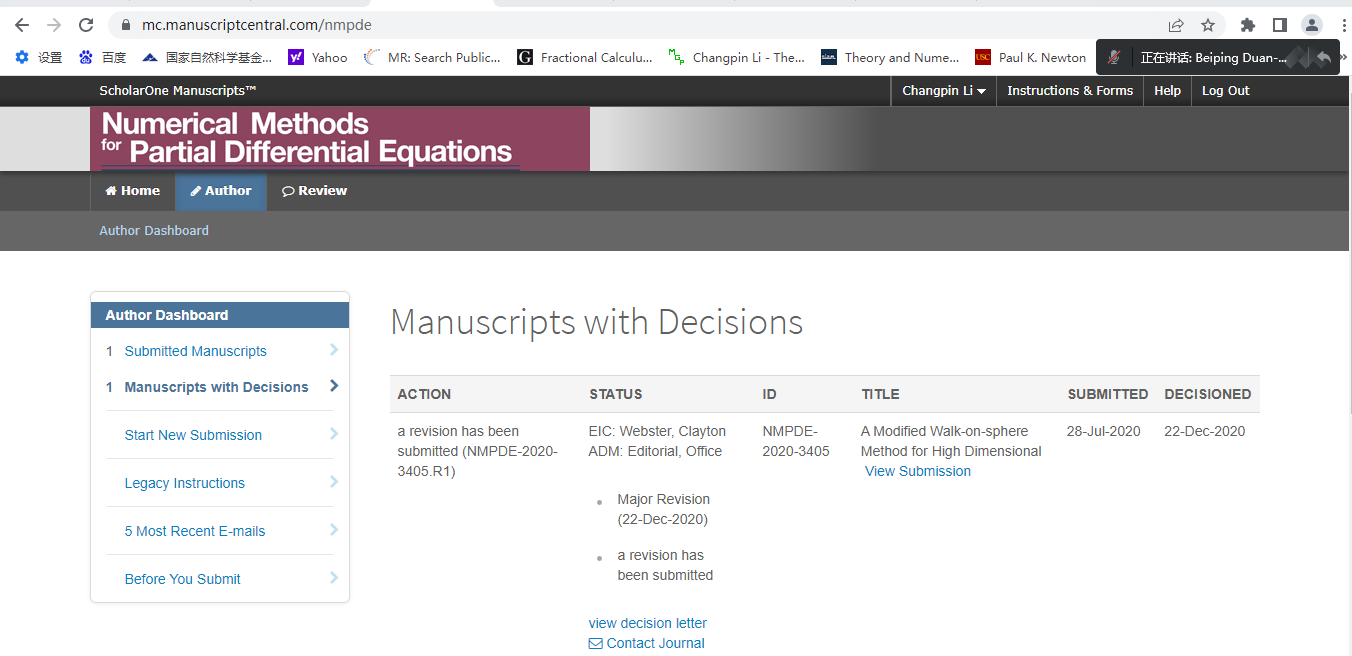}
\end{figure}

\begin{frontmatter}
\title{A Modified Walk-on-sphere Method for High Dimensional Fractional Poisson Equation{\footnote
{\footnotesize Li and Wang were supported by the National Natural Science Foundation of China under grant
nos. 11926319 and 11926336.  Zhang was partially supported by  ARO/MURI grant W911NF-15-1-0562.
}
}
}
\author{Caiyu Jiao\textsuperscript{a}}
\author{Changpin Li\textsuperscript{a}{$^{,\ast}$}}
\author{Hexiang Wang\textsuperscript{b}}
\author{Zhongqiang Zhang\textsuperscript{c}}
\address{\textsuperscript{a}Department of Mathematics, Shanghai University, Shanghai 200444, China}
\address{\textsuperscript{b}School of Mathematics and Statistics, Kashi University, Kashi 844006, Xinjiang, China}
\address{\textsuperscript{c}Department of Mathematical Sciences, Worcester Polytechnic Institute, Worcester, MA 01609 USA}
\cortext[cor1]{Corresponding author. E-mail: lcp@shu.edu.cn}
\begin{abstract}{  }
We develop walk-on-sphere method for fractional Poisson equations with Dirichilet boundary conditions in high dimensions.
 The walk-on-sphere method is based on probabilistic representation of the fractional Poisson equation. We propose
 efficient quadrature rules to {evaluate integral representation} in the ball and apply rejection sampling method
 to drawing from the computed probabilities in general domains. Moreover, we provide an estimate of the number of
 walks in the mean value for the method when the domain is a ball. We show that the number of walks is increasing
 in the fractional order and the distance of the starting point to the origin. We also give the relationship between
 the Green function of fractional Laplace equation and that of the classical Laplace equation. Numerical results
 for problems in 2-10 dimensions verify our theory and the efficiency of the modified walk-on-sphere method.
\end{abstract}

\begin{keyword}
walk on spheres, fractional Laplacian, modified walk-on-sphere method, inexact sampling
\end{keyword}
\end{frontmatter}

\section{Introduction}
The fractional Laplacian, $(-\Delta)^{s}$, is a prototypical operator for modeling nonlocal and anomalous phenomenon
which incorporates long range interactions \cite{Li&Yi2019}. It arises in many areas of applications, including models
for turbulent flows, porous media flows, pollutant transport, quantum mechanics, stochastic dynamics, and finance
\cite{Das2011,Herrmann2011,Hilfer2000,Oldham&Spanier1974}.

Numerical methods for fractional Laplacian operator and differential equations with fractional Laplacian operator
have been investigated in dozens of few papers, such as in finite difference methods \cite{Duo&Wyk&Zhang2018,Huang&Oberman2014,
Li&Cai2019}, spectral methods \cite{AcoBBM18,Zhang19,HaoZhang20}, finite element methods \cite{AcoBor15,AinsworthG18,Marta13}
and probabilistic methods \cite{Gao&Duan2014,Kyprianou&Osojnik2018}. See review papers \cite{BonBNe18,Lischke&Pang2020,DElDGe21}
for more details. All these methods are nonlocal and thus expensive in high dimensions, except the probabilistic methods.
While the most economical method is with quasi-linear complexity in number of physical nodes \cite{AinsworthG18} using
finite element methods in 2D, no numerical results are reported for Poisson equation with fractional Laplacian over general
bounded domain in high dimensions such as in three or much higher dimensions.

Probabilistic methods (usually implemented with Monte Carlo methods, say, for example, \cite{kloeden,lay}) for partial
differential equations with/without fractional Laplacian are based on the probabilistic representation of the Laplacian/fractional
Laplacian, see e.g. \cite{Applebaum2009}. These methods do not require any discretization in space. In one of such methods,
{walk-on-sphere method} (e.g. \cite{Muller1956}) does not even require discretization in time or even the diffusion trajectory
of the stochastic process.  Such probabilistic methods are particularly advantageous when the geometry domain $\Omega$ is very
complex or if the solution of the partial differential equation is required at  a relatively small number of points.

Though Monte Carlo methods need $\mathcal{O}(M^{2})$ walks to achieve standard deviation $\mathcal{O}(M^{-1})$, it is a reliable
method in arbitrarily high dimensions. In addition, they can be efficiently implemented on massively parallel computers.

In this work, we develop efficient probabilistic  methods in high dimensions for the following fractional Poisson equation
on an open bounded domain with an extended Dirichlet boundary value condition (see e.g. in \cite{Ros-Oton&Serra2014}):
 \begin{equation}\label{eq:possionproblem}
 \left\{
 \begin{aligned}
 &(-\Delta)^{s}u(\texttt{{\rm{x}}})=f(\texttt{{\rm{x}}}), &&\, \texttt{{\rm{x}}}\in\Omega,
 \\
 &u(\texttt{{\rm{x}}})=g(\texttt{{\rm{x}}}), &&\, \texttt{{\rm{x}}}\in \mathbb{R}^{n}\backslash \Omega,
 \end{aligned}
 \right.
 \end{equation}
 where $s\in(0,1)$, $n\geq1$ and we use the integral definition defined by a singular integral
 \cite{Pozrikidis2016} which coincides with Riesz derivative definition on the whole space \cite{Cai&Li2019},
 \begin{equation}\label{1pvfractionLaplacian}
 (-\Delta)^{s}u(\texttt{{\rm{x}}})=C(n,s)\,
 {\rm P.V.}\int_{\mathbb{R}^{n}}\frac{u(\texttt{{\rm{x}}})-u(\texttt{{\rm{y}}})}{|\texttt{{\rm{x}}}-\texttt{{\rm{y}}}|^{n+2s}}
 {\rm d}\texttt{{\rm{y}}}.
 \end{equation}
 Here ${\rm P.V.}$ stands for the Cauchy principle value and the constant $C(n,s)$ is given by \cite{Cai&Li2019}
 \begin{equation}
 C(n,s)=\left(\,\,\int_{\mathbb{R}^{n}}\frac{1-\cos\zeta_{1}}
 {|\zeta|^{n+2s}}{\rm d}\zeta\right)^{-1}=\frac{{s}2^{2s}\Gamma(\frac{n}{2}+s)}{{\pi}^{\frac{n}{2}}\Gamma(1-s)}
 \end{equation}
 with $\zeta_{1}$ being the first component of $\zeta=(\zeta_{1},\zeta_{2},\ldots,\zeta_{n})\in\mathbb{R}^{n}$
 and $\Gamma$ representing the Gamma function.

We develop our probabilistic method along the line of walk on spheres developed in   \cite{DeLaurentis&Romero1990,Elepov&Mihailov1973,Hwang&Mascagni2003,Kyprianou&Osojnik2018,Muller1956,
Sabelfeld1991}. We use the modified walk-on-sphere method based on Poisson kernel and
Green function to solve equation \eqref{eq:possionproblem}, which is also called ``one point random estimation"
(OPRE) method  {\cite{DeLaurentis&Romero1990}}. Specifically, every jump of one particle can be simulated with a  certain  probability until this particle is out of domain $\Omega$ (see Figure 1) and all of the particles'
processes compose the approximate solution.

The difficulty  in the implementation of this method is the computation of the probability in high dimensions,
which hasn't  been addressed in literature. We will introduce some quadrature methods in Sections 2-4 and use
rejection sampling {method} to draw samples whenever the exact sampling is not feasible. Another practical issue when implementing this approach is to estimate the average number of steps for it. When $s\rightarrow 1$, the Green function of fractional Laplacian equation is reduced to that of integer-order case. This issue is discussed in
Section \ref{sec:bound-steps-walks}.

\begin{figure}[!htbp]
	\centering
	\includegraphics[height=5.5cm, width=5.5cm]{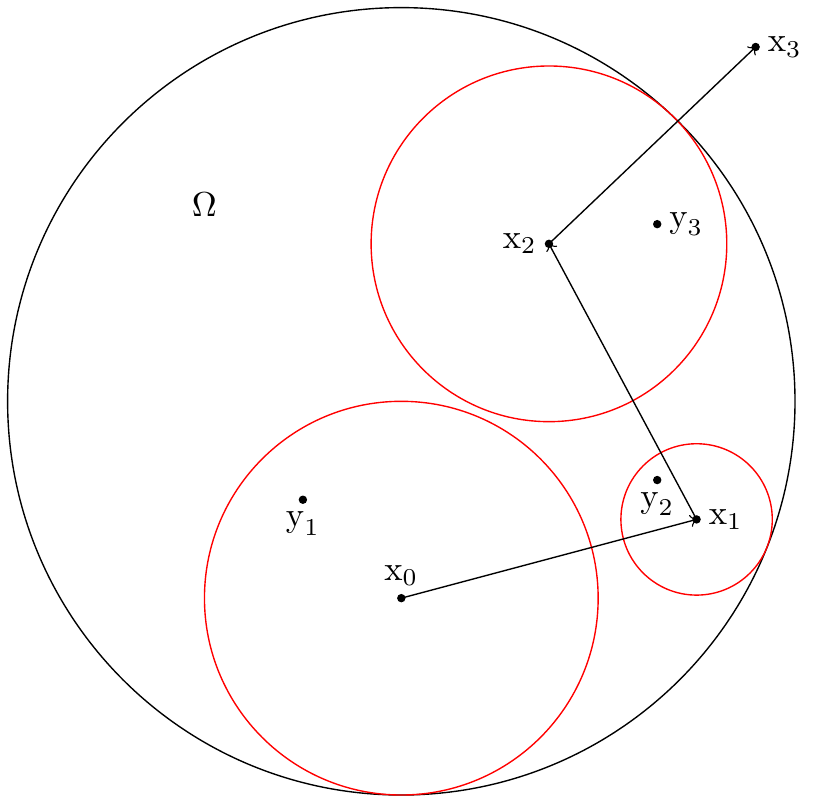}
	\captionsetup{font={footnotesize}}
	\caption{Steps of the modified walk-on-sphere algorithm until exiting the domain $\Omega$} \label{1fig1}
	
\end{figure}

The contributions of this work are summarized as follows.
\begin{itemize}
\item We apply quadrature rules to the singular representation of walk on spheres and then numerically solve
equation (\ref{eq:possionproblem}) in $n$-dimensional ball. We present some convergence analysis of the proposed
approach. Compared with \cite{Kyprianou&Osojnik2018}, we give the deterministic numerical method to solve equation
in $n$-dimensional ball.
	
\item We provide the modified walk-on-sphere method to numerically solve equation (\ref{eq:possionproblem}) on
general domains in high dimensions. We find approximate probabilities of walk on spheres and draw samples from
these probabilities. Compared with \cite{Kyprianou&Osojnik2018}, the current work can be applied to arbitrary
dimensions by rejection sampling method and reduces the computational time because of OPRE method.
We illustrate the efficiency of the proposed approach using a numerical example of fractional Poisson equation
in ten dimensions.
	
\item We give an upper bound of the number of walks in the modified walk-on-sphere method for fractional Poisson
equation in a ball. The upper bound is a function increasing with respect to the fractional order $s$ and the
distance of the starting point  $x_0$ to the origin. When $s\rightarrow 1$, Green function for the fractional
Laplacian equation degenerates into that of the classical Laplace equation.
\end{itemize}

The rest of this paper is outlined as follows. In Section \ref{sec:prob-reps-frac}, we present the probabilistic
representation for the homogeneous problem \eqref{eq:possionproblem} where the domain $\Omega$ is a ball. We also
present quadrature rules to approximate the integrals in the representation.

In Section \ref{sec:mod-walk-on-spheres}, we present the modified walk-on-sphere algorithm for the equation
(\ref{eq:possionproblem}) on an open bounded domain $\Omega$ in one dimension and high dimensions. For high dimensional
problems, we present a simple and efficient rejection sampling method based on the truncated Gaussian distribution
to draw samples from the probabilities of random walks.

In Section \ref{sec:bound-steps-walks}, we derive an estimate  of number of walks for the method when the problem
is considered on a ball. We show that  the number of walks is increasing with respect to the fractional order and the distance of the starting point to the origin. We also give the relationship of Green functions between the fractional Laplacian and the classical Laplace equation.

In Section \ref{sec:num-exm}, numerical experiments are performed to confirm the convergent order of the proposed
numerical evaluations in Section \ref{sec:prob-reps-frac} and {the efficiency of the modified walk-on-sphere method
in Section \ref{sec:mod-walk-on-spheres}}. Finally, we summarize our work in the last section.

\section{Probabilistic representation for fractional Poisson equation} \label{sec:prob-reps-frac}
In this section, we give an integral representation of $u(\texttt{{\rm{x}}})$ in \eqref{eq:possionproblem} where $\Omega$
is a ball centered at $\texttt{{\rm{x}}}\in \mathbb{R}^{n}$. The representation formula of the homogeneous equation
is discretized by using quadrature formula and the corresponding error estimates are derived for $n$ dimensional case.

\subsection{Fractional Poisson equation on a ball}

We start from equation \eqref{eq:possionproblem} where $\Omega$ is a ball centered at the origin with radius $r>0$, i.e.,
\begin{equation}\label{eq:possionball}
\left\{
\begin{aligned}
&(-\Delta)^{s}u(\texttt{{\rm{x}}})=f(\texttt{{\rm{x}}}), \,&& \texttt{{\rm{x}}}\in \mathbb{B}_{r},
\\
&u(\texttt{{\rm{x}}})=g(\texttt{{\rm{x}}}),\, && \texttt{{\rm{x}}}\in \mathbb{R}^{n}\backslash \mathbb{B}_{r}.
\end{aligned}
\right.
\end{equation}
To give the integral representation for $u(\texttt{{\rm{x}}})$, we introduce the following definitions.

\begin{definition}{\rm{(\cite{Bucur2016})}}
Let $r>0$ be fixed. For any $\texttt{{\rm{x}}} \in \mathbb{B}_{r}$ and any $\texttt{{\rm{y}}} \in \mathbb{R}^{n}\backslash
\overline{\mathbb{B}}_{r}$, the Poisson kernel $P_{r}$ is defined by
\begin{equation}
P_{r}(\texttt{{\rm{x}}},\texttt{{\rm{y}}})=\alpha(n,s)\left(\frac{r^{2}-|\texttt{{\rm{x}}}|^{2}}{|\texttt{{\rm{y}}}|^{2}-
r^{2}}\right)^{s}\frac{1}{|\texttt{{\rm{x}}}-\texttt{{\rm{y}}}|^{n}},
\end{equation}
where the constant $\alpha(n,s)$ is given by
\begin{equation}
\alpha(n,s)=\frac{\Gamma(\frac{n}{2})\sin(\pi s)}{{\pi}^{\frac{n}{2}+1}}.
\end{equation}
\end{definition}

\begin{definition}{\rm{(\cite{Bucur2016})}}
Let $r>0$ be fixed. For any $\texttt{{\rm{x}}},\texttt{{\rm{y}}}\in \mathbb{B}_{r}$ and $\texttt{{\rm{x}}}\neq \texttt{{\rm{y}}}$,
Green function $G$ is defined by
\begin{equation}\label{Greenfunction}
G(\texttt{{\rm{x}}},\texttt{{\rm{y}}})=\left\{
\begin{aligned}
&\kappa(1,\frac{1}{2})\log\left({\frac{r^2-xy+\sqrt{(r^2-x^2)(r^2-y^2)}}{r|x-y|}}\right), \, &&n=1,
\\
&\kappa(n,s) |\texttt{{\rm{x}}}-\texttt{{\rm{y}}}|^{2s-n}\int_{0}^{r^{\ast}(\texttt{{\rm{x}}},\texttt{{\rm{y}}})}
\frac{t^{s-1}}{(t+1)^{\frac{n}{2}}}{\rm d}t,\, &&n \geq 2,
\end{aligned}
\right.
\end{equation}
where
\begin{equation}
r^{\ast}(\texttt{{\rm{x}}},\texttt{{\rm{y}}})=\frac{(r^{2}-|\texttt{{\rm{x}}}|^{2})(r^{2}-|\texttt{{\rm{y}}}|^{2})}{r^{2}
|\texttt{{\rm{x}}}-\texttt{{\rm{y}}}|^{2}},
\end{equation}
and $\kappa(n,s)$ denotes a normalization constant
\begin{equation}
\kappa(n,s)=\left\{
\begin{aligned}
&\frac{1}{\pi}, \, && n=1,
\\
&\frac{\Gamma (\frac{n}{2})}{2^{2s} {\pi}^{\frac{n}{2}}{\Gamma^{2}(s)}}, \, &&n \geq 2.
\end{aligned}
\right.
\end{equation}
\end{definition}

\noindent Then the representation formula for \eqref{eq:possionball} is stated in the following theorem.
\begin{theorem}{\rm{(\cite{Bucur2016})}}\label{integralrepre}
Let $r>0$, $f\in C^{2s+\varepsilon}(\mathbb{B}_{r})\cap C(\overline{\mathbb{B}}_{r})$ for sufficiently small $\varepsilon >0$
and $g\in L_{s}^{1}(\mathbb{R}^{n}) \cap C({\mathbb{R}^{n}})$. Then there exists a unique continuous solution to \eqref{eq:possionball}
which is given by
\begin{equation}\label{reprformula}
  u(\texttt{{\rm{x}}})=\int_{\mathbb{R}^{n}\backslash \mathbb{B}_{r}}P_{r}(\texttt{{\rm{x}}},\texttt{{\rm{y}}})g(\texttt{{\rm{y}}})
  {\rm d}\texttt{{\rm{y}}}+\int_{\mathbb{B}_{r}}G(\texttt{{\rm{x}}},\texttt{{\rm{y}}})f(\texttt{{\rm{y}}}){\rm d}\texttt{{\rm{y}}}.
\end{equation}
\end{theorem}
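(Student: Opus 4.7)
The plan is to prove existence via the representation formula and uniqueness via a maximum principle, splitting the problem into two subproblems by linearity of $(-\Delta)^s$. Write $u = u_g + u_f$, where $u_g$ is defined by the Poisson-kernel integral and $u_f$ by the Green-function integral in \eqref{reprformula}, and show that $u_g$ solves the homogeneous equation with boundary data $g$, while $u_f$ solves the inhomogeneous equation with zero exterior data. Adding the two solutions will yield a solution to \eqref{eq:possionball}.

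For $u_g$, I would verify that the Poisson kernel is $s$-harmonic in the first variable, i.e.\ $(-\Delta)^s_{\texttt{\rm x}} P_r(\texttt{\rm x},\texttt{\rm y}) = 0$ for $\texttt{\rm x}\in\mathbb{B}_r$, $\texttt{\rm y}\notin\overline{\mathbb{B}}_r$. One way is a direct computation exploiting the conformal-type factor $\bigl((r^2-|\texttt{\rm x}|^2)/(|\texttt{\rm y}|^2-r^2)\bigr)^s$; more conceptually, one recognizes $P_r$ as the density of the first exit distribution of an isotropic $2s$-stable Lévy process from $\mathbb{B}_r$, so that $u_g(\texttt{\rm x})=\mathbb{E}^{\texttt{\rm x}}[g(X_{\tau_{\mathbb{B}_r}})]$, which is automatically $s$-harmonic by the Markov property. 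The boundary/exterior matching $u_g=g$ on $\mathbb{R}^n\setminus\overline{\mathbb{B}}_r$ follows from the delta-like concentration of $P_r(\texttt{\rm x},\cdot)$ as $\texttt{\rm x}$ approaches $\partial\mathbb{B}_r$ from inside, using the regularity of $g$ and the condition $g\in L^1_s(\mathbb{R}^n)\cap C(\mathbb{R}^n)$ to justify taking the limit under the integral.

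For $u_f$, I would use the defining property of the Green function, namely $(-\Delta)^s G(\cdot,\texttt{\rm y})=\delta_{\texttt{\rm y}}$ in the distributional sense on $\mathbb{B}_r$ with $G(\cdot,\texttt{\rm y})=0$ in $\mathbb{R}^n\setminus\mathbb{B}_r$. Applying $(-\Delta)^s$ to $u_f(\texttt{\rm x})=\int_{\mathbb{B}_r} G(\texttt{\rm x},\texttt{\rm y})f(\texttt{\rm y})\,{\rm d}\texttt{\rm y}$ and interchanging the operator with the integral (justified by the $C^{2s+\varepsilon}$ regularity of $f$, which upgrades the pointwise regularity of $u_f$ near any fixed $\texttt{\rm x}\in\mathbb{B}_r$) produces $(-\Delta)^s u_f=f$ in $\mathbb{B}_r$, and the vanishing of $G$ outside $\mathbb{B}_r$ gives $u_f=0$ there. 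A parallel probabilistic viewpoint is $u_f(\texttt{\rm x})=\mathbb{E}^{\texttt{\rm x}}\bigl[\int_0^{\tau_{\mathbb{B}_r}}f(X_t)\,{\rm d}t\bigr]$, and the identity $\mathbb{E}^{\texttt{\rm x}}\bigl[\int_0^{\tau_{\mathbb{B}_r}}\varphi(X_t)\,{\rm d}t\bigr]=\int_{\mathbb{B}_r}G(\texttt{\rm x},\texttt{\rm y})\varphi(\texttt{\rm y})\,{\rm d}\texttt{\rm y}$ characterizes $G$ as the expected occupation density.

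Uniqueness follows from the weak maximum principle for $(-\Delta)^s$ on $\mathbb{B}_r$: if $v$ is continuous on $\mathbb{R}^n$, satisfies $(-\Delta)^s v=0$ in $\mathbb{B}_r$ and $v=0$ on $\mathbb{R}^n\setminus\mathbb{B}_r$, then $v\equiv 0$; applying this to the difference of two solutions concludes the argument. The main obstacle I anticipate is the rigorous justification of the interchange of $(-\Delta)^s$ with the Green-function integral because of the $|\texttt{\rm x}-\texttt{\rm y}|^{2s-n}$ singularity of $G$ at the diagonal, together with verifying the continuity of $u$ across $\partial\mathbb{B}_r$: the fractional Laplacian imposes only Hölder-type regularity up to the boundary (the classical $C^s$ boundary behavior), so extra care is needed in splitting the singular integral in the principal-value definition \eqref{1pvfractionLaplacian} into pieces near and far from the singularity and in controlling $P_r(\texttt{\rm x},\texttt{\rm y})$ as $\texttt{\rm x}\to\partial\mathbb{B}_r$ using the exponent $s$ and the regularity assumptions on $f$ and $g$.
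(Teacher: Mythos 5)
The paper does not prove this theorem; it is imported verbatim from \cite{Bucur2016}, and your outline follows essentially the same route as that reference (and the classical Riesz/Blumenthal--Getoor--Ray treatment): linear splitting $u=u_g+u_f$, $s$-harmonicity of the Poisson kernel plus its interpretation as the exit law of the isotropic $2s$-stable process, the Green function as the occupation density satisfying $(-\Delta)^s G(\cdot,\texttt{\rm y})=\delta_{\texttt{\rm y}}$ with vanishing exterior data, and uniqueness by the nonlocal maximum principle. One small imprecision: for $\texttt{\rm x}\notin\overline{\mathbb{B}}_r$ the Poisson-kernel integral is not defined, so the exterior condition $u_g=g$ is imposed by definition there; the delta-like concentration of $P_r(\texttt{\rm x},\cdot)$ as $\texttt{\rm x}\to\partial\mathbb{B}_r$ is what yields continuity of the glued function across the boundary, not the exterior matching itself.
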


From Theorem \ref{integralrepre}, we can derive the representation formula for problem \eqref{eq:possionproblem}
with $\Omega$ being an arbitrary ball, centered at $\texttt{{\rm{x}}}_{0}\in \mathbb{R}^{n}$, namely
\begin{equation}\label{eq:possionball1}
\left\{
\begin{aligned}
 &(-\Delta)^{s}u(\texttt{{\rm{x}}})=f(\texttt{{\rm{x}}}), \,&& \texttt{{\rm{x}}}\in \mathbb{B}_{r}(\texttt{{\rm{x}}}_{0}),
 \\
&u(\texttt{{\rm{x}}})=g(\texttt{{\rm{x}}}), \, && \texttt{{\rm{x}}}\in \mathbb{R}^{n}\backslash \mathbb{B}_{r}(\texttt{{\rm{x}}}_{0}).
\end{aligned}
\right.
\end{equation}

\noindent{Through variable translation and replacement, we obtain}
\begin{equation}
  u(\texttt{{\rm{x}}})=\int_{\mathbb{R}^{n}\backslash \mathbb{B}_{r}(\texttt{{\rm{x}}}_{0})}P_{r}(\texttt{{\rm{x}}}-\texttt{{\rm{x}}}_{0},\texttt{{\rm{y}}}-
  \texttt{{\rm{x}}}_{0})g(\texttt{{\rm{y}}}){\rm d}\texttt{{\rm{y}}}+\int_{\mathbb{B}_{r}(\texttt{{\rm{x}}}_{0})}G(\texttt{{\rm{x}}}-\texttt{{\rm{x}}}_{0},
  \texttt{{\rm{y}}}-\texttt{{\rm{x}}}_{0})f(\texttt{{\rm{y}}}){\rm d}\texttt{{\rm{y}}}.
\end{equation}

\subsection{Numerical method  for \eqref{eq:possionball} using the Poisson kernel}

In this subsection, we first derive the numerical method for the following Dirichlet problem,
\begin{equation}\label{eq:Dirichlet problem}
\left\{
 \begin{aligned}
 &(-\Delta)^{s}u(\texttt{{\rm{x}}})=0,\, && \texttt{{\rm{x}}}\in \mathbb{B}_{r},
 \\ &u(\texttt{{\rm{x}}})=g(\texttt{{\rm{x}}}), \,&& \texttt{{\rm{x}}}\in \mathbb{R}^{n}\backslash \mathbb{B}_{r}.
 \end{aligned}
 \right.
\end{equation}

From Theorem \ref{integralrepre},
\begin{equation}
  u(\texttt{{\rm{x}}})=\int_{\mathbb{R}^{n}\backslash \mathbb{B}_{r}}P_{r}(\texttt{{\rm{x}}},\texttt{{\rm{y}}})
  g(\texttt{{\rm{y}}}){\rm d}\texttt{{\rm{y}}},
\end{equation}
provided that $g(\texttt{{\rm{x}}})$ satisfies the condition {in Theorem \ref{integralrepre}}.

To compute this integral in the above formula,  we use change of variables by utilizing the hyperspherical
coordinates with radius $\rho >r$, angles $\varphi_{1},\varphi_{2},\cdots ,\varphi_{n-2} \in [0,\pi]$, and
$\theta \in [0,2\pi]$. Then, it holds {for $n\geq3$} that
\begin{equation}\label{3hypercoordinate}
\left\{
 \begin{aligned}
  &y_{1} =\rho\sin{\varphi_{1}}\sin{\varphi_{2}}\cdots\sin{\varphi_{n-2}}\sin{\theta},
  \\
  &y_{2} =\rho\sin{\varphi_{1}}\sin{\varphi_{2}}\cdots\sin{\varphi_{n-2}}\cos{\theta},
  \\
  &y_{3} =\rho\sin{\varphi_{1}}\sin{\varphi_{2}}\cdots\cos{\varphi_{n-2}},
  \\
  &\cdots
  \\
  &y_{n-1} =\rho \sin{\varphi_{1}}\cos{\varphi_{2}},
  \\
  &y_{n} =\rho\cos{\varphi_{1}}.
 \end{aligned}
 \right.
\end{equation}
The Jacobian of the transformation is given by
$\rho^{n-1}\sin^{n-2}{(\varphi_{1})}\sin^{n-3}{(\varphi_{2})}\cdots\sin{(\varphi_{n-2})}$.
Here we discuss the case with $n\geq3$. Two dimensional case can be derived similarly
so is omitted here or is left for the interested reader.
Without loss of generality and up to rotations, we assume {$\texttt{{\rm{x}}}=e_{n}=|\texttt{{\rm{x}}}|(0,0,\ldots,1)$, so (see Figure \ref{fig1})
\begin{equation}
 |\texttt{{\rm{x}}}-\texttt{{\rm{y}}}|^{2}=
  \rho^{2}+|\texttt{{\rm{x}}}|^{2}-2\rho |\texttt{{\rm{x}}}|\cos\varphi_{1},   \qquad        n\geq 3.
\end{equation}
\begin{figure}[!htbp]
    \centering
    \includegraphics[height=6cm,width=6cm]{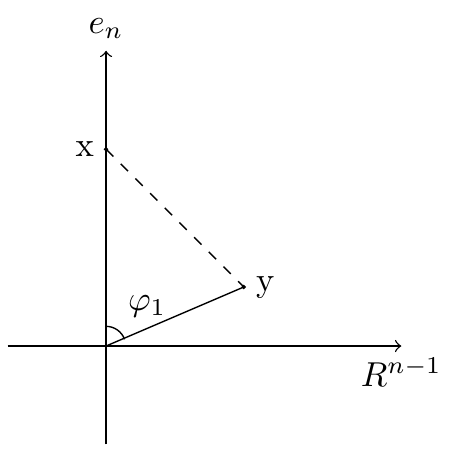}
 \captionsetup{font={footnotesize}}
 \caption{Distance between $\texttt{{\rm{x}}}$ and $\texttt{{\rm{y}}}$.}\label{fig1}
\end{figure}
}
Now we have
\begin{equation}
 \begin{aligned}
   u(\texttt{{\rm{x}}})&=\alpha(n,s)(r^2-|\texttt{{\rm{x}}}|^{2})^{s}\int_{0}^{\pi}\ldots\int_{0}^{2\pi}
   \int_{r}^{\infty} (\rho^{2}-r^{2})^{-s}\frac{g(\rho,\theta,\varphi_{1},\ldots,\varphi_{n-2})}{({\rho^{2}+
   |\texttt{{\rm{x}}}|^{2}-2\rho |\texttt{{\rm{x}}}|\cos\varphi_{1}})^{\frac{n}{2}}}
   \\
  &\quad\times{\rho^{n-1}\sin^{n-2}{(\varphi_{1})}\sin^{n-3}{(\varphi_{2})}\cdots\sin{(\varphi_{n-2})}}{\rm d}\rho\,
  {\rm d}\theta\,{\rm d}\varphi_{1}\cdots{\rm d}\varphi_{n-2}.
  \end{aligned}
\end{equation}
To compute the improper integral,  we perform the substitution
$\rho=\frac{r}{{\rho}'}$ and rename $\rho'$ as $\rho$,
\begin{equation}
 \begin{aligned}
  u(\texttt{{\rm{x}}})&=\alpha(n,s)(r^2-|\texttt{{\rm{x}}}|^2)^{s}r^{n-2s}\int_{0}^{\pi}\ldots\int_{0}^{2\pi}
  \int_{0}^{1} {\rho}^{2s-1}(1-\rho^{2})^{-s} \frac{g(\frac{r}{\rho},\theta,\varphi_{1},\ldots,\varphi_{n-2})}{({r^{2}+{\rho}^2|\texttt{{\rm{x}}}|^{2}-2r\rho |\texttt{{\rm{x}}}|\cos\varphi_{1}})^{\frac{n}{2}}}
  \\
  &\quad\times{\sin^{n-2}{(\varphi_{1})}\sin^{n-3}{(\varphi_{2})}\cdots\sin{(\varphi_{n-2})}}{\rm d}\rho\,
  {\rm d}\theta\,{\rm d}\varphi_{1}\cdots{\rm d}\varphi_{n-2}.
  \end{aligned}
\end{equation}

When $s\in (0,\frac{1}{2})$, we separate integral into two parts as follows
\begin{equation}\label{disJ1}
 \begin{aligned}
  &u(\texttt{{\rm{x}}}) =\alpha(n,s)(r^2-|\texttt{{\rm{x}}}|^2)^{s}r^{n-2s}\int_{0}^{\pi}\ldots{\int_{0}^{2\pi}}
  \left({\int_{0}^{\frac{1}{2}}}+{\int_{\frac{1}{2}}^{1}}\right){\rho}^{2s-1}(1-\rho^{2})^{-s}
  \\
  &\times\frac{g(\frac{r}{\rho},\theta,\varphi_{1},\ldots,\varphi_{n-2})}{({r^{2}+{\rho}^2|\texttt{{\rm{x}}}|^{2}-2r\rho |\texttt{{\rm{x}}}|\cos\varphi_{1}})^{\frac{n}{2}}}
  {\sin^{n-2}{(\varphi_{1})}\sin^{n-3}{(\varphi_{2})}\cdots\sin{(\varphi_{n-2})}}{\rm d}\rho\,{\rm d}\theta\,
  {\rm d}\varphi_{1}\cdots{\rm d}\varphi_{n-2}
  \\
  &=c(n,s)(r^2-|\texttt{{\rm{x}}}|^2)^{s}r^{n-2s}\big[I_{1}(\texttt{{\rm{x}}})+I_{2}(\texttt{{\rm{x}}})\big].
 \end{aligned}
\end{equation}
 Through change of variable $\rho=\frac{{\rho}'}{2}$ and $\theta=2\pi{\theta}'$, $\varphi_{1}=\pi\varphi'_{1}$,...,
  $\varphi_{n-2}=\pi\varphi'_{n-2}$, $I_{1}(\texttt{{\rm{x}}})$ can be rewritten as
\begin{equation}\label{I1forJ1}
 \begin{aligned}
  I_{1}(\texttt{{\rm{x}}})&={\pi}^{n-1}\int_{0}^{1}\int_{0}^{1}\ldots\int_{0}^{1}\left(\frac{{\rho}'}{2}\right)^{2s-1}
  \left[1-\left(\frac{{\rho}'}{2}\right)^{2}\right]^{-s}\frac{g(\frac{2r}{\rho'},2\pi\theta',\pi\varphi'_{1},\ldots,
  \pi\varphi'_{n-2})}{({r^{2}+({\frac{\rho'}{2}})^2|\texttt{{\rm{x}}}|^{2}-r\rho'|\texttt{{\rm{x}}}|\cos(\pi\varphi_{1}')})^{\frac{n}{2}}}
  \\
  &\quad\times{\sin^{n-2}{(\pi\varphi'_{1})}\sin^{n-3}{(\pi\varphi'_{2})}\cdots\sin{(\pi\varphi'_{n-2})}}{\rm d}\rho'\,{\rm d}\theta'\,{\rm d}\varphi'_{1}\cdots{\rm d}\varphi'_{n-2}
  \\
  &={\pi}^{n-1}\int_{0}^{1}\int_{0}^{1}\ldots\int_{0}^{1}\left(\frac{{\rho}}{2}\right)^{2s-1}\omega_{1}(\texttt{{\rm{x}}},
  \rho,\theta,\varphi_{1},\ldots,\varphi_{n-2}){\rm d}\rho\,{\rm d}\theta\,{\rm d}\varphi_{1}\cdots{\rm d}\varphi_{n-2},
 \end{aligned}
\end{equation}
where
\begin{equation}
 \begin{aligned}
  \omega_{1}(\texttt{{\rm{x}}},\rho,\theta,\varphi_{1},\ldots \varphi_{n-2})&=\left[1-\left( \frac{\rho}{2}\right)^2\right]^{-s}\frac{g(\frac{2r}{\rho},2\pi\theta,\pi\varphi_{1},\ldots,\pi\varphi_{n-2})}{({r^{2}+
  {(\frac{\rho}{2}})^2|\texttt{{\rm{x}}}|^{2}-r\rho|\texttt{{\rm{x}}}|\cos\pi\varphi_{1}})^{\frac{n}{2}}}
  \\
  &\quad \times
  {\sin^{n-2}{(\pi\varphi_{1})}\sin^{n-3}{(\pi\varphi_{2})}\cdots\sin{(\pi\varphi_{n-2})}}
 \end{aligned}
\end{equation}
  By the affine transformations $\rho=\frac{1}{2}\rho'+\frac{1}{2}$ and $\theta=2\pi{\theta}'$, $\varphi_{1}=\pi\varphi'_{1}$,..., $\varphi_{n-2}=\pi\varphi'_{n-2}$, $I_{2}(\texttt{{\rm{x}}})$ is given by
\begin{equation}\label{I2forJ1}
 \begin{aligned}
  I_{2}(\texttt{{\rm{x}}})&={\pi}^{n-1}\int_{0}^{1}\ldots\int_{0}^{1}\left[1-\left(\frac{{\rho'}+1}{2}\right)^{2}\right]^{-s}
  \frac{g(\frac{2r}{\rho'+1},2\pi\theta',\pi\varphi'_{1},\ldots,\pi\varphi'_{n-2})}{({r^{2}+({\frac{\rho'+1}{2}})^2
  |\texttt{{\rm{x}}}|^{2}-r(\rho'+1)|\texttt{{\rm{x}}}|\cos\pi\varphi'_{1}})^{\frac{n}{2}}}
  \\
  &\times \left(\frac{{\rho}'+1}{2}\right)^{2s-1}{\sin^{n-2}{(\pi\varphi'_{1})}\sin^{n-3}{(\pi\varphi'_{2})}\cdots
  \sin{(\pi\varphi'_{n-2})}}{\rm d}\rho'\,{\rm d}\theta'\,{\rm d}\varphi'_{1}\cdots{\rm d}\varphi'_{n-2}
  \\
  &={\pi}^{n-1}\int_{0}^{1}\ldots\int_{0}^{1}\left(\frac{1-{\rho}}{2}\right)^{-s}\omega_{2}(\texttt{{\rm{x}}},\rho,\theta,
  \varphi_{1},\ldots \varphi_{n-2}){\rm d}\rho\,{\rm d}\theta\,{\rm d}\varphi_{1}\cdots{\rm d}\varphi_{n-2},
 \end{aligned}
\end{equation}
where
\begin{equation}
 \begin{aligned}
  \omega_{2}(\texttt{{\rm{x}}},\rho,\theta,\varphi_{1},\ldots \varphi_{n-2})&=\left(\frac{3+\rho}{2}\right)^{-s}\left(\frac{\rho+1}{2}\right)^{2s-1}\frac{g(\frac{2r}{\rho+1},
  2\pi\theta,\pi\varphi_{1},\ldots,\pi\varphi_{n-2})}{({r^{2}+({\frac{\rho+1}{2}})^2|\texttt{{\rm{x}}}|^{2}-
  r(\rho+1)|\texttt{{\rm{x}}}|\cos\pi\varphi_{1}})^{\frac{n}{2}}}
  \\
  &\quad \times
  {\sin^{n-2}{(\pi\varphi_{1})}\sin^{n-3}{(\pi\varphi_{2})}\cdots\sin{(\pi\varphi_{n-2})}}.
 \end{aligned}
\end{equation}

We set the uniform grid $\{\rho_{i}\}_{i=1}^{N}$ for $\rho$, $\{\theta_{j}\}_{j=1}^{M}$ for $\theta$,
$\{(\varphi_{m})_{k_{m}}\}_{{k_{m}=1}}^{M_{m}}$ for $\varphi_{m}$, $m=1,2,\ldots, n-2$, and $\{t_{k}\}_{k=1}^{L}$
for $t$. Here $\rho_{i}=ih_\rho$, $\theta_{j}=jh_{\theta}$, $(\varphi_{m})_{k_{m}}=k_{m}h_{m}$, and $t_{k}=kh_{t}$
with $h_{\rho}=\frac{1}{N}$, $h_{\theta}=\frac{1}{M}$, $h_{m}=\frac{1}{M_{m}}$, and $h_{t}=\frac{1}{L}$. We also
define the interpolation operator by recursive formula,
\begin{flalign}\label{intepolation operator}
 \begin{aligned}
 &I_{[\rho_{i-1},\rho_{i}]}v(\rho,\theta,\varphi_{1},\ldots, \varphi_{n-2})
 \\
& =\frac{1}{h_{\rho}}\left[(\rho_{i}-\rho)v(\rho_{i-1},\theta,\varphi_{1},\ldots, \varphi_{n-2})+(\rho-
\rho_{i-1})v(\rho_{i},\theta,\varphi_{1},\ldots, \varphi_{n-2})\right],
 \\
 &I_{[\rho_{i-1},\rho_{i}]\times[\theta_{j-1},\theta_{j}]}v(\rho,\theta,\varphi_{1},\ldots, \varphi_{n-2})=I_{[\theta_{j-1},\theta_{j}]}I_{[\rho_{i-1},\rho_{i}]}v(\rho,\theta,\varphi_{1},\ldots, \varphi_{n-2}),
 \\
 &\ldots
 \\
 &I_{[\rho_{i-1},\rho_{i}]\times[\theta_{j-1},\theta_{j}]\times[(\varphi_{1})_{k_{1}-1},(\varphi_{1})_{k_{1}}]\ldots
 \times[(\varphi_{n-2})_{k_{n-2}-1},(\varphi_{n-2})_{k_{n-2}}]}v(\rho,\theta,\varphi_{1},\ldots, \varphi_{n-2})
 \\
 &=I_{[(\varphi_{n-2})_{k_{n-2}-1},(\varphi_{n-2})_{k_{n-2}}]}I_{[\rho_{i-1},\rho_{i}]\times[\theta_{j-1},\theta_{j}]\ldots
 \times[(\varphi_{n-3})_{k_{n-3}-1},(\varphi_{n-3})_{k_{n-3}}]}
 v(\rho,\theta,\varphi_{1},\ldots, \varphi_{n-2}).
 \end{aligned}
\end{flalign}
Then utilizing the interpolation operator to approximate $\omega_{1}(\texttt{{\rm{x}}},\rho,\theta,\varphi_{1},\ldots,
\varphi_{n-2})$ on each interval yields
\begin{equation}
\begin{aligned}
  I_{1}(\texttt{{\rm{x}}})&\approx \pi^{n-1}\sum_{i=1}^{N}\sum_{j=1}^{M}\sum_{k_{1}=1}^{M_{1}}\ldots\sum_{k_{n-2}=1}^{M_{n-2}}
  \int_{(\varphi_{n-2})_{k_{n-2}-1}}^{(\varphi_{n-2})_{k_{n-2}}}
  \ldots\int_{(\varphi_1)_{k_{1}-1}}^{(\varphi_1)_{k_{1}}}\int_{\theta_{j-1}}^{\theta_{j}}\int_{\rho_{i-1}}^{\rho_{i}}
  \left(\frac{\rho}{2}\right)^{2s-1}
  \\
  &\times I_{[\rho_{i-1},\rho_{i}]\times[\theta_{j-1},\theta_{j}]\ldots\times[((\varphi_{n-2})_{k_{n-2}-1},(\varphi_{n-2})_{k_{n-2}}]}
  [\omega_{1}(\texttt{{\rm{x}}},\rho,\theta,\varphi_{1},\ldots, \varphi_{n-2})]{\rm d}\rho\,{\rm d}\theta\,
  {\rm d}\varphi_{1}\cdots{\rm d}\varphi_{n-2}
  \\
  &=\left(\frac{\pi}{2}\right)^{n-1}\frac{h_{\theta}\prod_{m=1}^{n-2}h_{m}}{h_{\rho}}\sum_{i=1}^{N}\sum_{j=1}^{M}
  \sum_{k_{1}=1}^{M_{1}}\ldots\sum_{k_{n-2}=1}^{M_{n-2}}
  \Bigg\{\sum_{j'=j-1}^{j}\sum_{k_{1}'=k_{1}-1}^{k_{1}}\ldots\sum_{k_{n-2}'=k_{n-2}-1}^{k_{n-2}}
  \\
  &[A_1(i)\omega_{1}(\texttt{{\rm{x}}},\rho_{i-1},\theta_{j'},(\varphi_1)_{k'_{1}},\ldots, (\varphi_{n-2})_{k'_{n-2}})+A_2(i)\omega_{1}(\texttt{{\rm{x}}},\rho_{i},\theta_{j'},(\varphi_1)_{k'_{1}},\ldots,
  (\varphi_{n-2})_{k'_{n-2}})]\Bigg\}
  \\
  &{\triangleq} I_{1}^{h}(\texttt{{\rm{x}}}),
\end{aligned}
\end{equation}
where
\begin{equation}
 \left\{
 \begin{aligned}
  A_{1}(i)&=2^{1-2s}\left[\frac{\rho_{i}}{2s}(\rho_{i}^{2s}-\rho_{i-1}^{2s})
  -\frac{1}{2s+1}(\rho_{i}^{2s+1}-\rho_{i-1}^{2s+1})\right],
  \\
  A_{2}(i)&=2^{1-2s}\left[\frac{1}{2s+1}(\rho_{i}^{2s+1}-\rho_{i-1}^{2s+1})
+\frac{\rho_{i-1}}{2s}(\rho_{i}^{2s}-\rho_{i-1}^{2s})\right].
\\
 \end{aligned}
 \right.
\end{equation}

\noindent Similarly, $I_{2}(\texttt{{\rm{x}}})$ can be approximated as follows,
\begin{equation}
\begin{aligned}
 I_{2}(\texttt{{\rm{x}}})&\approx \pi^{n-1}\sum_{i=1}^{N}\sum_{j=1}^{M}\sum_{k_{1}=1}^{M_{1}}\ldots\sum_{k_{n-2}=1}^{M_{n-2}}
 \int_{(\varphi_{n-2})_{k_{n-2}-1}}^{(\varphi_{n-2})_{k_{n-2}}}
 \ldots\int_{(\varphi_1)_{k_{1}-1}}^{(\varphi_1)_{k_{1}}}\int_{\theta_{j-1}}^{\theta_{j}}
 \int_{\rho_{i-1}}^{\rho_{i}}\left(\frac{1-\rho}{2}\right)^{-s}
 \\
  &\times I_{[\rho_{i-1},\rho_{i}]\times[\theta_{j-1},\theta_{j}]\ldots\times[(\varphi_{n-2})_{k_{n-2}-1},(\varphi_{n-2})_{k_{n-2}}]}
  [\omega_{2}(\texttt{{\rm{x}}},\rho,\theta,\varphi_{1},\ldots, \varphi_{n-2})]{\rm d}\rho\,{\rm d}\theta\,
  {\rm d}\varphi_{1}\cdots{\rm d}\varphi_{n-2}
  \\
  &=\left(\frac{\pi}{2}\right)^{n-1}\frac{h_{\theta}\prod_{m=1}^{n-2}h_{m}}{h_{\rho}}\sum_{i=1}^{N}\sum_{j=1}^{M}
  \sum_{k_{1}=1}^{M_{1}}\ldots\sum_{k_{n-2}=1}^{M_{n-2}}
  \Bigg\{\sum_{j'=j-1}^{j}\sum_{k_{1}'=k_{1}-1}^{k_{1}}\ldots\sum_{k_{n-2}'=k_{n-2}-1}^{k_{n-2}}
  \\
  &[B_1(i)\omega_{2}(\texttt{{\rm{x}}},\rho_{i-1},\theta_{j'},(\varphi_{1})_{k'_{1}},\ldots, (\varphi_{n-2})_{k'_{n-2}})
  +B_2(i)\omega_{2}(\texttt{{\rm{x}}},\rho_{i},\theta_{j'},(\varphi_{1})_{k'_{1}},\ldots, (\varphi_{n-2})_{k'_{n-2}})]\Bigg\}
  \\
 &{\triangleq} I_{2}^{h}(x),
\end{aligned}
\end{equation}
where
\begin{equation}\label{coefficient of B}
 \left\{
 \begin{aligned}
 B_{1}(i)=2^s \bigg\{\frac{1-\rho_{i}}{1-s}&\big[(1-\rho_{i})^{1-s}-(1-\rho_{i-1})^{1-s}\big]
 \\
 &+\frac{1}{2-s}\big[(1-\rho_{i-1})^{2-s}-(1-\rho_{i})^{2-s}\big]\bigg\},
 \\
 B_{2}(i)=2^s \bigg\{\frac{1}{2-s}&\big[(1-\rho_{i})^{2-s}-(1-\rho_{i-1})^{2-s}\big]
 \\
 &+\frac{1-\rho_{i-1}}{1-s}\big[(1-\rho_{i})^{1-s}-(1-\rho_{i-1})^{1-s}\big]\bigg\}.
 \end{aligned}
 \right.
\end{equation}
Then we derive {Scheme I for the} approximation of the solution $u(\texttt{{\rm{x}}})$ when $s\in(0,\frac{1}{2})$,
\begin{equation}\label{righthandside1}
 u(\texttt{{\rm{x}}})\approx c(n,s)(r^2-|\texttt{{\rm{x}}}|^2)^{s}r^{n-2s}\left[I_{1}^{h}(\texttt{{\rm{x}}})+I_{2}^{h}(\texttt{{\rm{x}}})\right]
 \triangleq u_{h}(\texttt{{\rm{x}}}).
\end{equation}

\noindent When $s\in[\frac{1}{2},1)$, Scheme I is given as follows,
\begin{equation}\label{righthandside2}
 \begin{aligned}
 u(\texttt{{\rm{x}}}) &\approx {\left(\frac{\pi}{2}\right)^{n-1}}\frac{h_{\theta}\prod_{m=1}^{n-2}h_{m}}{h_{\rho}}{2^{-s}}\alpha(n,s)(r^2-
 |\texttt{{\rm{x}}}|^2)^{s}r^{n-2s} \sum_{i=1}^{N}\sum_{j=1}^{M}\sum_{k_{1}=1}^{M_{1}}\ldots\sum_{k_{n-2}=1}^{M_{n-2}}
 \\
 &\Bigg\{\sum_{j'=j-1}^{j}\sum_{k_{1}'=k_{1}-1}^{k_{1}}\ldots\sum_{k_{n-2}'=k_{n-2}-1}^{k_{n-2}}
 B_{1}(i)\big[\omega_{2}(\texttt{{\rm{x}}},2\rho_{i-1}-1,\theta_{j'},(\varphi_{1})_{k_{1}'},\ldots, (\varphi_{n-2})_{k_{n-2}'})
 \big]
 \\
 &+B_{2}(i)\big[\omega_{2}(\texttt{{\rm{x}}},2\rho_{i}-1,\theta_{j'},(\varphi_{1})_{k_{1}'},\ldots, (\varphi_{n-2})_{k_{n-2}'})\big]\Bigg\},
 \end{aligned}
\end{equation}
where $B_{1}(i)$ and $B_{2}(i)$ are defined in equation {(\ref{coefficient of B})}.

\begin{remark}
	{The complexity of the quadrature rule is $\mathcal{O}(NM\prod_{i=1}^{n-2}M_{i})$. Specially, when
	$h=h_{\rho}=h_{\theta}=h_{1}=\ldots=h_{n-2}=\frac{1}{N}$, the complexity is $\mathcal{O}(N^{n})$.}
\end{remark}

When the equation has nonconstant source term with homogeneous boundary value, i.e. $f(\texttt{{\rm{x}}})\neq0$ and
$g(\texttt{{\rm{x}}})=0$ in equation (\ref{eq:possionball}), {we take the two dimensional case as an example.
It follows from
\begin{equation}
\begin{aligned}
  u(\texttt{{\rm{x}}})&=\int_{\mathbb{B}_{r}}G(\texttt{{\rm{x}}},\texttt{{\rm{y}}})f(\texttt{{\rm{y}}}){\rm d}\texttt{{\rm{y}}}
=\kappa(n,s)r^{n-2s}(r^2-|\texttt{{\rm{x}}}|^2)^{s}
\\
&\times\int_{0}^{1}\left(\int_{\mathbb{B}_{r}\setminus{S_{h}}}+\int_{S_{h}}\right)(r^2-|\texttt{{\rm{y}}}|^2)^{s}
[(r^2-|\texttt{{\rm{x}}}|^2)(r^2-{|\texttt{{\rm{y}}}|^2})t+r^2|\texttt{{\rm{x}}}
-\texttt{{\rm{y}}}|^2)]^{{-1}}t^{s-1}f(\texttt{{\rm{y}}}){\rm d}t{\rm d}\texttt{{\rm{y}}}
\end{aligned}
\end{equation}
in two dimensions that for $\texttt{{\rm{x}}} = (|\texttt{{\rm{x}}}|,0)\neq(0,0)$
\begin{equation}
 \begin{aligned}
  u(\texttt{{\rm{x}}})&\approx\kappa(n,s)r^{n-2s}(r^2-|\texttt{{\rm{x}}}|^2)^{s}\int_{0}^{1}t^{s-1}
  \Big(\int_{\phi}^{2\pi-\phi}\int_{0}^{r}+\int_{-\phi}^{\phi}\big(\int_{0}^{\frac{|\texttt{{\rm{x}}}|-h}{\cos\theta}}
  +\int_{\frac{|\texttt{{\rm{x}}}|+h}{\cos\theta}}^{r}\big)\Big)\omega_{31}(\rho,\theta,t){\rm d}t{\rm d}\rho{\rm d}\theta
  \\
  &+\int_{0}^{1}t^{s-1}\Big(\int_{0}^{\phi}\int_{0}^{\frac{2h}{\cos\theta}}\omega_{32}(\rho,\theta,t){\rm d}\rho{\rm d}\theta+\int_{-\phi}^{0}\int_{0}^{\frac{2h}{\cos\theta}}\omega_{33}(\rho,\theta,t){\rm d}\rho{\rm d}\theta\Big){\rm d}t.
 \end{aligned}
\end{equation}
Here {$S_{h}$ is a square with side length $h$ centered at \texttt{{\rm{x}}}, $\phi=\arctan\frac{h}{|\texttt{{\rm{x}}}|-h}$ and $\omega_{31},\omega_{32},\omega_{33}$ are
integrand in the corresponding fields of Figure \ref{fig2}},
\begin{equation}
 \begin{aligned}
\omega_{31}(\rho,\theta,t)&=(r^2-\rho^2)^{2s}\frac{f(\rho\cos\theta,\rho\sin\theta)}{(r^2-|\texttt{{\rm{x}}}|^2)
(r^2-\rho^2)t+r^2[(\rho\cos\theta-|\texttt{{\rm{x}}}|)^2+(\rho\sin\theta)^2]},
\\
\omega_{32}(\rho,\theta,t)&=(r^2-\rho^2)^{2s}\frac{f(\rho\cos\theta+|\texttt{{\rm{x}}}|-h,\rho\sin\theta+h)}
{(r^2-|\texttt{{\rm{x}}}|^2)(r^2-\rho^2)t+r^2[(\rho\cos\theta-h)^2+(\rho\sin\theta+h)^2]},
\\
\omega_{33}(\rho,\theta,t)&=(r^2-\rho^2)^{2s}\frac{f(\rho\cos\theta+|\texttt{{\rm{x}}}|-h,\rho\sin\theta-h)}
{(r^2-|\texttt{{\rm{x}}}|^2)(r^2-\rho^2)t+r^2[({\rho}\cos\theta-h)^2+(\rho\sin\theta-h)^2]}.
\\
 \end{aligned}
\end{equation}
\begin{figure}[!htbp]
    \centering
    \includegraphics[height=6cm,width=6cm]{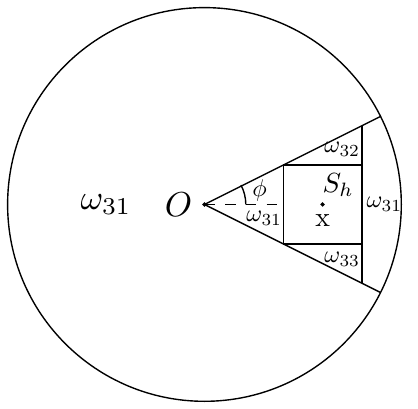}
 \captionsetup{font={footnotesize}}
 \caption{Two dimensional case.}\label{fig2}

\end{figure}	
Then we have
\begin{equation}\label{boundarycondition1}
 \begin{aligned}
  u_{h}(\texttt{{\rm{x}}})&=\kappa(2,s)r^{2-2s}(r^2-|\texttt{{\rm{x}}}|^2)^{s}\sum_{i=1}^{N}\sum_{j=1}^{M}\sum_{k=1}^{L}
  C(k)\Bigg[(2\pi-2\phi)r\omega_{31}(r\rho_{i-\frac{1}{2}},(2\pi-2\phi)\theta_{j-\frac{1}{2}},t_{k-\frac{1}{2}})
  \\
  &+\frac{2\phi(|\texttt{{\rm{x}}}|-h)}{\cos(2\phi\theta_{j-\frac{1}{2}}-\phi)}\omega_{31}\left(\frac{(|\texttt{{\rm{x}}}|-h)
  \rho_{i-\frac{1}{2}}}{\cos(2\phi\theta_{j-\frac{1}{2}}-\phi)},2\phi\theta_{j-\frac{1}{2}-\phi},t_{k-\frac{1}{2}}\right)+
  2\phi\Big(r-\frac{|\texttt{{\rm{x}}}|+h}{\cos(2\phi{\theta_{j-\frac{1}{2}}}-\phi)}\Big)
  \\
  &\times\omega_{31}\left(\Big(r-\frac{|\texttt{{\rm{x}}}|+h}{\cos(2\phi{\theta_{j-\frac{1}{2}}}-\phi)}\Big)\rho_{i-
  \frac{1}{2}}+\frac{|\texttt{{\rm{x}}}|+h}{\cos(2\phi{\theta_{j-\frac{1}{2}}}-\phi)},2\phi\theta_{\frac{1}{2}}-\phi,t_{k-\frac{1}{2}}\right)
  \\
  &+\frac{2h\phi}{\cos(\phi\theta_{j-\frac{1}{2}})}\omega_{32}\left(\frac{2h\rho_{i-\frac{1}{2}}}{\cos(\phi\theta_{j-
  \frac{1}{2}})},\phi\theta_{j-\frac{1}{2}},t_{k-\frac{1}{2}}\right)
  \\
  &+\frac{2h\phi}{\cos(\phi\theta_{j-\frac{1}{2}}-\phi)}\omega_{33}\left(\frac{2h\rho_{i-\frac{1}{2}}}{\cos(\phi\theta_{j-
  \frac{1}{2}})},\phi\theta_{j-\frac{1}{2}}-\phi,t_{k-\frac{1}{2}}\right)
\Bigg],
 \end{aligned}
 \end{equation}
where $C(k)=\frac{1}{s}(t_{k}^{s}-t_{k-1}^{s})$.

When $\texttt{{\rm{x}}}=(0,0)$, we have
\begin{equation}\label{boundarycondition2}
 u_{h}(\texttt{{\rm{x}}})=2\pi(r-h)\kappa(2,s)\sum_{i=1}^{N}\sum_{j=1}^{M}\sum_{k=1}^{L}C(k)\omega_{4}((r-h)\rho_{i-
 \frac{1}{2}},2\pi\theta_{j-\frac{1}{2}},t_{k-\frac{1}{2}}),
\end{equation}
where
\begin{equation}
  \omega_{4}(\rho,\theta,t)=(r^2-\rho^2)^{2s}\frac{f(\rho,\theta)}{(r^2-\rho^2)t+\rho^2}.
\end{equation}

For higher-dimensional case, the numerical scheme can be similarly derived so is omitted here.
}

\subsection{Error estimates of the quadrature rules}

In this subsection, we provide error estimates for our numerical method in discretizing the representation
formula of the solution $u(\texttt{{\rm{x}}})$. Firstly we have the following lemma which can be readily derived.
\begin{lemma}\label{lemmaforerror}
 Let I be the interpolation operator defined in (\ref{intepolation operator}) on the domain $\mathbb{K}=[0,\ell_{1}]
 \times[0, \ell_{2}]\times\ldots \times[0,\ell_{n}]$. Then for any functions
$v\in C_{b}^{2}(\mathbb{K})$, we have the error estimate,
 \begin{equation}
  \|v-Iv\|_{L^{\infty}}\leq \frac{1}{8}\left(\sum_{i=1}^{n}\ell_{i}^{2}\left\|\frac{\partial^{2}v}{\partial{x}_{i}^{2}}
  \right\|_{L^{\infty}}\right).
 \end{equation}
\end{lemma}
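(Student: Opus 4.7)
The plan is to reduce the multivariate estimate to the standard one-dimensional linear interpolation bound via a telescoping identity, exploiting that the operator $I$ defined by the above recursion is exactly the tensor product of one-dimensional piecewise linear interpolants.

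First I would establish the one-variable estimate: for $w \in C_b^2([a,b])$,
\[
\|w - I_{[a,b]}w\|_{L^{\infty}[a,b]} \le \tfrac{(b-a)^2}{8}\,\|w''\|_{L^{\infty}[a,b]}.
\]
This is immediate from the representation $w(t) - I_{[a,b]}w(t) = \tfrac12(t-a)(t-b)w''(\xi_t)$ (obtained either from Rolle's theorem applied to an auxiliary function or from the Peano kernel) combined with $\max_{t\in[a,b]}|(t-a)(t-b)| = (b-a)^2/4$.

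Next, let $I_j$ denote the one-dimensional piecewise linear interpolation acting only in the $j$-th variable on $[0,\ell_j]$, so that unwinding the recursion gives $I = I_1 I_2 \cdots I_n$. I would then use the telescoping identity
\[
v - Iv = \sum_{j=1}^{n} (I_1 I_2 \cdots I_{j-1})\,(v - I_j v),
\]
with the empty product for $j=1$ interpreted as the identity. Since the hat-function weights appearing in the recursion are nonnegative and sum to one, each $I_k$ is a convex-combination operator and hence a contraction in $L^{\infty}(\mathbb{K})$; the same then holds for every partial composition $I_1 \cdots I_{j-1}$. Applying the one-variable bound pointwise in the remaining $n-1$ variables yields $\|v - I_j v\|_{L^{\infty}(\mathbb{K})} \le \tfrac{\ell_j^2}{8}\|\partial_{x_j}^2 v\|_{L^{\infty}(\mathbb{K})}$, and summing the telescoped pieces with the triangle inequality delivers the claimed bound with the constant $1/8$.

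The only item meriting care is the $L^{\infty}$-contractivity of the partial interpolants, so that no factor worse than $1$ is introduced as one passes from dimension $j-1$ to dimension $j$ and the constant $1/8$ is preserved through the dimension induction; this is immediate from the explicit hat-function formula but must be recorded. Beyond this bookkeeping the argument is routine and I foresee no genuine analytical obstacle.
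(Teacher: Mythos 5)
Your argument is correct and is exactly the standard tensor-product/telescoping derivation (one-dimensional bound $\tfrac{(b-a)^2}{8}\|w''\|_{L^\infty}$, the identity $v-Iv=\sum_{j}I_1\cdots I_{j-1}(v-I_jv)$, and $L^\infty$-contractivity of each partial interpolant) that the paper implicitly has in mind when it states the lemma ``can be readily derived'' without proof. Nothing is missing; the commutativity of the univariate interpolants in distinct variables makes the ordering in the recursive definition immaterial, so your decomposition applies verbatim.
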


 In $n$ $(n\geq 2)$ dimensional cases, suppose $g(x)\in C_{b}^{2}(\mathbb{R}^{n}\backslash \mathbb{B}_{r})$. Then when
 $s\in (0,\frac{1}{2})$, we obtain
\begin{equation}
 |u(\texttt{{\rm{x}}})-u_{h}(\texttt{{\rm{x}}})|\leq c(n,s)(r^2-|\texttt{{\rm{x}}}|^{2})^{s}r^{n-2s}[|I_{1}(\texttt{{\rm{x}}})-I_{1}^{h}(\texttt{{\rm{x}}})|+|I_{2}(\texttt{{\rm{x}}})-
 I_{2}^{h}(\texttt{{\rm{x}}})|].
\end{equation}
From Lemma \ref{lemmaforerror} and the fact $\texttt{{\rm{x}}}\neq \texttt{{\rm{y}}}$, we derive
\begin{equation}
 \begin{aligned}
  |I_{1}(\texttt{{\rm{x}}})-I_{1}^{h}(\texttt{{\rm{x}}})|&=\Bigg|\pi^{n-1}\sum_{i=1}^{N}\sum_{j=1}^{M}\ldots
  \sum_{k_{n-2}=1}^{M_{n-2}}\int_{(\varphi_{n-2})_{k_{n-2}-1}}^{(\varphi_{n-2})_{k_{n-2}}}\ldots\int_{\theta_{j-1}}^{\theta_{j}}
  \int_{\rho_{i-1}}^{\rho_{i}}\Big[\omega_{1}(\texttt{{\rm{x}}},\rho,\theta,\varphi_{1},\ldots, \varphi_{n-2})
  \\
  &-I_{[\rho_{i-1},\rho_{i}]\times[\theta_{j-1},\theta_{j}]\ldots\times[(\varphi_{n-2})_{k_{n-2}-1},(\varphi_{n-2})_{k_{n-2}}]}
  \omega_{1}(\texttt{{\rm{x}}},\rho,\theta,\varphi_{1},\ldots, \varphi_{n-2})\Big]
  \\
  &\times\left(\frac{\rho}{2}\right)^{2s-1} {\rm d}\rho\,{\rm d}\theta\,{\rm d}\varphi_{1}\cdots{\rm d}\varphi_{n-2}\Bigg|
  \\
  &\leq \frac{\pi}{s2^{2s+3}}\left({h_{\rho}}^{2}\left\|\frac{\partial^{2}\omega_{1}}{\partial{\rho}^{2}}\right\|_{L^{\infty}}
  +{h_{\theta}^2}\left\|\frac{\partial^{2}\omega_{1}}{\partial{\theta}^{2}}\right\|_{L^{\infty}}+\sum_{r=1}^{n-2}{h_{r}^2}
  \left\|\frac{\partial^{2}\omega_{1}}{\partial{\varphi_{r}^{2}}}\right\|_{L^{\infty}}\right)
  \\
  &\leq C\left(h_{\rho}^{2}+h_{\theta}^{2}+\sum_{r=1}^{n-2}h_{r}^2\right).
 \end{aligned}
\end{equation}
Similarly,
\begin{equation}
  |I_{2}(\texttt{{\rm{x}}})-I_{2}^{h}(\texttt{{\rm{x}}})| \leq C\left(h_{\rho}^{2}+h_{\theta}^{2}+\sum_{r=1}^{n-2}h_{r}^2\right).
\end{equation}
When $s\in [\frac{1}{2},1)$,  the truncated errors is still $\mathcal{O}(h_{\rho}^{2}+h_{\theta}^{2}+\sum_{r=1}^{n-2}h_{r}^2)$.
Thus we have the theorem as follows.
\begin{theorem}
 Let $r>0$, $g \in C_{b}^{2}(\mathbb{R}^{n}\backslash \mathbb{B}_{r})$ and $s\in(0,1)$. Then it holds that
 \begin{equation}
  |u(\texttt{{\rm{x}}})-u_{h}(\texttt{{\rm{x}}})| \leq C\left(h_{\rho}^{2}+h_{\theta}^{2}+\sum_{r=1}^{n-2}h_{r}^2\right),
 \end{equation}
where C is a positive constant.
\end{theorem}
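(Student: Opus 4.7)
The plan is to reduce the claim to two interpolation-error estimates for the smooth kernels $\omega_1$ and $\omega_2$ introduced in Section 2.2, combined with the fact that the singular weights $\rho^{2s-1}$ and $((1-\rho)/2)^{-s}$ are integrable on $[0,1]$ for every $s\in(0,1)$. Treating the cases $s\in(0,1/2)$ and $s\in[1/2,1)$ separately mirrors the construction of the scheme.

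First I would handle $s\in(0,1/2)$. Using the triangle inequality on the representation $u(\texttt{x})\approx c(n,s)(r^2-|\texttt{x}|^2)^s r^{n-2s}[I_1^h+I_2^h]$ gives
\begin{equation*}
|u(\texttt{x})-u_h(\texttt{x})|\le c(n,s)(r^2-|\texttt{x}|^2)^{s}r^{n-2s}\bigl[|I_1(\texttt{x})-I_1^h(\texttt{x})|+|I_2(\texttt{x})-I_2^h(\texttt{x})|\bigr].
\end{equation*}
On each elementary cell I apply Lemma~\ref{lemmaforerror} to the smooth factor $\omega_1$ inside $I_1$ (respectively $\omega_2$ inside $I_2$), pull the $L^\infty$ bound of its second partials out of the integral, and integrate the remaining singular weight. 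For $I_1$, $\int_0^1(\rho/2)^{2s-1}\,d\rho=2^{1-2s}/(2s)$, and for $I_2$, $\int_0^1((1-\rho)/2)^{-s}\,d\rho=2^{s}/(1-s)$; both are finite and independent of the mesh, which directly yields
\begin{equation*}
|I_j(\texttt{x})-I_j^h(\texttt{x})|\le C\Bigl(h_\rho^2+h_\theta^2+\sum_{r=1}^{n-2}h_r^2\Bigr),\qquad j=1,2.
\end{equation*}

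For $s\in[1/2,1)$, there is no singularity at $\rho=0$, so a single affine change of variable $\rho\mapsto 2\rho-1$ puts the integrand in the form $((1-\rho)/2)^{-s}\,\omega_2$, and the same argument using $\int_0^1((1-\rho)/2)^{-s}\,d\rho<\infty$ applies. The constant $C$ collects $c(n,s)$, $(r^2-|\texttt{x}|^2)^s r^{n-2s}$, and the above $L^\infty$-bounds of the mixed second partials of $\omega_1,\omega_2$.

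The main technical obstacle is to show that $\omega_1$ and $\omega_2$ really lie in $C_b^2$ of the $n$-dimensional unit cube in $(\rho,\theta,\varphi_1,\ldots,\varphi_{n-2})$, with second derivatives bounded by a constant depending only on $r$, $|\texttt{x}|$, $s$, and $\|g\|_{C_b^2}$. This requires (i) that $g\in C_b^2(\mathbb{R}^n\setminus\mathbb{B}_r)$, so that after the inversion $\rho'=r/\rho$ the composition $g(r/\rho,\theta,\varphi_1,\ldots,\varphi_{n-2})$ has bounded derivatives away from $\rho=0$, and (ii) that the denominator $r^2+(\rho/2)^2|\texttt{x}|^2-r\rho|\texttt{x}|\cos(\pi\varphi_1)$ is bounded below by $(r-|\texttt{x}|)^2>0$ uniformly in the angular variables since $\texttt{x}\in\mathbb{B}_r$, so taking two derivatives of the $(\,\cdot\,)^{-n/2}$ factor is harmless. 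The weights $[1-(\rho/2)^2]^{-s}$ and $((3+\rho)/2)^{-s}$ are smooth in the interior and bounded, as are $\sin^{n-2-k}(\pi\varphi_k)$. Once this uniform $C^2$ bound is established, the remainder of the argument is the routine integration of the singular endpoint weight described above, and the theorem follows.
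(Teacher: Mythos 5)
Your proposal is correct and follows essentially the same route as the paper: a triangle inequality splitting into the $I_1$ and $I_2$ errors, application of Lemma \ref{lemmaforerror} to the smooth factors $\omega_1,\omega_2$ on each cell, and integration of the integrable singular weights $(\rho/2)^{2s-1}$ and $((1-\rho)/2)^{-s}$ to obtain mesh-independent constants. The only difference is that you spell out the uniform $C^2$-boundedness of $\omega_1,\omega_2$ (via the lower bound on the denominator and $g\in C_b^2$), which the paper compresses into the remark ``from Lemma \ref{lemmaforerror} and the fact $\texttt{{\rm{x}}}\neq\texttt{{\rm{y}}}$.''
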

{When $f(\texttt{{\rm{x}}})\in C_{b}^{2}(\mathbb{B}_{r}\backslash S_{h})$, it is easily to derive the error estimates for
equation (\ref{boundarycondition1}), $\mathcal{O}(h^{2s}+h_{t}+h_{\rho}^{2}+h_{\theta}^{2})$.

  Obviously, the above quadrature for $n$-dimensional fractional Laplacian is often difficult to be implemented in computer
  simulations if $n>3$. So the Monte Carlo method is very likely the best choice for numerical experiments. In the following,
  we introduce modified walk-on-sphere method, which is one of Monte Carlo methods.}

\section{Modified walk-on-sphere method} \label{sec:mod-walk-on-spheres}

In this section, we utilize the modified  walk-on-sphere  method to solve equation (\ref{eq:possionproblem})
with $\Omega$ being an arbitrary domain instead of a ball. Assume $f(\texttt{{\rm{x}}})$ and $g(\texttt{{\rm{x}}})$
satisfy the condition in Theorem \ref{integralrepre}. Then it is known from Section 2 that the solution to (\ref{eq:possionproblem}),
$u(\texttt{{\rm{x}}}_{0})$, $\texttt{{\rm{x}}}_{0} \in \Omega$, has the integral representation
\begin{equation}\label{3integralpre}
  u(\texttt{{\rm{x}}}_{0})=\int_{\mathbb{R}^{n} \backslash \mathbb{B}_{r_{0}}(\texttt{{\rm{x}}}_{0})}p_{1,r_{0}}(\texttt{{\rm{x}}},\texttt{{\rm{x}}}_{0})u(\texttt{{\rm{x}}}){\rm d}\texttt{{\rm{x}}}
 +a(\texttt{{\rm{x}}}_{0})\int_{\mathbb{B}_{r_{0}}(\texttt{{\rm{x}}}_{0})}p_{2,r_{0}}(\texttt{{\rm{y}}},\texttt{{\rm{x}}}_{0})f(\texttt{{\rm{y}}}){\rm d}\texttt{{\rm{y}}},
\end{equation}
where $\mathbb{B}_{r_{0}}(\texttt{{\rm{x}}}_{0})$ is the largest ball contained in $\Omega$ with radius
$r_{0}=\sup\{r:\mathbb{B}(\texttt{{\rm{x}}}_{0},r)\subset \Omega\}$, centered at $\texttt{{\rm{x}}}_{0}$,
\begin{equation}
\left\{
 \begin{aligned}
  &p_{1,r_{0}}(\texttt{{\rm{x}}},\texttt{{\rm{x}}}_{0})=P_{r_{0}}(0,\texttt{{\rm{x}}}-\texttt{{\rm{x}}}_{0}),
  \\
  &p_{2,r_{0}}(\texttt{{\rm{y}}},\texttt{{\rm{x}}}_{0})=\frac{1}{a(\texttt{{\rm{x}}}_{0})}G(0,\texttt{{\rm{y}}}-\texttt{{\rm{x}}}_{0}),
  \\
 \end{aligned}
 \right.
\end{equation}
and
\begin{equation}
\left\{
 \begin{aligned}
  a(\texttt{{\rm{x}}}_{0})&=\int_{\mathbb{B}_{r_{0}}(\texttt{{\rm{x}}}_{0})}G(0,\texttt{{\rm{y}}}-
  \texttt{{\rm{x}}}_{0}){\rm d}\texttt{{\rm{y}}}
  \\
  &=\kappa(n,s)\int_{\mathbb{B}_{r_{0}}(\texttt{{\rm{x}}}_{0})}|\texttt{{\rm{y}}}-\texttt{{\rm{x}}}_{0}|^{2s-n}
  \int_{0}^{\frac{r_{0}^{2}-|\texttt{{\rm{y}}}-\texttt{{\rm{x}}}_{0}|^2}{|\texttt{{\rm{y}}}-\texttt{{\rm{x}}}_{0}|^2}}
  \frac{t^{s-1}}{(t+1)^{\frac{n}{2}}}{\rm d}t{\rm d}\texttt{{\rm{y}}}
  \\
&=\omega_{n-1}\kappa(n,s)\int_{0}^{r_{0}}{\rho}^{2s-1}\int_{0}^{\frac{r_{0}^{2}-{\rho}^2}{{\rho}^2}}
\frac{t^{s-1}}{(t+1)^{\frac{n}{2}}}{\rm d}t{\rm d}{\rho}
\\
&=\omega_{n-1}\kappa(n,s)\int_{0}^{\infty}\frac{t^{s-1}}{(t+1)^{\frac{n}{2}}}\int_{0}^{
\frac{r_{0}}{(t+1)^{\frac{1}{2}}}}{\rho}^{2s-1}{\rm d}{\rho}{\rm d}t
\\
&=\kappa(n,s)B(s,\frac{n}{2})\frac{\omega_{n-1}}{2s}r_{0}^{2s}.
 \end{aligned}
 \right.
\end{equation}
Here $a(\texttt{{\rm{x}}}_{0})$ is the normalizing constant such that $\int_{\mathbb{B}_{r_{0}}(\texttt{{\rm{x}}}_{0})}p_{2,r_{0}}(\texttt{{\rm{y}}},\texttt{{\rm{x}}}_{0}){\rm d}\texttt{{\rm{y}}}=1.$
And $\omega_{n-1}$ denotes the $(n-1)$-dimensional measure of the unit sphere {$S^{n-1}$}.
We recall from \cite{Bucur2016} that
\begin{equation}
\int_{\mathbb{R}^{n} \backslash \mathbb{B}_{r_{0}}(\texttt{{\rm{x}}}_{0})}p_{1,r_{0}}(\texttt{{\rm{x}}},
\texttt{{\rm{x}}}_{0}){\rm d}\texttt{{\rm{x}}}=1.
\end{equation}
Both $p_{1,r_{0}}(\texttt{{\rm{x}}},\texttt{{\rm{x}}}_{0})$ and $p_{2,r_{0}}(\texttt{{\rm{y}}},\texttt{{\rm{x}}}_{0})$
can be viewed as probability density function for the random variables $\texttt{{\rm{X}}}$ outside the ball
$\mathbb{B}_{r_{0}}(\texttt{{\rm{x}}}_{0})$ and $\texttt{{\rm{Y}}}$ inside the ball, respectively.
So we can suppose $\texttt{{\rm{X}}}$ and $\texttt{{\rm{Y}}}$ denote random variables outside the ball with density
$p_{1,r_{0}}(\texttt{{\rm{x}}},\texttt{{\rm{x}}}_{0})$ and inside the ball with $p_{2,r_{0}}(\texttt{{\rm{y}}},\texttt{{\rm{x}}}_{0})$,
accordingly. Then the integral representation (\ref{3integralpre}) can be rewritten as
\begin{equation}\label{3probabilitypre}
 u(\texttt{{\rm{x}}}_{0})=\mathbb{E}u(\texttt{{\rm{X}}})+a(\texttt{{\rm{x}}}_{0})\mathbb{E}f(\texttt{{\rm{Y}}}).
\end{equation}
Here $\mathbb{E}$ indicates expected operation. The first term describes a mean value with respect to $p_{1,r_{0}}(\texttt{{\rm{y}}},
\texttt{{\rm{x}}}_{0})$ outside the ball and represents the average score upon exiting the ball $\mathbb{B}_{r_{0}}(\texttt{{\rm{x}}}_{0})$.
The second term is a weighted average taken with respect to density $p_{2,r_{0}}(\texttt{{\rm{x}}},\texttt{{\rm{x}}}_{0})$ and
represents the expected contribution from sources inside the ball.

Both $p_{1,r_{0}}(\texttt{{\rm{x}}},\texttt{{\rm{x}}}_{0})$ and $p_{2,r_{0}}(\texttt{{\rm{y}}},\texttt{{\rm{x}}}_{0})$
can be used to construct transition probabilities for a Markov chain. The transition from an initial point
$\texttt{{\rm{X}}}_{0}=\texttt{{\rm{x}}}_{0}$ is performed by selecting a point $\texttt{{\rm{X}}}_{1}$ outside the ball $\mathbb{B}_{r_{0}}(\texttt{{\rm{x}}}_{0})$ with density $p_{1,r_{0}}(\texttt{{\rm{x}}},\texttt{{\rm{x}}}_{0})$ and by
generating a random variable $\texttt{{\rm{Y}}}_{1}$ with density $p_{2,r_{0}}(\texttt{{\rm{y}}},\texttt{{\rm{x}}}_{0})$.
Given the position $\texttt{{\rm{X}}}_{k}=\texttt{{\rm{x}}}_{k}$ at the $k$-th step, the transition to the $(k+1)$-th step
is carried out by choosing $\texttt{{\rm{X}}}_{k+1}$ with $p_{1,r_{k}}(\texttt{{\rm{x}}},\texttt{{\rm{x}}}_{k})$, $r_{k}=\sup \{r:\mathbb{B}(\texttt{{\rm{x}}}_{k},r)\subset \Omega\}$, outside the ball $\mathbb{B}_{r_{k}}(\texttt{{\rm{x}}}_{k})$ and by
selecting $\texttt{{\rm{Y}}}_{k+1}$ according to the density $p_{2,r_{k}}(\texttt{{\rm{y}}},\texttt{{\rm{x}}}_{k})$($\texttt{{\rm{X}}}_{k+1}$
and $\texttt{{\rm{Y}}}_{k+1}$ are independent of each other). The walk on spheres is simulated by repeating this procedure
until the particle exits the domain $\Omega$. For the point $\texttt{{\rm{X}}}_{k}$, the solution $u(\texttt{{\rm{x}}})$
must satisfy (\ref{3probabilitypre}). We obtain
\begin{equation}\label{3probabilitypre4k}
 u(\texttt{{\rm{X}}}_{k})=\mathbb{E}[u(\texttt{{\rm{X}}}_{k+1})|\texttt{{\rm{X}}}_{k}]+a(\texttt{{\rm{X}}}_{k})
 \mathbb{E}[f(\texttt{{\rm{Y}}}_{k+1})|\texttt{{\rm{X}}}_{k}].
\end{equation}
Here the conditional expectations are used as the densities of  $\texttt{{\rm{X}}}_{k+1}$ and $\texttt{{\rm{Y}}}_{k+1}$ are
determined by the position of $\texttt{{\rm{X}}}_{k}$.

Generally speaking, the connection between the solution to the fractional Laplacian Dirichlet problem and the random process
follows from the telescoping summation
\begin{equation}
 \begin{aligned}
 u(\texttt{{\rm{x}}}_{0})=\mathbb{E}u(\texttt{{\rm{X}}}_{0})&=\mathbb{E}u(\texttt{{\rm{X}}}_{l})+\mathbb{E}
 \left\{\sum_{k=0}^{l-1}[u(\texttt{{\rm{X}}}_{k})-u(\texttt{{\rm{X}}}_{k+1})]\right\}
 \\
 &=\mathbb{E}u(\texttt{{\rm{X}}}_{l})+ \mathbb{E}
\left\{\sum_{k=0}^{l-1}\big(u(\texttt{{\rm{X}}}_{k})-\mathbb{E}[u(\texttt{{\rm{X}}}_{k+1})|\texttt{{\rm{X}}}_{k}]\big)\right\},
 \end{aligned}
\end{equation}
where $\texttt{{\rm{X}}}_{0}=\texttt{{\rm{x}}}_{0}$. In the last equality, we have used the fact that
\begin{equation}
 \mathbb{E}u(\texttt{{\rm{X}}}_{k+1})=\mathbb{E}\{ \mathbb{E}[u(\texttt{{\rm{X}}}_{k+1})|\texttt{{\rm{X}}}_{k}]\}.
\end{equation}
Applying identity (\ref{3probabilitypre4k}) yields
\begin{equation}\label{3intepresolution}
 \begin{aligned}
  u(\texttt{{\rm{x}}}_{0})&=\mathbb{E}u(\texttt{{\rm{X}}}_{l})+\mathbb{E}\left\{\sum_{k=0}^{l-1}a(\texttt{{\rm{X}}}_{k})
  \mathbb{E}[f(\texttt{{\rm{Y}}}_{k+1})|\texttt{{\rm{X}}}_{k}] \right\}
  \\
  &=\mathbb{E}u(\texttt{{\rm{X}}}_{l})+\mathbb{E}\left[\sum_{k=0}^{l-1}a(\texttt{{\rm{X}}}_{k})f(\texttt{{\rm{Y}}}_{k+1})\right].
 \end{aligned}
\end{equation}
Now suppose the process jumps out of the boundary on the $l$-th step. Then all of the terms on the right-hand side of
equation (\ref{3intepresolution}) would be known and $u(\texttt{{\rm{X}}}_{l})$ can be replaced by $g(\texttt{{\rm{X}}}_{l})$.
This suggests that $u(\texttt{{\rm{x}}}_{0})$ be the mean value of the exit points plus a weighted average from internal contribution.

Monte Carlo method makes use of the preceding observation to estimate $u(\texttt{{\rm{x}}}_{0})$. According to the density $p_{1,r_{k}}(\texttt{{\rm{x}}},\texttt{{\rm{x}}}_{k})$, $\texttt{{\rm{X}}}_{k+1}$ will jump out of the ball $B_{r_{k}}(\texttt{{\rm{x}}}_{k})$
so that the particle will exit the domain in a finite number of steps. At the conclusion of each walk, we compute the random sample
\begin{equation}\label{3randomsample}
 Z_{i}=g(\texttt{{\rm{X}}}_{l}^{i})+\sum_{k=0}^{l-1}a(\texttt{{\rm{X}}}_{k}^{i})f(\texttt{{\rm{Y}}}_{k+1}^{i}),
\end{equation}
where $i$ denotes the $i$-th experiment. By identity (\ref{3randomsample}), we have
$u(\texttt{{\rm{x}}}_{0})=\mathbb{E}Z_{i}$. An estimate for the mean of $Z_{i}$ is given by the statistic
\begin{equation}
 S_{N}=\frac{1}{N}\sum_{i=1}^{N}Z_{i},
\end{equation}
where $N$ is the number of trials. By the law of large numbers,
\begin{equation}
 \lim\limits_{N \rightarrow \infty}S_{N}=u(\texttt{{\rm{x}}}_{0}).
\end{equation}
The central limit theorem gives $\mathcal{O}(1/N)$ upper bounds on the variance of
the $N$-term sum, which serves as a numerical error estimate.

\subsection{Modifying the walk-on-sphere method via approximate sampling methods}
  For the Monte Carlo sampling, we need to sample $\texttt{{\rm{X}}}_{k+1}$ and $\texttt{{\rm{Y}}}_{k+1}$ according
to their probability density functions. Given the position $\texttt{{\rm{X}}}_{k}=\texttt{{\rm{x}}}_{k}=(x_{k,1},x_{k,2},\ldots,x_{k,n})$,
we obtain { probability measure for $\texttt{{\rm{X}}}_{k+1}$},
\begin{equation}{\label{eq:pdf4x}}
 \begin{aligned}
  \mathbb{P}_{\texttt{{\rm{x}}}_{k}}(\texttt{{\rm{X}}}_{k+1}\in {\rm d}\texttt{{\rm{x}}})&=p_{1,r_{k}}(\texttt{{\rm{x}}},
  \texttt{{\rm{x}}}_{k}){\rm d}\texttt{{\rm{x}}}
  =P_{r_{k}}(0,\texttt{{\rm{x}}}-\texttt{{\rm{x}}}_{k}){\rm d}\texttt{{\rm{x}}}, \quad \texttt{{\rm{x}}}\in
  \mathbb{R}^{n}\backslash \mathbb{B}_{r_{k}}(\texttt{{\rm{x}}}_{k})\\
    &=c(n,s)\left(\frac{r_{k}^{2}}{|\texttt{{\rm{x}}}-\texttt{{\rm{x}}}_{k}|^{2}-r_{k}^{2}}\right)^{s}
    \frac{1}{|\texttt{{\rm{x}}}-\texttt{{\rm{x}}}_{k}|^{n}}{\rm d}\texttt{{\rm{x}}}.
 \end{aligned}
\end{equation}
We change variables by using the hyperspherical coordinates with radius $\rho >r_{k}$, angles
$\varphi_{1},\varphi_{2},\cdots ,\varphi_{n-2} \in [0,\pi]$, and $\theta \in [0,2\pi]$. In this case, it holds that
\begin{equation}\label{3hypercoordinate}
 \left\{
 \begin{aligned}
  &x_{1} =x_{k,1}+\rho\sin{\varphi_{1}}\sin{\varphi_{2}}\cdots\sin{\varphi_{n-2}}\sin{\theta},
  \\
  &x_{2} =x_{k,2}+\rho\sin{\varphi_{1}}\sin{\varphi_{2}}\cdots\sin{\varphi_{n-2}}\cos{\theta},
  \\
  &x_{3} =x_{k,3}+\rho\sin{\varphi_{1}}\sin{\varphi_{2}}\cdots\cos{\varphi_{n-2}},
  \\
  &\cdots
  \\
  &x_{n-1}=x_{k,n-1}+\rho\sin{\varphi_{1}}\cos{\varphi_{2}},
  \\
  &x_{n} =x_{k,n}+\rho\cos{\varphi_{1}}.
 \end{aligned}
 \right.
\end{equation}
The Jacobian of the transformation is given by
$\rho^{n-1}\sin^{n-2}{\varphi_{1}}\sin^{n-3}{\varphi_{2}}\cdots\sin{\varphi_{n-2}}$.
Then we derive
\begin{equation}\label{3densityfuntion}
 \begin{aligned}
  &\mathbb{P}_{\texttt{{\rm{x}}}_{k}}(\texttt{{\rm{X}}}_{k+1}\in {\rm d}\texttt{{\rm{x}}})
  \\
  &=2\frac{\pi^{n/2}}{\Gamma(n/2)}c(n,s)r_{k}^{2s}(\rho^{2}-r_{k}^{2})^{-s}\frac{{\rm d}\rho}{\rho}
  \times \frac{{\rm d}\theta}{2\pi}\times\frac{\sin^{n-2}\varphi_{1}}{I_{n-2}}{\rm d}{\varphi_{1}}\times
  \cdots\times \frac{\sin\varphi_{n-2}}{I_{1}}{\rm d}{\varphi_{n-2}},
 \end{aligned}
\end{equation}
where $\rho \geq r_{k}$,
\begin{equation}\label{4In}
 I_{n}=\int_{0}^{\pi}\sin^{n}\varphi{\rm d}\varphi=\left\{
  \begin{aligned}
   &\frac{n-1}{n}\frac{n-3}{n-2}\cdots \frac{1}{2}I_{0},\qquad  \,n \, {\rm is \, even},
   \\
   &\frac{n-1}{n}\frac{n-3}{n-2}\cdots \frac{1}{2}I_{1},\qquad  \, n \,{\rm is \, odd},
  \end{aligned}
  \right.
\end{equation}
with $I_{0}=\pi$, $I_{1}=2$, and
\begin{equation}
  \prod_{k=1}^{n-2}\int_{0}^{\pi}\sin^{k}{\varphi}{\rm d}\varphi
 =I_{1}I_{2}\cdots I_{n-2}=\frac{\pi^{n/2-1}}{\Gamma(n/2)}.
\end{equation}

  From (\ref{3densityfuntion}), we observe that $\theta$ is sampled uniformly on $[0,2\pi]$,
whereas we can sample the radius $\rho$ via the inverse transform sampling method \cite{Kyprianou&Osojnik2018}.
For $U \sim ([0,1])$,
\begin{equation}
 \rho=F^{-1}(U)=r_{k}(I^{-1}(1-U;s,1-s))^{-1/2},
\end{equation}
where $I(x;z,w)$, { $z,w>0$,} is the incomplete beta function
\begin{equation}
 I(x;z,w)=\frac{1}{B(z,w)}\int_{0}^{x}u^{z-1}(1-u)^{w-1}{\rm d}u,\quad x\in[0,1].
\end{equation}

For $\varphi_{1},\varphi_{2},\cdots,\varphi_{n-2}$, we use again the inverse transform method to simulate them.
Denote $I_{n}^{\ast}(\phi)=\int_{0}^{\phi}\sin^{n}\varphi{\rm d}\varphi$ and $I_{n}(\phi)=\frac{1}{I_{n}}
I_{n}^{\ast}(\phi)$, $\phi\in[0,\pi]$. Then we have
\begin{equation}\label{4Inx}
 I_{n}(\phi)=\frac{1}{I_{n}}\left\{
  \begin{aligned}
  &-\frac{1}{n}\sin^{n-1}{\phi}\cos{\phi}+\sum_{i=1}^{n/2-1}-\frac{1}{n-2i}\sin^{n-2i-1}\phi\cos{\phi}
  \prod_{j=0}^{i-1}\frac{n-2j-1}{n-2j}
  \\
  &+\frac{(n-1)!!}{n!!}I_{0}^{*}(\phi), \qquad \qquad \qquad \qquad \qquad \qquad \qquad n\, {\rm is\, even},
  \\
  &-\frac{1}{n}\sin^{n-1}{\phi}\cos{\phi}+\sum_{i=1}^{(n-1)/2-1}-\frac{1}{n-2i}\sin^{n-2i-1}\phi\cos{\phi}
  \prod_{j=0}^{i-1}\frac{n-2j-1}{n-2j}
  \\
  &+\frac{(n-1)!!}{n!!}I_{1}^{*}(\phi), \qquad \qquad \qquad \qquad \qquad \qquad \qquad n\, {\rm is\, odd},
  \end{aligned}
  \right.
\end{equation}
where $I_{0}^{*}(\phi)=\phi$, $I_{1}^{*}(\phi)=1-\cos{\phi}$. For random number $U\sim([0,1])$, we have
\begin{equation}\label{3simulatetheta}
 \varphi_{i}=I_{n-i-1}^{-1}(U).
\end{equation}
\noindent We can readily get $\varphi_{n-2}=\arccos(1-2U)$, $U \sim ([0,1])$. When $i \neq n-2$,
it is complicated to get the inverse density function and we use rejection sampling method to generate samples
$\varphi_{n-m-1}$ from {a target PDF $\frac{1}{I_{m}}p(x)=\frac{1}{I_{m}}\sin^{m}{x}$, $m=2,\ldots ,n-2$.}
The standard RS algorithm \cite{Luca&David2019} allows us to draw samples exactly from the target PDF {$p_{0}(x)$}.

\begin{algo}[\cite{Luca&David2019}]
	 Step 1. Choose an alternative simpler proposal PDF $q_{0}(x)$.
	 \\
	 Step 2. Draw $x'\sim q_{0}(x)$ and $\omega'\sim U([0,1])$.
	 \\
	Step  3. If $\omega'\leq \frac{p_{0}(x')}{Kq_{0}(x')}$, then $x'$ is accepted. Otherwise, $x'$ is discarded.
	 \\
	 Step  4. Repeated steps 2-3 until the desired number of samples has been obtained.
\end{algo}

 Since the target PDF {for $\varphi_{n-m-1}$ $p_{0}(x)=\frac{1}{I_{m}}p_{m}(x)=\frac{1}{I_{m}}\sin^{m}{x}$, $m=2,\ldots ,n-2$} is unimodal and symmetric, the proposal PDF is given by truncated Gaussian density,
 {\begin{equation}\label{truncated Gaussion density}
  q_{0}(x)=C_{q,m}q_{m}(x)=C_{q,m}\beta\exp(-\alpha_{m}(x-\mu)^2),\quad x \in (0,\pi),
 \end{equation}
 where the determination of parameters $\alpha_{m},\beta,\mu$ and the normalizing constant $C_{q,m}$ is in such a way
 that we obtain  a proposal function for applying the rejection sampling method, i.e. $K=I_{m}/C_{q,m}$, such that
 \begin{equation}\label{PDFinequality}
  q_{m}(x)\geq p_{m}(x)
 \end{equation}}
 It is easy to get $\beta=1,\mu=\frac{\pi}{2}$. For parameter $\alpha_{m}$, noting that equation (\ref{PDFinequality})
 implies
 {\begin{equation}
  \ln{q_{m}(x)}=-\alpha_{m}(x-\frac{\pi}{2})^2\geq m\ln{\sin(x)},
 \end{equation}
 so
 \begin{equation}
  \alpha_{m} \leq -\frac{m\ln{\sin(x)}}{(x-\frac{\pi}{2})^2}, \quad x\in(0,\pi).
 \end{equation}
 In order to obtain best positive fit between the proposal and the target, we must set $\alpha_{m}=\lim_{x\rightarrow\frac{\pi}{2}}\frac{-m\ln{\sin(x)}}{(x-\frac{\pi}{2})^2}=\frac{m}{2}$ (see Figure \ref{3fig1}).}
 Thus, samples must be drawn from the selected proposal PDF, $C_{q,m}e^{-\frac{m}{2}(x-\frac{\pi}{2})^2}$, $x \in(0,\pi)$.
 For the truncated Gaussians, the technique available in the literature \cite{Chopin2009} allows us to draw samples efficiently. Finally, the acceptance rate which is the key performance measure for rejection sampling method is
 \begin{equation}
  \eta=\int_{0}^{\pi}\frac{p_{m}(x)}{q_{m}(x)}q_{0}(x){\rm d}x=\frac{I_{m}}{\sqrt{\frac{2\pi}{m}}\,{\rm erf}\left(\sqrt{\frac{2m}{4}}\pi\right)},
 \end{equation}
 where ${\rm erf}(x)$ denotes the error function. It is obvious that $\eta$ is a monotonically increasing function
 converging to $1$ with respect to $m$ and the acceptance rate is more than 91\% (see Figure \ref{3fig1}).
\begin{figure}[!htbp]
    \begin{minipage}[t]{0.5\linewidth}
    \centering
    \includegraphics[height=6cm,width=8cm]{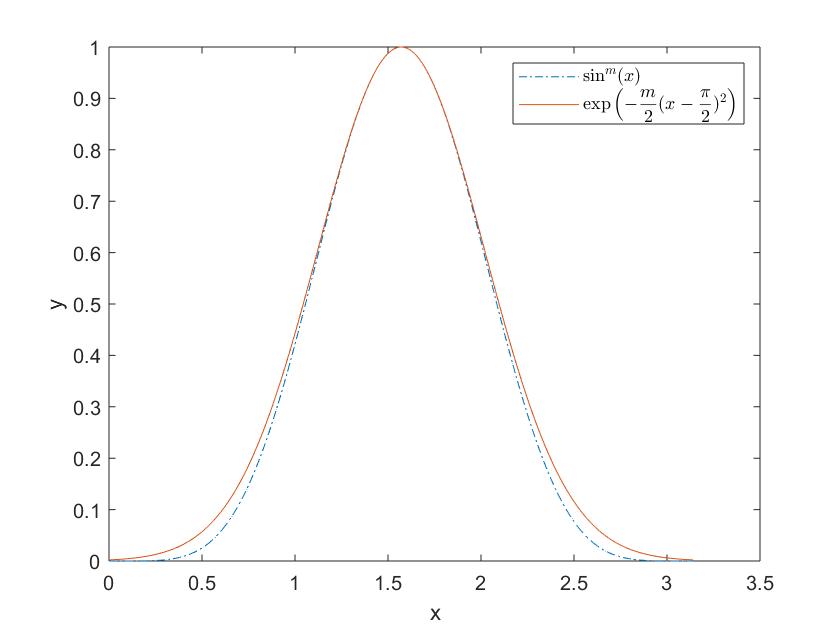}
        \end{minipage}
    \begin{minipage}[t]{0.5\linewidth}
    \centering
    \includegraphics[height=6cm,width=8cm]{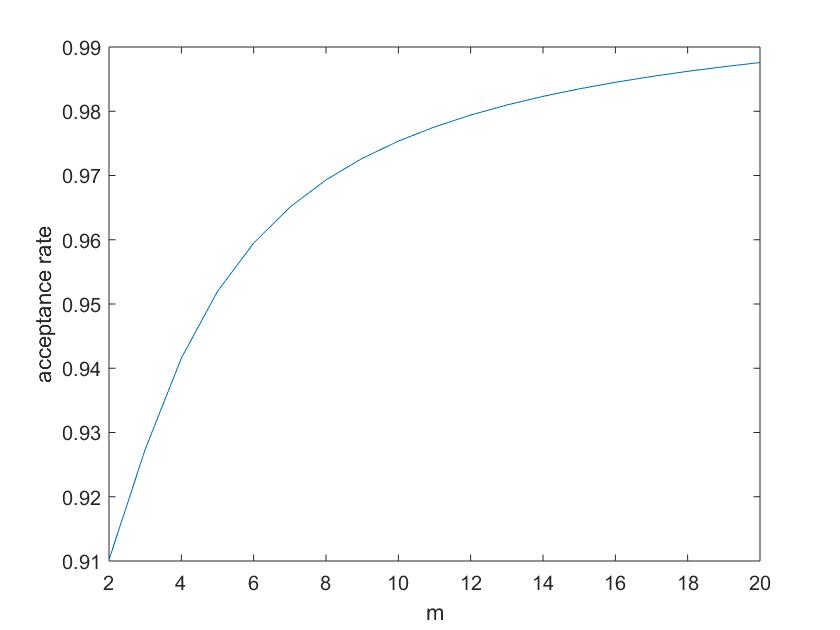}
    \end{minipage}
 \captionsetup{font={footnotesize}}
 \caption{The left figure shows little difference between two functions because of the determination of the parameters.
 The right figure shows the acceptance rate $\eta$ converges to 1.}\label{3fig1}
\end{figure}

Based on the simulation of $\rho,\theta,\varphi_{1},\varphi_{2},\cdots$,$\varphi_{n-2}$, we derive $\texttt{{\rm{x}}}$ so
that the random variable $\texttt{{\rm{X}}}_{k+1}$ can be simulated by $\texttt{{\rm{X}}}_{k+1}=\texttt{{\rm{x}}}$.

  For random variable $\texttt{{\rm{Y}}}_{k+1}$, it is complicated to sample $\texttt{{\rm{Y}}}_{k+1}$ by using density $p_{2,r_{k}}(\texttt{{\rm{y}}},\texttt{{\rm{x}}}_{k})$.
Thus, we rewrite the second part in (\ref{3integralpre}) in the form of
\begin{equation}\label{green_function1}
 \begin{aligned}
  &\int_{\mathbb{B}_{r_{0}}(\texttt{{\rm{x}}}_{0})}f(\texttt{{\rm{y}}})G(0,\texttt{{\rm{y}}}-\texttt{{\rm{x}}}_{0}){\rm d}\texttt{{\rm{y}}}
  \\
 =&\kappa(n,s) \int_{\mathbb{B}_{r_{0}}(\texttt{{\rm{x}}}_{0})}f(\texttt{{\rm{y}}})|\texttt{{\rm{y}}}-\texttt{{\rm{x}}}_{0}|^{2s-n}
 \int_{0}^{\frac{r_{0}^{2}-|\texttt{{\rm{y}}}-\texttt{{\rm{x}}}_{0}|^{2}}{|\texttt{{\rm{y}}}-\texttt{{\rm{x}}}_{0}|^{2}}}
 \frac{t^{s-1}}{(t+1)^{\frac{n}{2}}}{\rm d}t{\rm d}\texttt{{\rm{y}}}.
 \end{aligned}
\end{equation}
Performing substitution $t=\frac{1-t'}{t'}$ yields that
\begin{equation}\label{green_function2}
 \begin{aligned}
   &\int_{\mathbb{B}_{r_{0}}(\texttt{{\rm{x}}}_{0})}f(\texttt{{\rm{y}}})G(0,\texttt{{\rm{y}}}-\texttt{{\rm{x}}}_{0}){\rm d}\texttt{{\rm{y}}}
   \\
  =&\kappa(n,s) \int_{\mathbb{B}_{r_{0}}(\texttt{{\rm{x}}}_{0})}f(\texttt{{\rm{y}}})|\texttt{{\rm{y}}}-\texttt{{\rm{x}}}_{0}|^{2s-n}
  \int_{\frac{|\texttt{{\rm{y}}}-\texttt{{\rm{x}}}_{0}|^{2}}{r_{0}^{2}}}^{1}
  (1-t')^{s-1}(t')^{\frac{n}{2}-s-1}{\rm d}t'{{\rm d}\texttt{{\rm{y}}}}
  \\
  =&b(\texttt{{\rm{x}}}_{0})\mathbb{E}\left\{\left[1-I\left(\frac{|\texttt{{\rm{Y}}}-\texttt{{\rm{x}}}_{0}|^{2}}{r_{0}^{2}};
  \frac{n}{2}-s,s\right)\right]f(\texttt{{\rm{Y}}})\right\},
 \end{aligned}
\end{equation}
where
\begin{equation}
 \begin{aligned}
   b(\texttt{{\rm{x}}}_{0})&=\kappa(n,s)B(\frac{n}{2}-s,s)\int_{B_{r_{0}}(\texttt{{\rm{x}}}_{0})}|\texttt{{\rm{y}}}-
   {\texttt{{\rm{x}}}}_0|^{2s-n}{\rm d}\texttt{{\rm{y}}}
           &=\kappa(n,s)B(\frac{n}{2}-s,s)\frac{r_{0}^{2s}\pi^{n/2}}{s\Gamma(n/2)}
 \end{aligned}
\end{equation}
and the probability density function for \texttt{{\rm{Y}}} is
\begin{equation}{\label{eq:pdf4y}}
p_{2,r_{0}}^{\ast}(\texttt{{\rm{y}}},\texttt{{\rm{x}}}_{0})=\frac{s\Gamma(n/2)}{r_{0}^{2s}\pi^{n/2}}|\texttt{{\rm{y}}}-
\texttt{{\rm{x}}}_{0}|^{2s-n}.
\end{equation}
Thus, the equation (\ref{3intepresolution}) becomes
\begin{equation}\label{simulation}
 u(\texttt{{\rm{x}}}_{0})=\mathbb{E}u(\texttt{{\rm{X}}}_{l})+\sum_{k=1}^{l-1}\mathbb{E}\left\{b(\texttt{{\rm{X}}}_{k})
 \left[1- I\left(\frac{|\texttt{{\rm{Y}}}_{k+1}-\texttt{{\rm{X}}}_{k}|^{2}}{r_{k}^{2}};\frac{n}{2}-s,1-s\right)\right]
 f(\texttt{{\rm{Y}}}_{k+1})
 \right\},
\end{equation}
where the random variable $\texttt{{\rm{Y}}}_{k+1}$ obeys the density $p_{2,r_{k}}^{\ast}(\texttt{{\rm{y}}},\texttt{{\rm{x}}}_{k})$
for the given position $\texttt{{\rm{X}}}_{k}=\texttt{{\rm{x}}}_{k}$.

Using the hyperspherical coordinates (\ref{3hypercoordinate}) for PDF and replacing $\texttt{{\rm{x}}}$ with $\texttt{{\rm{y}}}$
yield
\begin{equation}
 \begin{aligned}
  &\mathbb{P}_{\texttt{{\rm{x}}}_{k}}(\texttt{{\rm{Y}}}_{k+1} \in {\rm d}\texttt{{\rm{y}}})
 =&\frac{2s}{r_{k}^{2s}}\rho^{2s-1}{\rm d}\rho\times \frac{{\rm d}\theta}{2\pi}
 \times \frac{\sin^{n-2}\varphi_{1}}{I_{n-2}}{\rm d}\varphi_{1}\times\cdots
 \times  \frac{\sin\varphi_{n-2}}{I_{1}}{\rm d}\varphi_{n-2},\qquad \rho<r_{k}.
 \end{aligned}
\end{equation}
Notice that $\theta\sim U([0,2\pi])$, $\rho=r_{k}R^{1/2s}$, and $R\sim U([0,1])$ enable us to simulate $\theta$ and $\rho$.
Moreover, we sample $\varphi_{1}$ by inverse sample method and $\varphi_{i},i=2,3,\ldots, n-2$, by rejection sampling method
given before. Then we can simulate $\texttt{{\rm{Y}}}_{k+1}=\texttt{{\rm{y}}}$.

\subsection{One-dimensional fractional Poisson equation}
When $n=1$, the integral   $I\left(\frac{|\texttt{{\rm{Y}}}_{k+1}-\texttt{{\rm{X}}}_{k}|^{2}}{r_{k}^{2}};\frac{1}{2}-s,s\right)$
in (\ref{green_function1}) is infinite if {$\frac{1}{2}< s<1$}. When $0<s<\frac{1}{2}$ and $n=1$,
we can still use the method (\ref{simulation}) with the probability of the moving point's direction $\theta$ is $\mathbb{P}\{\theta=+1\}=\mathbb{P}\{\theta=-1\}=\frac{1}{2}$.
To deal with this troublesome integral, we rewrite (\ref{green_function1}) as follows:
 \begin{equation}\label{simulation_s_1/2_1}
  \begin{aligned}
   &\int_{x_{0}-r_{0}}^{x_{0}+r_{0}}f(y)G(0,y-x_{0}){\rm d}y
   \\
  =&\kappa(1,s)\int_{x_{0}-r_{0}}^{x_{0}+r_{0}}f(y)|y-x_{0}|^{2s-1}\int_{0}^{\frac{r_{0}^{2}-|y-x_{0}|^2}{|y-x_{0}|^2}}
  \frac{t^{s-1}}{(t+1)^{\frac{1}{2}}}{\rm d}t{\rm d}y
  \\
  =&\kappa(1,s)\int_{0}^{r_{0}}[f(x_{0}+\rho)+f(x_{0}-\rho)]{\rho}^{2s-1}\int_{0}^{\frac{r_{0}^{2}-{\rho}^2}{{\rho}^2}}
  \frac{t^{s-1}}{(t+1)^{\frac{1}{2}}}{\rm d}t{\rm d}\rho
  \\
  =&2\kappa(1,s)\int_{0}^{r_{0}}\mathbb{E}_{\theta}[f(x_{0}+\rho\theta)]{\rho}^{2s-1}\int_{0}^{\frac{r_{0}^{2}-{\rho}^2}{{\rho}^2}}
  \frac{t^{s-1}}{(t+1)^{\frac{1}{2}}}{\rm d}t{\rm d}\rho,
  \end{aligned}
 \end{equation}
 where we perform substitution $\rho=|y-x_{0}|$ and $\theta$ {denotes the random variable} with $\mathbb{P}\{\theta=+1\}=\mathbb{P}\{\theta=-1\}=\frac{1}{2}$.
 Via a change of variable $t'={\rho}^2t$ for the inner integral, we obtain
 \begin{equation}
  \begin{aligned}
   \int_{0}^{r_{0}^{2}-{\rho}^2}\left( \frac{t'}{{\rho}^2}\right)^{s-1}\left(\frac{t'}{{\rho}^2}+1\right)^{-\frac{1}{2}}{\rm d}
   \frac{t'}{{\rho}^2}
  ={\rho}^{1-2s}\int_{0}^{r_{0}^{2}-{\rho}^2}t^{s-1}(t+{\rho}^2)^{-\frac{1}{2}}{\rm d}t.
  \end{aligned}
 \end{equation}

It follows from (\ref{simulation_s_1/2_1}) that
 \begin{equation}
  \begin{aligned}
  &2\kappa(1,s)\int_{0}^{r_{0}}\mathbb{E}_{\theta}[f(x_{0}+\rho\theta)]\int_{0}^{r_{0}^{2}-{\rho}^2}t^{s-1}
  (t+{\rho}^2)^{-\frac{1}{2}}{\rm d}t{\rm d}\rho
  \\
  =&2r_{0}\kappa(1,s)\mathbb{E}_{\rho}\left\{\mathbb{E}_{\theta}[f(x_{0}+\rho\theta)]\int_{0}^{r_{0}^{2}-{\rho}^2}t^{s-1}
  (t+{\rho}^2)^{-\frac{1}{2}}{\rm d}t\right\}
  \\
  =&{2r_{0}\kappa(1,s)\mathbb{E}_\texttt{{\rm{Y}}}\left[f(\texttt{{\rm{Y}}})\int_{0}^{r_{0}^{2}
  -{|\texttt{{\rm{Y}}}-\texttt{{\rm{x}}}_{0}|}^2}t^{s-1}(t+{|\texttt{{\rm{Y}}}-\texttt{{\rm{x}}}_{0}|}^2)
  ^{-\frac{1}{2}}{\rm d}t\right]}
  \end{aligned}
 \end{equation}
where $\texttt{{\rm{Y}}}=\texttt{{\rm{x}}}_{0}+\rho\theta$ with $\rho \sim U([0,r_{0}])$ and $\mathbb{P}\{\theta=+1\}=\mathbb{P}\{\theta=-1\}=\frac{1}{2}$. Then
we show that the integral in the expectation can be represented by the hypergeometric function. For simplicity,
let $a=|\texttt{{\rm{Y}}}-\texttt{{\rm{x}}}_{0}|^2$ and $b=r_{0}^{2}-a$. We obtain
\begin{equation}
 \begin{aligned}
&\int_{0}^{b}t^{s-1}(t+a)^{-\frac{1}{2}}{\rm d}t=\int_{0}^{1}(bt)^{s-1}(a+bt)^{-\frac{1}{2}}{\rm d}bt
\\
=&a^{-\frac{1}{2}}b^{s}\int_{0}^{1}t^{s-1}\left(1+\frac{b}{a}t\right)^{-\frac{1}{2}}{\rm d}t
\\
=&a^{-\frac{1}{2}}b^{s}\left[\int_{0}^{1}{t}^{s-1}\left(1+\frac{b}{a}t\right)^{\frac{1}{2}}{\rm d}t-\frac{b}{a}\int_{0}^{1}{t}^{s}\left(1+\frac{b}{a}t\right)^{-\frac{1}{2}}{\rm d}t\right]
\\
=&\frac{a^{-\frac{3}{2}}b^{s}}{s(s+1)}\left[a(s+1)\,{_{2}F_{1}\left(-0.5,s;s+1;-\frac{b}{a}\right)}-bs\,{_{2}F_{1}
\left(0.5,s+1;s+2;-\frac{b}{a}\right)} \right],
 \end{aligned}
\end{equation}
where ${_{2}F_{1}(b,c;d;z)}$ is the hypergeometric function given by the analytic continution
\begin{equation}
_{2}F_{1}(a,b,c;x)=\frac{\Gamma(c)}{\Gamma(b)\Gamma(c-b)}\int_{0}^{1}t^{b-1}(1-t)^{c-b-1}(1-tx)^{-a}{\rm d}t.
\end{equation}
\noindent Finally, equation (\ref{3intepresolution}) can be changed into
\begin{equation}
 \begin{aligned}
 u(\texttt{{\rm{x}}}_{0})=&\mathbb{E}u(\texttt{{\rm{X}}}_{l})+2\kappa(1,s)\sum_{k=1}^{l-1}\mathbb{E}\left\{ r_{k}f(\texttt{{\rm{Y}}}_{k+1})\int_{0}^{r_{k}^{2}-|\texttt{{\rm{Y}}}_{k+1}-\texttt{{\rm{X}}}_{k}|^2}t^{s-1}
 (t+|\texttt{{\rm{Y}}}_{k+1}-\texttt{{\rm{X}}}_{k}|^2)^{-\frac{1}{2}}{\rm d}t\right\}
 \\
         =&\mathbb{E}u(\texttt{{\rm{X}}}_{l})+2\kappa(1,s)\sum_{k=1}^{l-1}\mathbb{E}\Bigg\{ r_{k}f(\texttt{{\rm{Y}}}_{k+1})\frac{1}{|\texttt{{\rm{Y}}}_{k+1}-\texttt{{\rm{X}}}_{k}|^3s(s+1)}
         \\
         &\times \bigg[|\texttt{{\rm{Y}}}_{k+1}-\texttt{{\rm{X}}}_{k}|^2(s+1)\,{_{2}F_{1}\left(-0.5,s;s+1;\frac{r_{k}^{2}
         -|\texttt{{\rm{Y}}}_{k+1}-\texttt{{\rm{X}}}_{k}|^2}{|\texttt{{\rm{Y}}}_{k+1}-\texttt{{\rm{X}}}_{k}|^2}\right)}
         \\
         &-(r_{k}^{2}-|\texttt{{\rm{Y}}}_{k+1}-\texttt{{\rm{X}}}_{k}|^2)s\,{_{2}F_{1}\left(0.5,s+1;s+2;\frac{r_{k}^{2}
         -|\texttt{{\rm{Y}}}_{k+1}-\texttt{{\rm{X}}}_{k}|^2}{|\texttt{{\rm{Y}}}_{k+1}-\texttt{{\rm{X}}}_{k}|^2}\right)}
         \bigg]  \Bigg\}.
 \end{aligned}
\end{equation}

When $n=2s$, i.e., $s=\frac{1}{2}$, we have
\begin{equation}
 \begin{aligned}
u(\texttt{{\rm{x}}}_{0})=\mathbb{E}u(\texttt{{\rm{X}}}_{l})&+2\kappa(1,\frac{1}{2})\sum_{k=1}^{l-1}\mathbb{E}\left\{ r_{k}f(\texttt{{\rm{Y}}}_{k+1})\log\left(\frac{r_{k}+\sqrt{r_{k}^{2}-|\texttt{{\rm{Y}}}_{k+1}-\texttt{{\rm{X}}}_{k}|^2}}
{|\texttt{{\rm{Y}}}_{k+1}-\texttt{{\rm{X}}}_{k}|}\right)\right\}.
\end{aligned}
\end{equation}
Here, $\texttt{{\rm{Y}}}_{k+1}=\texttt{{\rm{x}}}_{k}+\rho\theta$ with $\rho \sim U(0,r_{k})$ and $\mathbb{P}\{\theta=1\}=\mathbb{P}\{\theta=-1\}=\frac{1}{2}$.
{And the random variable $\texttt{{\rm{X}}}_{k}$, $k=1,2,\cdots ,l$ can be derived by the method introduced in high dimension which will undergo a long jump.

\subsection{Summary of the modified walk-one-spheres method}
 We summarize the method in the following algorithm for {equation \eqref{eq:possionproblem}}, {$n\geq2$}:
\begin{algo}\label{algo:mod-walk-on-spheres}
	
Assign fractional order $s$, the domain $\Omega$, the point $\texttt{{\rm{x}}}_{0}\in \mathbb{R}^{n}$, and the number of samples N.
	
Step 1. {Sample $\texttt{{\rm{X}}}_{1}$ and $\texttt{{\rm{Y}}}_{1}$ by probability density functions
$p_{1,r_{0}}(\texttt{{\rm{x}}},\texttt{{\rm{x}}}_{0})$ in (\ref{eq:pdf4x}) and $p_{2,r_{0}}^{\ast}(\texttt{{\rm{y}}},\texttt{{\rm{x}}}_{0})$ in (\ref{eq:pdf4y})
based on $\texttt{{\rm{X}}}_{0}=\texttt{{\rm{x}}}_{0}$, respectively.}

Step 2.   If the latest $\texttt{{\rm{X}}}_{k}$ is out of $\Omega$, go to Step $4$. Otherwise, go to Step $3$.

Step 3. 
{Sample $\texttt{{\rm{X}}}_{k+1}$ and $\texttt{{\rm{Y}}}_{k+1}$by probability density functions
$p_{1,r_{k}}(\texttt{{\rm{x}}},\texttt{{\rm{x}}}_{k})$ in (\ref{eq:pdf4x}) and $p_{2,r_{k}}^{\ast}(\texttt{{\rm{y}}},\texttt{{\rm{x}}}_{k})$ in (\ref{eq:pdf4y})} based on $\texttt{{\rm{X}}}_{k}
=\texttt{{\rm{x}}}_{k}$, respectively and go back to Step $2$.

Step 4. {
Calculate $u(x)=g(\texttt{{\rm{X}}}_{n})+\sum\limits_{k=0}^{l-1}\mathbb{E}\left\{b(\texttt{{\rm{X}}}_{k})
 \left[1- I\left(\frac{|\texttt{{\rm{Y}}}_{k+1}-\texttt{{\rm{X}}}_{k}|^{2}}{r_{k}^{2}};\frac{n}{2}-s,1-s\right)\right]
 f(\texttt{{\rm{Y}}}_{k+1})\right\}$.}

Step 5. Implement {Step 1-4} for $N$ times. Then calculate $u(\texttt{{\rm{x}}})\approx \frac{1}{N}\sum\limits_{i=1}^{N}u^{i}(\texttt{{\rm{x}}})$.
\end{algo}

\section{Bounds on expected steps of walks on spheres}\label{sec:bound-steps-walks}
  In the section, we focus on the problem (\ref{eq:possionball})  when the dimensionality $n\geq2$.
{We give the upper bound on the   number of steps $l$ in expectation as follows}.
{
\begin{theorem}\label{Thm4.1} Consider the   problem  \eqref{eq:possionball} when  $n\geq2$.
 For random walks originating at $\texttt{{\rm{x}}}_{0}\in \mathbb{B}_{r}$, the expectation of the number of steps $l$ in
 Algorithm \ref{algo:mod-walk-on-spheres} is
 \\
 \begin{equation}\label{equ4.1}
 \begin{aligned}
  \mathbb{E}(l)\leq
             &2^{4s+1}{\pi^{\frac{n}{2}}}\frac{\Gamma(s+\frac{n}{2})\Gamma(s+1)}{\Gamma^{2}(\frac{n}{2})}C_{4}(n,s)
             \Bigg\{\frac{2^{s}}{s_{1}}+(r-|\texttt{{\rm{x}}}_{0}|)^s
  \\
       &\times\left[r^{n-{\frac{1+s_{1}+s}{2}}} B\left(1-\frac{1+s_{1}+s}{2},n\right)\right]^{\frac{2(s_{1}+s)}{1+s_{1}+s}}
       \Bigg(\frac{1}{\frac{(1+s_{1}+s)(s_{1}-n)}{1-s_{1}-s}+n}
  \\
       &\times\left[(r+|\texttt{{\rm{x}}}_{0}|)^{\frac{(1+s_{1}+s)(s_{1}-n)}{1-s_{1}-s}+n}
         -\left(\frac{r-|\texttt{{\rm{x}}}_{0}|}{2}\right)^{\frac{(1+s_{1}+s)(s_{1}-n)}{1-s_{1}-s}+n}
         \right]\Bigg)^{\frac{1-s_{1}-s}{1+s_{1}+s}}\Bigg\},
 \end{aligned}
 \end{equation}
where $s_{1}=s$, $s\in\left(0,\frac{1}{3}\right]$ and $s_{1}=\frac{1-s}{2}$, $s\in\left(\frac{1}{3},1\right)$. The right-hand-side function is increasing with respect to $s$ and $|\texttt{{\rm{x}}}_{0}|$, respectively.
\end{theorem}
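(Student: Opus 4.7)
The plan is to combine the probabilistic representation (3.17) with a Hölder-type decomposition. First I would apply (3.17) to the torsion problem by setting $f\equiv 1$ and $g\equiv 0$: using the classical closed form $u(\texttt{x}_0) = \frac{\Gamma(n/2)}{2^{2s}\Gamma(s+1)\Gamma(n/2+s)}(r^2 - |\texttt{x}_0|^2)^s$ of $(-\Delta)^{s}u = 1$ in $\mathbb{B}_r$ with vanishing exterior data, together with the explicit value $a(\texttt{X}_k) = \frac{\Gamma(n/2)}{2^{2s}\Gamma(s+1)\Gamma(n/2+s)}\, r_k^{2s}$ (where $r_k = r - |\texttt{X}_k|$ since $\Omega = \mathbb{B}_r$) obtained from Section~3, all common constants cancel to give the key identity
\begin{equation*}
\mathbb{E}\Bigl[\sum_{k=0}^{l-1} r_k^{2s}\Bigr] = (r^2 - |\texttt{x}_0|^2)^s.
\end{equation*}
Factorising $(r^2 - |\texttt{x}_0|^2)^s = (r - |\texttt{x}_0|)^s(r+|\texttt{x}_0|)^s$ already explains the appearance of the prefactor $(r-|\texttt{x}_0|)^s$ on the right-hand side of (4.1).

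Next, to convert a bound on $\sum r_k^{2s}$ into a bound on $l = \sum 1$, I would apply Hölder's inequality with conjugate exponents $p = (1+s_1+s)/(2(s_1+s))$ and $q = (1+s_1+s)/(1-s_1-s)$, chosen so that $1/p$ and $1/q$ are exactly the powers $2(s_1+s)/(1+s_1+s)$ and $(1-s_1-s)/(1+s_1+s)$ appearing in the statement. Writing $1 = r_k^{2s/p}\cdot r_k^{-2s/p}$ gives
\begin{equation*}
l \leq \Bigl(\sum_{k=0}^{l-1} r_k^{2s}\Bigr)^{1/p}\Bigl(\sum_{k=0}^{l-1} r_k^{-\beta}\Bigr)^{1/q},\qquad \beta=\frac{4s(s_1+s)}{1-s_1-s},
\end{equation*}
and applying Hölder once more at the expectation level reduces the problem to bounding $\mathbb{E}[\sum r_k^{-\beta}]$. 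This I would do by conditioning on $\texttt{X}_{k-1}$ and integrating the explicit hyperspherical transition density (3.9) against $(r-|\texttt{X}_k|)^{-\beta}$: the radial substitution $\rho^2 = r_{k-1}^2/t$ rewrites the singular integrand in the standard Beta form, yielding the factor $r^{\,n-(1+s_1+s)/2}\,B(1-(1+s_1+s)/2,n)$ appearing in (4.1). Integration over the admissible positions of $\texttt{X}_{k-1}$ then produces the bracketed difference $\frac{1}{\gamma}\bigl[(r+|\texttt{x}_0|)^{\gamma}-((r-|\texttt{x}_0|)/2)^{\gamma}\bigr]$ with $\gamma = \frac{(1+s_1+s)(s_1-n)}{1-s_1-s}+n$, while the additive constant $2^s/s_1$ corresponds to a separate baseline contribution from steps where $r_k$ is bounded away from zero.

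The piecewise definition of $s_1$ is dictated by the integrability constraints of the preceding estimates: $s_1 + s < 1$ is needed for the Beta function to be defined, and an analogous condition on $\beta$ further restricts the admissible regime. The choice $s_1 = s$ works for $s \in (0,1/3]$, $s_1 = (1-s)/2$ works for $s \in (1/3,1)$, and the two formulas coincide at $s = 1/3$ so that the bound is continuous across the threshold. The claimed monotonicity of the right-hand side in $s$ and in $|\texttt{x}_0|$ then follows by elementary calculus once the explicit form is in hand, since each factor (the Beta function, the Gamma-function prefactors, the polynomial terms in $r\pm|\texttt{x}_0|$, and the powers of $2$) is monotone in the relevant variable. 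The main obstacle is the estimate of $\mathbb{E}[\sum r_k^{-\beta}]$: because $r_k$ may be arbitrarily small when the walk approaches $\partial\mathbb{B}_r$, pointwise bounds fail, and one must exploit both the decay of the Poisson kernel away from the moving ball and the precise algebraic balance between $s_1$ and $s$ to establish the needed integrability.
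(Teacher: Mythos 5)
Your route is genuinely different from the paper's, and it contains a gap at exactly the point you flag as ``the main obstacle.'' The paper does not take $f\equiv 1$; it takes $f(\texttt{{\rm{x}}})=(d(\texttt{{\rm{x}}}))^{-2s}$ with $d(\texttt{{\rm{x}}})=r-|\texttt{{\rm{x}}}|$. The payoff of that choice is that each summand in the representation becomes
$a(\texttt{{\rm{X}}}_{k})f(\texttt{{\rm{Y}}}_{k+1})=\kappa(n,s)B(s,\tfrac{n}{2})\tfrac{\omega_{n-1}}{2s}\bigl[d(\texttt{{\rm{X}}}_{k})/d(\texttt{{\rm{Y}}}_{k+1})\bigr]^{2s}$, which is bounded \emph{below} by a universal constant because $d(\texttt{{\rm{Y}}}_{k+1})\leq 2d(\texttt{{\rm{X}}}_{k})$ for $\texttt{{\rm{Y}}}_{k+1}\in\mathbb{B}_{r_{k}}(\texttt{{\rm{x}}}_{k})$. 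Hence $\mathbb{E}(l)$ is immediately dominated by the \emph{deterministic} integral $\int_{\mathbb{B}_{r}}d(\texttt{{\rm{y}}})^{-2s}G(\texttt{{\rm{x}}}_{0},\texttt{{\rm{y}}})\,{\rm d}\texttt{{\rm{y}}}$, and all the remaining work (the Chen--Song Green-function bounds of Lemmas 4.1--4.3, the split into $\mathbb{B}_{h}(\texttt{{\rm{x}}}_{0})$ and its complement with $h=(r-|\texttt{{\rm{x}}}_{0}|)/2$, and H\"older's inequality with exponents $\tfrac{1+s_{1}+s}{2(s_{1}+s)}$ and $\tfrac{1+s_{1}+s}{1-s_{1}-s}$) is performed on that spatial integral. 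Your H\"older exponents are the same numbers, but you apply them to the random sum over walk steps, which is a different object.

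The gap: your identity $\mathbb{E}\bigl[\sum_{k=0}^{l-1}r_{k}^{2s}\bigr]=(r^{2}-|\texttt{{\rm{x}}}_{0}|^{2})^{s}$ is correct (the constants do cancel against the torsion function), but converting it into a bound on $\mathbb{E}(l)$ via H\"older requires $\mathbb{E}\bigl[\sum_{k=0}^{l-1}r_{k}^{-\beta}\bigr]<\infty$ with $\beta=\tfrac{4s(s_{1}+s)}{1-s_{1}-s}$, and this is never established. Your sketch --- condition on $\texttt{{\rm{X}}}_{k-1}$ and integrate the transition density --- at best yields $\mathbb{E}\bigl[\sum_{k}r_{k}^{-\beta}\bigr]\leq C\,\mathbb{E}(l)$ \emph{provided} the one-step conditional moment $\mathbb{E}\bigl[(r-|\texttt{{\rm{X}}}_{k}|)^{-\beta}\mid\texttt{{\rm{X}}}_{k-1}\bigr]$ is uniformly bounded. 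It is not: the ball $\mathbb{B}_{r_{k-1}}(\texttt{{\rm{x}}}_{k-1})$ is internally tangent to $\partial\mathbb{B}_{r}$, and near the tangency point the exterior Poisson kernel blows up like $(\mathrm{dist})^{-s}$ while your weight blows up like $(\mathrm{dist})^{-\beta}$, so the integral diverges once $s+\beta\geq 1$ --- which happens for your $\beta$ throughout much of $s\in(0,1)$ (e.g.\ already at $s=1/3$, $\beta=8/3$). So the step you defer is not a technicality; it is precisely where the argument breaks, and the paper's choice $f=d^{-2s}$ exists to avoid ever needing negative moments of $r_{k}$. If you want to salvage your route, you would need a genuinely new estimate on the boundary behaviour of the walk, whereas the paper's estimate reduces to classical two-sided Green function bounds.
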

}
Before the proof of Theorem \ref{Thm4.1}, we need {to introduce} following lemmas.
 Observe that the number  of walks  $l$  depends only on the domain $\Omega$ and is independent of  $f(\texttt{{\rm{x}}})$
 and $g(\texttt{{\rm{x}}})$. Thus we may consider the problem, for fixed $r>0$, given by
\begin{equation}\label{eq:4possionball}
\left\{
\begin{aligned}
 &(-\Delta)^{s}u(\texttt{{\rm{x}}})=f(\texttt{{\rm{x}}}),\, && \texttt{{\rm{x}}}\in \mathbb{B}_{r},
 \\
 &u(\texttt{{\rm{x}}})=0, \, && \texttt{{\rm{x}}}\in \mathbb{R}^{n} \backslash \mathbb{B}_{r},
\end{aligned}
\right.
\end{equation}
where $s\in (0,1)$ and $f(\texttt{{\rm{x}}})=(d(\texttt{{\rm{x}}}))^{-2s}$ and $d(\texttt{{\rm{x}}})=r-|\texttt{{\rm{x}}}|$
denotes the minimum distance from $\texttt{{\rm{x}}}$ to the boundary $\partial \mathbb{B}_{r}$. For any $\texttt{{\rm{x}}}_{k}
\in \mathbb{B}_{r}$, $d(\texttt{{\rm{x}}}_{k})=r_{k}$.

\begin{figure}[!htbp]
  \centering
  \includegraphics[height=5.5cm, width=5.5cm]{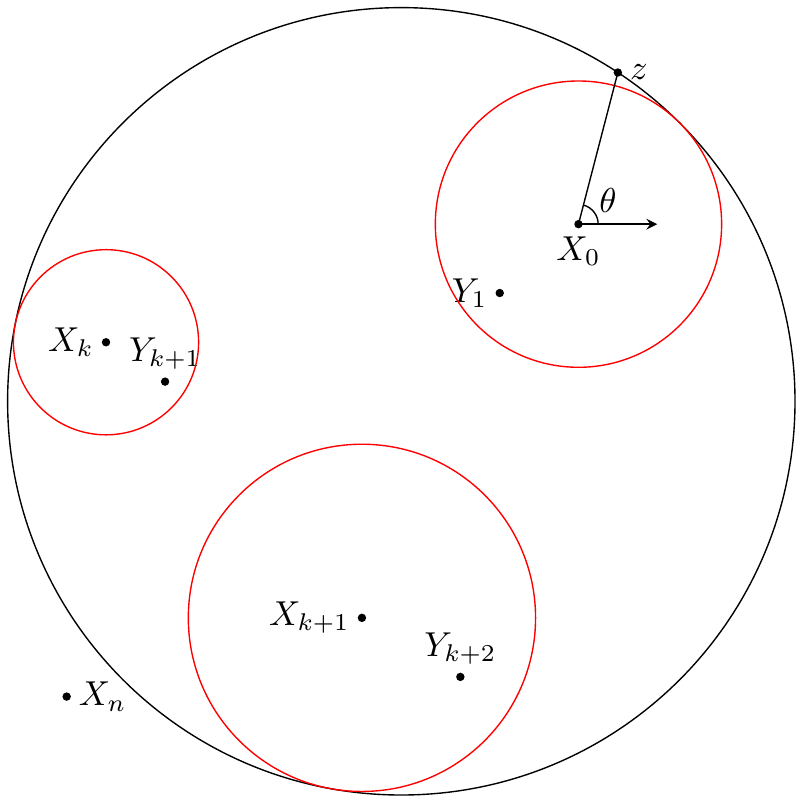}
  \captionsetup{font={footnotesize}}
  \caption{Diagram for expected stopping steps. $\texttt{{\rm{X}}}_{n}$ is the stopping point.}
    \label{4fig1}

\end{figure}

Recall that the Green function $G_{\mathbb{B}_{r}}(\texttt{{\rm{x}}},\texttt{{\rm{y}}})$   in Section 1 is given by
\begin{equation}
 \begin{aligned}
 G_{\mathbb{B}_{r}}(\texttt{{\rm{x}}},\texttt{{\rm{y}}})=2^{-2s}{\pi}^{-n/2}\Gamma(s)^{-2}\Gamma\left(\frac{n}{2}\right)
  |\texttt{{\rm{x}}}-\texttt{{\rm{y}}}|^{2s-n}\int_{0}^{r^{\ast}(\texttt{{\rm{x}}},\texttt{{\rm{y}}})}
  \frac{t^{s-1}}{(t+1)^{\frac{n}{2}}}{\rm d}t.
 \end{aligned}
\end{equation}

\begin{lemma}{\rm({\cite{Chen&Song1998}}})\label{lemma4.1}
 For any ball centred at the origin, $\mathbb{B}_{r}\subset \mathbb{R}^{n}$, we have
 \begin{equation}
  G_{\mathbb{B}_{r}}(\texttt{{\rm{x}}},\texttt{{\rm{y}}})\leq C_{1}{(n,s)}\frac{({d}(\texttt{{\rm{x}}}))^{s}({d}(\texttt{{\rm{y}}}))^s}{|\texttt{{\rm{x}}}-\texttt{{\rm{y}}}|^{n}}, \quad \texttt{{\rm{x}}},\texttt{{\rm{y}}}\in \mathbb{B}_{r},
 \end{equation}
where $C_{1}(n,s)=\pi^{-n/2}\Gamma{(s)}^{-1}\Gamma{(s+1)}^{-1}\Gamma{(\frac{n}{2})}$.
\end{lemma}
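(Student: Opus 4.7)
The plan is to work directly from the explicit Green function formula displayed just before the lemma, and bound the inner integral in an elementary way that happens to produce exactly the claimed constant. Since the lemma is cited from \cite{Chen&Song1998}, I am only sketching a self-contained derivation, not reconstructing the original argument.

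First I would discard the factor $(t+1)^{-n/2}\le 1$ in the integrand, reducing the Green function integral to $\int_{0}^{r^\ast(\texttt{{\rm{x}}},\texttt{{\rm{y}}})} t^{s-1}\,{\rm d}t=\frac{(r^\ast(\texttt{{\rm{x}}},\texttt{{\rm{y}}}))^{s}}{s}$. Substituting the definition $r^\ast(\texttt{{\rm{x}}},\texttt{{\rm{y}}})=\frac{(r^{2}-|\texttt{{\rm{x}}}|^{2})(r^{2}-|\texttt{{\rm{y}}}|^{2})}{r^{2}|\texttt{{\rm{x}}}-\texttt{{\rm{y}}}|^{2}}$ yields a factor $|\texttt{{\rm{x}}}-\texttt{{\rm{y}}}|^{-2s}$ that cancels exactly with the prefactor $|\texttt{{\rm{x}}}-\texttt{{\rm{y}}}|^{2s-n}$ in the Green function, so one is left with
\begin{equation*}
G_{\mathbb{B}_{r}}(\texttt{{\rm{x}}},\texttt{{\rm{y}}})\le \frac{2^{-2s}\Gamma(n/2)}{s\,\pi^{n/2}\Gamma(s)^{2}}\cdot\frac{(r^{2}-|\texttt{{\rm{x}}}|^{2})^{s}(r^{2}-|\texttt{{\rm{y}}}|^{2})^{s}}{r^{2s}|\texttt{{\rm{x}}}-\texttt{{\rm{y}}}|^{n}}.
\end{equation*}

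Second I would factor $r^{2}-|\texttt{{\rm{x}}}|^{2}=(r-|\texttt{{\rm{x}}}|)(r+|\texttt{{\rm{x}}}|)=d(\texttt{{\rm{x}}})(r+|\texttt{{\rm{x}}}|)$ and likewise for $\texttt{{\rm{y}}}$, and apply the trivial bounds $r+|\texttt{{\rm{x}}}|\le 2r$, $r+|\texttt{{\rm{y}}}|\le 2r$ available inside $\mathbb{B}_{r}$. This converts $(r^{2}-|\texttt{{\rm{x}}}|^{2})^{s}(r^{2}-|\texttt{{\rm{y}}}|^{2})^{s}/r^{2s}$ into $(2r)^{2s}d(\texttt{{\rm{x}}})^{s}d(\texttt{{\rm{y}}})^{s}/r^{2s}=4^{s}d(\texttt{{\rm{x}}})^{s}d(\texttt{{\rm{y}}})^{s}$, and the $4^{s}$ cancels the $2^{-2s}$ in the prefactor exactly.

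Third I would collect constants: what remains is $\frac{\Gamma(n/2)}{s\,\pi^{n/2}\Gamma(s)^{2}}=\frac{\Gamma(n/2)}{\pi^{n/2}\Gamma(s)\Gamma(s+1)}=C_{1}(n,s)$ by the functional equation $\Gamma(s+1)=s\Gamma(s)$, giving precisely the inequality stated in the lemma.

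The only point that might look suspicious is that the crude estimate $(t+1)^{-n/2}\le 1$ discards all dimensional information inside the integral, yet the answer still carries the correct $\Gamma(n/2)$ factor. The resolution is that the full dimensional dependence already sits in the normalization $2^{-2s}\pi^{-n/2}\Gamma(s)^{-2}\Gamma(n/2)$ of the Green function; the role of the inner integral is only to supply the factor $(r^\ast)^{s}/s$, whose form is independent of $n$ up to the cancellation with $|\texttt{{\rm{x}}}-\texttt{{\rm{y}}}|^{2s-n}$. So no sharper estimate of the beta-type integral is required to obtain the claimed constant; this is the main (and essentially only) subtle check in the argument.
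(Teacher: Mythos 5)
Your derivation is correct, and it is worth noting that the paper itself offers no proof of this lemma at all: it is simply quoted from Chen and Song (1998). Your elementary argument --- dropping $(t+1)^{-n/2}\le 1$, evaluating $\int_0^{r^\ast}t^{s-1}\,{\rm d}t=(r^\ast)^s/s$, observing the exact cancellation of $|\texttt{{\rm{x}}}-\texttt{{\rm{y}}}|^{-2s}$ against the prefactor $|\texttt{{\rm{x}}}-\texttt{{\rm{y}}}|^{2s-n}$, and then using $r^2-|\texttt{{\rm{x}}}|^2=d(\texttt{{\rm{x}}})(r+|\texttt{{\rm{x}}}|)\le 2r\,d(\texttt{{\rm{x}}})$ so that $(2r)^{2s}/r^{2s}$ kills the $2^{-2s}$ in $\kappa(n,s)$ --- reproduces the constant $C_1(n,s)=\pi^{-n/2}\Gamma(s)^{-1}\Gamma(s+1)^{-1}\Gamma(\frac{n}{2})$ exactly via $\Gamma(s+1)=s\Gamma(s)$, and is in fact essentially the argument used in the cited source. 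Your closing remark correctly identifies that no sharpness is needed in the beta-type integral. The only caveat to record is scope: your computation starts from the displayed formula for $G_{\mathbb{B}_r}$ with the prefactor $2^{-2s}\pi^{-n/2}\Gamma(s)^{-2}\Gamma(\frac{n}{2})$, which the paper defines only for $n\ge 2$ (the case $n=1$ has a different, logarithmic form); since Section 4 explicitly restricts to $n\ge 2$, this covers everything the lemma is used for, but you should state the restriction rather than claim the bound for all $n\ge 1$.
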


\begin{lemma}{\rm({\cite{Chen&Song1998}}})\label{lemma4.2}
 For any ball centred at the origin, $\mathbb{B}_{r}\subset \mathbb{R}^{n}$, we have
 \begin{equation}
  G_{\mathbb{B}_{r}}(\texttt{{\rm{x}}},\texttt{{\rm{y}}})\leq C_{3}{(n,s)}\frac{({d}(\texttt{{\rm{x}}}))^{s}}{{({d}(\texttt{{\rm{y}}}))^s}|\texttt{{\rm{x}}}-\texttt{{\rm{y}}}|^{n-2s}}, \quad \texttt{{\rm{x}}},\texttt{{\rm{y}}}\in \mathbb{B}_{r},
 \end{equation}
where $C_{3}(n,s)=2^{2s}\max\left\{C_{1}(n,s),C_{2}(n,s)\right\}$ and $C_{2}(n,s)=2^{-2s}\pi^{-n/2}\Gamma(\frac{n}{2}-s)\Gamma(s)^{-1}$.
\end{lemma}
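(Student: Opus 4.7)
\textbf{Proof plan for Theorem \ref{Thm4.1}.}

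\emph{Step 1: Reduce the probabilistic quantity $\mathbb{E}(l)$ to a deterministic integral.} The plan is to introduce the auxiliary problem
\begin{equation*}
(-\Delta)^{s} v(\texttt{{\rm{x}}}) = d(\texttt{{\rm{x}}})^{-2s} \text{ in } \mathbb{B}_{r}, \qquad v = 0 \text{ on } \mathbb{R}^{n}\setminus\mathbb{B}_{r},
\end{equation*}
exactly as in \eqref{eq:4possionball}. Applying the walk-on-spheres telescoping identity \eqref{3intepresolution} with $g=0$ and $f(y)=d(y)^{-2s}$ gives
\begin{equation*}
v(\texttt{{\rm{x}}}_{0}) = \mathbb{E}\Bigl[\sum_{k=0}^{l-1} \int_{\mathbb{B}_{r_{k}}(\texttt{{\rm{X}}}_{k})} G_{\mathbb{B}_{r_{k}}(\texttt{{\rm{X}}}_{k})}(\texttt{{\rm{X}}}_{k},\texttt{{\rm{y}}})\, d(\texttt{{\rm{y}}})^{-2s}\,\mathrm{d}\texttt{{\rm{y}}}\Bigr].
\end{equation*}
For every $\texttt{{\rm{y}}}\in \mathbb{B}_{r_{k}}(\texttt{{\rm{X}}}_{k})$ the triangle inequality yields $d(\texttt{{\rm{y}}}) \le 2r_{k}$, so $d(\texttt{{\rm{y}}})^{-2s}\ge (2r_{k})^{-2s}$. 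Combined with the closed-form computation of $a(\texttt{{\rm{x}}}_{0})$ already displayed in Section~3, the inner integral is at least $c_{0} := 2^{-2s}\kappa(n,s) B(s,\tfrac{n}{2})\,\omega_{n-1}/(2s)$, which is $\texttt{{\rm{X}}}_{k}$-independent. Therefore $v(\texttt{{\rm{x}}}_{0}) \ge c_{0}\,\mathbb{E}(l)$, and the problem reduces to giving an explicit upper bound on
\begin{equation*}
v(\texttt{{\rm{x}}}_{0}) = \int_{\mathbb{B}_{r}} G_{\mathbb{B}_{r}}(\texttt{{\rm{x}}}_{0},\texttt{{\rm{y}}})\, d(\texttt{{\rm{y}}})^{-2s}\,\mathrm{d}\texttt{{\rm{y}}}.
\end{equation*}

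\emph{Step 2: Interpolate Lemmas \ref{lemma4.1} and \ref{lemma4.2} to control $G_{\mathbb{B}_{r}}$.} I would take a geometric mean of the two Green-function bounds with weights $\lambda$ and $1-\lambda$, yielding
\begin{equation*}
G_{\mathbb{B}_{r}}(\texttt{{\rm{x}}},\texttt{{\rm{y}}}) \le C_{1}(n,s)^{\lambda} C_{3}(n,s)^{1-\lambda}\,\frac{d(\texttt{{\rm{x}}})^{s}\,d(\texttt{{\rm{y}}})^{s(2\lambda-1)}}{|\texttt{{\rm{x}}}-\texttt{{\rm{y}}}|^{n-2s(1-\lambda)}},
\end{equation*}
and package the constant as $C_{4}(n,s)$. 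The free parameter $\lambda$ is reparameterised by setting $s_{1}=s(1-2\lambda)$ plus a shift so that $s_{1}\in\{s,\tfrac{1-s}{2}\}$ according to the case split in the theorem. The two cases arise from integrability requirements: in Step~3 the boundary-weight integral $\int d(\texttt{{\rm{y}}})^{-(1+s_{1}+s)/2}\mathrm{d}\texttt{{\rm{y}}}$ must converge (forcing $s_{1}+s<1$) and the near-diagonal integral $\int |\texttt{{\rm{x}}}_{0}-\texttt{{\rm{y}}}|^{-E}\mathrm{d}\texttt{{\rm{y}}}$ must also converge; the choice $s_{1}=s$ is admissible precisely when $s\le 1/3$, and $s_{1}=(1-s)/2$ is used to rebalance the two constraints when $s>1/3$.

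\emph{Step 3: Apply Hölder's inequality with paired exponents $p = \tfrac{1+s_{1}+s}{2(s_{1}+s)}$, $q = \tfrac{1+s_{1}+s}{1-s_{1}-s}$.} Splitting the factor $d(\texttt{{\rm{y}}})^{s(2\lambda-1)}\,|\texttt{{\rm{x}}}_{0}-\texttt{{\rm{y}}}|^{-(n-2s(1-\lambda))}\,d(\texttt{{\rm{y}}})^{-2s}$ into the portion carrying all the $d(\texttt{{\rm{y}}})$-weight and the portion carrying the Riesz kernel, Hölder yields
\begin{equation*}
v(\texttt{{\rm{x}}}_{0}) \le C_{4}(n,s)\,d(\texttt{{\rm{x}}}_{0})^{s}\,\Bigl(\int_{\mathbb{B}_{r}} d(\texttt{{\rm{y}}})^{-\frac{1+s_{1}+s}{2}}\mathrm{d}\texttt{{\rm{y}}}\Bigr)^{1/p}\Bigl(\int_{\mathbb{B}_{r}} |\texttt{{\rm{x}}}_{0}-\texttt{{\rm{y}}}|^{-E}\mathrm{d}\texttt{{\rm{y}}}\Bigr)^{1/q},
\end{equation*}
with $E = \tfrac{(1+s_{1}+s)(n-s_{1})}{1-s_{1}-s}$. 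The first integral equals $\omega_{n-1} r^{n-(1+s_{1}+s)/2}\,B\bigl(n,\,1-(1+s_{1}+s)/2\bigr)$ by a radial Beta-integral calculation, matching the bracketed term in \eqref{equ4.1}. The second integral is evaluated in polar coordinates about $\texttt{{\rm{x}}}_{0}$, giving $\int_{(r-|\texttt{{\rm{x}}}_{0}|)/2}^{r+|\texttt{{\rm{x}}}_{0}|}\rho^{n-E-1}\mathrm{d}\rho$ up to a solid-angle factor absorbed into $C_{4}(n,s)$; the lower endpoint $(r-|\texttt{{\rm{x}}}_{0}|)/2\approx d(\texttt{{\rm{x}}}_{0})/2$ is the natural cutoff separating the singular near-diagonal regime from the bulk. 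Collecting everything and dividing by $c_{0}$ produces exactly \eqref{equ4.1}, with the factor $\tfrac{2^{s}}{s_{1}}$ emerging from a small boundary layer near $\texttt{{\rm{x}}}_{0}$ that has to be treated separately before Hölder is applied.

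\emph{Step 4: Monotonicity in $s$ and $|\texttt{{\rm{x}}}_{0}|$.} Finally I would verify the last sentence of the theorem by direct inspection: as $|\texttt{{\rm{x}}}_{0}|$ increases, $(r-|\texttt{{\rm{x}}}_{0}|)^{s}$ decreases but the interval $[(r-|\texttt{{\rm{x}}}_{0}|)/2,\,r+|\texttt{{\rm{x}}}_{0}|]$ in the second integral expands, and a short logarithmic derivative calculation shows the second effect dominates; as $s$ increases, $C_{4}(n,s)$, $\Gamma(s+\tfrac{n}{2})\Gamma(s+1)$, and the exponent factors all grow monotonically (checked via $\psi$-function signs), while the Beta factor $B(1-(1+s_{1}+s)/2,n)$ is increasing in its first argument once one traces through the two cases for $s_{1}$. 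The main obstacle will be Step 3, namely pinning down the exponents so that the Hölder outputs match the exact algebraic form of \eqref{equ4.1}, and keeping track of which constants enter $C_{4}(n,s)$ versus which appear explicitly.
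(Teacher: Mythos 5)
There is a fundamental mismatch here: the statement you were asked to prove is Lemma \ref{lemma4.2}, the Green-function estimate $G_{\mathbb{B}_{r}}(\texttt{{\rm{x}}},\texttt{{\rm{y}}})\leq C_{3}(n,s)\,d(\texttt{{\rm{x}}})^{s}d(\texttt{{\rm{y}}})^{-s}|\texttt{{\rm{x}}}-\texttt{{\rm{y}}}|^{-(n-2s)}$, but what you have written is a proof plan for Theorem \ref{Thm4.1} (the bound on $\mathbb{E}(l)$). Worse, your Step 2 explicitly \emph{invokes} Lemma \ref{lemma4.2} as one of the two bounds to be interpolated, so as an argument for the target statement it is circular. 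Nothing in the proposal engages with the actual content of Lemma \ref{lemma4.2}. (For what it is worth, the paper does not prove this lemma either; it cites Chen and Song, and its own contribution at this point is only the interpolated Lemma \ref{lemma4.3}, which is the analogue of your Step 2.)

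A direct proof of the stated lemma would run roughly as follows. From the explicit formula \eqref{Greenfunction} one has the unconditional bound
\begin{equation*}
G_{\mathbb{B}_{r}}(\texttt{{\rm{x}}},\texttt{{\rm{y}}})\leq \kappa(n,s)\,|\texttt{{\rm{x}}}-\texttt{{\rm{y}}}|^{2s-n}\int_{0}^{\infty}\frac{t^{s-1}}{(1+t)^{n/2}}\,{\rm d}t=\kappa(n,s)B\left(s,\tfrac{n}{2}-s\right)|\texttt{{\rm{x}}}-\texttt{{\rm{y}}}|^{2s-n}=C_{2}(n,s)\,|\texttt{{\rm{x}}}-\texttt{{\rm{y}}}|^{2s-n}.
\end{equation*}
Now split into two cases. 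If $d(\texttt{{\rm{x}}})\geq \tfrac{1}{2}d(\texttt{{\rm{y}}})$, then $d(\texttt{{\rm{x}}})^{s}/d(\texttt{{\rm{y}}})^{s}\geq 2^{-s}$ and the displayed bound already gives $G\leq 2^{s}C_{2}\,d(\texttt{{\rm{x}}})^{s}d(\texttt{{\rm{y}}})^{-s}|\texttt{{\rm{x}}}-\texttt{{\rm{y}}}|^{2s-n}$. If $d(\texttt{{\rm{x}}})<\tfrac{1}{2}d(\texttt{{\rm{y}}})$, then $|\texttt{{\rm{x}}}-\texttt{{\rm{y}}}|\geq d(\texttt{{\rm{y}}})-d(\texttt{{\rm{x}}})>\tfrac{1}{2}d(\texttt{{\rm{y}}})$, and Lemma \ref{lemma4.1} gives $G\leq C_{1}\,d(\texttt{{\rm{x}}})^{s}d(\texttt{{\rm{y}}})^{s}|\texttt{{\rm{x}}}-\texttt{{\rm{y}}}|^{-2s}|\texttt{{\rm{x}}}-\texttt{{\rm{y}}}|^{2s-n}\leq 2^{2s}C_{1}\,d(\texttt{{\rm{x}}})^{s}d(\texttt{{\rm{y}}})^{-s}|\texttt{{\rm{x}}}-\texttt{{\rm{y}}}|^{2s-n}$. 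Taking the maximum of the two constants yields $C_{3}(n,s)=2^{2s}\max\{C_{1}(n,s),C_{2}(n,s)\}$ as claimed. You should supply an argument of this kind (or an explicit citation) for the lemma itself, and keep your Theorem \ref{Thm4.1} material where it belongs.
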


Then we have the result in the following.
\begin{lemma}\label{lemma4.3}
For any ball centred at the origin, $\mathbb{B}_{r}\subset \mathbb{R}^{n}$, and $s_{1}\in (0,2s)$,
 \begin{equation}
  G_{\mathbb{B}_{r}}(\texttt{{\rm{x}}},\texttt{{\rm{y}}})\leq {C_{4}{(n,s)}}({d}(\texttt{{\rm{x}}}))^{s}\frac{({d}(\texttt{{\rm{y}}}))^{s-s_{1}}}{|\texttt{{\rm{x}}}
  -\texttt{{\rm{y}}}|^{n-s_{1}}}, \quad \texttt{{\rm{x}}},\texttt{{\rm{y}}}\in \mathbb{B}_{r},
 \end{equation}
 where ${C_{4}{(n,s)}}=\pi^{-n/2}\Gamma(s)^{-1}
 \max\left\{\frac{2^{2s}\Gamma({\frac{n}{2}})}{\Gamma(1+s)},\Gamma(\frac{n}{2}-s)\right\}$.
\end{lemma}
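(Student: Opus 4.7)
The plan is to interpolate multiplicatively between the two bounds already established in Lemma \ref{lemma4.1} and Lemma \ref{lemma4.2}. Observe that the target exponent pattern $(d(x))^{s}(d(y))^{s-s_1}|x-y|^{s_1-n}$ sits exactly on the line segment joining the exponent patterns of Lemma \ref{lemma4.1} (pattern $(s,s,-n)$ for $(d(x),d(y),|x-y|)$) and Lemma \ref{lemma4.2} (pattern $(s,-s,2s-n)$). So the natural move is to write
\begin{equation}
 G_{\mathbb{B}_r}(x,y) \;=\; \bigl[G_{\mathbb{B}_r}(x,y)\bigr]^{\alpha}\cdot\bigl[G_{\mathbb{B}_r}(x,y)\bigr]^{1-\alpha}
\end{equation}
and pick $\alpha\in(0,1)$ so that the two factor bounds from Lemmas \ref{lemma4.1}--\ref{lemma4.2} combine to give precisely the claimed exponents.

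First I would solve the linear system for $\alpha$. Matching the $|x-y|$-exponent gives $-\alpha n+(1-\alpha)(2s-n)=s_1-n$, which yields $1-\alpha=s_1/(2s)$, i.e.\ $\alpha=(2s-s_1)/(2s)$. The hypothesis $s_1\in(0,2s)$ is exactly what guarantees $\alpha\in(0,1)$, so the interpolation is legal. With this choice a quick check shows the $d(x)$-exponent is $\alpha s+(1-\alpha)s=s$ and the $d(y)$-exponent is $\alpha s-(1-\alpha)s=s(2\alpha-1)=s-s_1$, matching the claim.

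Next I would apply Lemma \ref{lemma4.1} to the factor raised to $\alpha$ and Lemma \ref{lemma4.2} to the factor raised to $1-\alpha$, obtaining
\begin{equation}
 G_{\mathbb{B}_r}(x,y)\;\le\; C_1(n,s)^{\alpha}\,C_3(n,s)^{1-\alpha}\,(d(x))^{s}\,\frac{(d(y))^{s-s_1}}{|x-y|^{n-s_1}}.
\end{equation}
It then remains to bound the prefactor by $C_4(n,s)$. Unpacking the definitions,
\begin{equation}
 C_3(n,s)\;=\;2^{2s}\max\{C_1(n,s),C_2(n,s)\}\;=\;\pi^{-n/2}\Gamma(s)^{-1}\max\!\left\{\frac{2^{2s}\Gamma(n/2)}{\Gamma(1+s)},\,\Gamma(n/2-s)\right\}\;=\;C_4(n,s),
\end{equation}
and $C_1(n,s)\le 2^{2s}C_1(n,s)\le C_3(n,s)=C_4(n,s)$. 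Hence $C_1^{\alpha}C_3^{1-\alpha}\le C_4(n,s)$, finishing the proof.

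There is no real obstacle here beyond the bookkeeping: the only thing to verify carefully is that the constant identity $C_3(n,s)=C_4(n,s)$ holds, and that the admissible range $s_1\in(0,2s)$ aligns with the interpolation weight lying in $(0,1)$; both are immediate from the definitions. No additional regularity or integration arguments are needed, since Lemmas \ref{lemma4.1} and \ref{lemma4.2} already encode all the analytic information about $G_{\mathbb{B}_r}$ that we need.
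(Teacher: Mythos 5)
Your proof is correct and follows essentially the same route as the paper's: both arguments combine Lemmas \ref{lemma4.1} and \ref{lemma4.2} by interpolating between the two bounds, with $s_{1}\in(0,2s)$ being precisely the condition that makes the interpolation admissible, and both identify $C_{4}(n,s)=C_{3}(n,s)=2^{2s}\max\{C_{1}(n,s),C_{2}(n,s)\}$. The only cosmetic difference is that the paper extracts the common factor from the minimum of the two estimates and observes that $\min\left\{\left(\tfrac{d(\texttt{{\rm{y}}})}{|\texttt{{\rm{x}}}-\texttt{{\rm{y}}}|}\right)^{s_{1}},\left(\tfrac{|\texttt{{\rm{x}}}-\texttt{{\rm{y}}}|}{d(\texttt{{\rm{y}}})}\right)^{2s-s_{1}}\right\}\le 1$, whereas you take the weighted geometric mean $a^{\alpha}b^{1-\alpha}$ with $\alpha=(2s-s_{1})/(2s)$ --- the same inequality in different clothing.
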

\begin{proof}
  Clearly, Lemma \ref{lemma4.3} holds for $\texttt{{\rm{x}}}=\texttt{{\rm{y}}}$. For $\texttt{{\rm{x}}},\texttt{{\rm{y}}} \in
  \mathbb{B}_{r}$, by Lemmas \ref{lemma4.1} and \ref{lemma4.2},
we obtain
 \begin{equation}
  \begin{aligned}
   G_{\mathbb{B}_{r}}(\texttt{{\rm{x}}},\texttt{{\rm{y}}})& \leq \left[C_{1}{(n,s)}\frac{({d}(\texttt{{\rm{x}}}))^{s}({d}(\texttt{{\rm{y}}}))^s}{|\texttt{{\rm{x}}}-\texttt{{\rm{y}}}|^{n}}\right] \wedge \left[C_{3}{(n,s)}\frac{({d}(\texttt{{\rm{x}}}))^{s}}
   {{({d}(\texttt{{\rm{y}}}))^s}|\texttt{{\rm{x}}}-\texttt{{\rm{y}}}|^{n-2s}}\right]\\
   & \leq \max\{C_{1}(n,s),C_{3}(n,s)\}({d}(\texttt{{\rm{x}}}))^{s}\frac{({d}(\texttt{{\rm{y}}}))^{s-s_{1}}}{|\texttt{{\rm{x}}}-\texttt{{\rm{y}}}|^{n-s_{1}}}
   \left\{   \Big(\frac{{d}(\texttt{{\rm{y}}})}{|\texttt{{\rm{x}}}-\texttt{{\rm{y}}}|}\Big)^{s_{1}} \wedge \Big(\frac{{|\texttt{{\rm{x}}}-\texttt{{\rm{y}}}|}}{{d}(\texttt{{\rm{y}}})}\Big)^{2s-s_{1}}\right\}
   \\
   & \leq {C_{4}(n,s)}({d}(\texttt{{\rm{x}}}))^{s}\frac{({d}(\texttt{{\rm{y}}}))^{s-s_{1}}}
   {|\texttt{{\rm{x}}}-\texttt{{\rm{y}}}|^{n-s_{1}}},
  \end{aligned}
 \end{equation}
 where $a \wedge b=\min\{a,b\}$ and
  \begin{equation}
  \begin{aligned}
  {C_{4}(n,s)}&=\max\big\{C_{1}(n,s),2^{2s}\max\{C_{1}(n,s),C_{2}(n,s)\}\big\}
  =2^{2s}\max\{C_{1}(n,s),C_{2}(n,s)\}
  \\
  &=\pi^{-n/2}\Gamma{(s)}^{-1}\max\left\{\frac{2^{2s}\Gamma({\frac{n}{2}})}{\Gamma(1+s)},
 \Gamma\left(\frac{n}{2}-s\right)\right\}.
  \end{aligned}
  \end{equation}
\end{proof}

\noindent\textit{ Proof of Theorem \ref{Thm4.1}.} \quad From Lemma \ref{lemma4.3} we know that $\int_{\mathbb{B}_{r}}f(\texttt{{\rm{y}}})G(\texttt{{\rm{x}}}_{0},\texttt{{\rm{y}}}){\rm d}\texttt{{\rm{y}}}$ with $\texttt{{\rm{x}}}_{0}\in \mathbb{B}_{r}$
is finite and  is the solution to problem (\ref{eq:4possionball}), i.e.,
\begin{equation}\label{solutionfor4.1}
  u(\texttt{{\rm{x}}}_{0})=\int_{\mathbb{B}_{r}}f(\texttt{{\rm{y}}})G(\texttt{{\rm{x}}}_{0},\texttt{{\rm{y}}}){\rm d}\texttt{{\rm{y}}}, \quad \texttt{{\rm{x}}}_{0}\in \mathbb{B}_{r}.
\end{equation}

\noindent By using equation (\ref{3intepresolution}), we derive
\begin{equation}
 \begin{aligned}
  u(\texttt{{\rm{x}}}_{0})=\int_{\mathbb{B}_{r}}f(\texttt{{\rm{y}}})G(\texttt{{\rm{x}}}_{0},\texttt{{\rm{y}}}){\rm d}\texttt{{\rm{y}}}
   &=\mathbb{E}g(\texttt{{\rm{X}}}_{l})+\mathbb{E}\left[\sum_{k=0}^{l-1}a(\texttt{{\rm{x}}}_{k})f(\texttt{{\rm{Y}}}_{k+1})\right]
   \\
   &=\mathbb{E}\left[\sum_{k=0}^{l-1}a(\texttt{{\rm{X}}}_{k})f(\texttt{{\rm{Y}}}_{k+1})\right],
 \end{aligned}
\end{equation}
where $\mathbb{E}u(\texttt{{\rm{X}}}_{l})=\mathbb{E}g(\texttt{{\rm{X}}}_{l})=0$ have been used since $\texttt{{\rm{X}}}_{l}$
is in the outside of the ball. For $k=0,1,\cdots, l-1$, recalling
\begin{equation}
 a(\texttt{{\rm{X}}}_{k})=\kappa(n,s)B(s,\frac{n}{2})\frac{\omega_{n-1}}{2s}{d}(\texttt{{\rm{X}}}_{k})^{2s},
\end{equation}
the definition of $f(\texttt{{\rm{x}}})=({d}(\texttt{{\rm{x}}}))^{-2s}$ in equation \eqref{eq:4possionball}
and the fact that ${d}(\texttt{{\rm{Y}}}_{k+1})\leq 2{d}(\texttt{{\rm{X}}}_{k})$ (see Figure \ref{4fig1}) lead to the
lower bound
\begin{equation}
 a(\texttt{{\rm{X}}}_{k})f(\texttt{{\rm{Y}}}_{k+1})=\kappa(n,s)B\left(s,\frac{n}{2}\right)
 \frac{\omega_{n-1}}{2s}\left[{\frac{{d}(\texttt{{\rm{X}}}_{k})}{{d}(\texttt{{\rm{Y}}}_{k+1})}}\right]^{2s}
 \geq {\kappa(n,s)B\left(s,\frac{n}{2}\right)\frac{\omega_{n-1}}{2s}\left(\frac{1}{2}\right)^{2s}}.
\end{equation}
Then we have
\begin{equation}\label{4expectedstepast}
 \kappa(n,s)B\left(s,\frac{n}{2}\right)\frac{\omega_{n-1}}{2s}\left(\frac{1}{2}\right)^{2s}\mathbb{E}(l)
 \leq \int_{\mathbb{B}_{r}}f(\texttt{{\rm{y}}})G(\texttt{{\rm{x}}}_{0},\texttt{{\rm{y}}}){\rm d}\texttt{{\rm{y}}}.
\end{equation}
For the right hand side of the inequality, we utilize Lemma \ref{lemma4.3} and partition domain $B_{r}$ into two parts,
{\begin{equation}\label{estimate4green}
 \begin{aligned}
  \int_{\mathbb{B}_{r}}f(\texttt{{\rm{y}}})G(\texttt{{\rm{x}}}_{0},\texttt{{\rm{y}}}){\rm d}\texttt{{\rm{y}}}
  &=\kappa(n,s)\int_{\mathbb{B}_{r}}f(\texttt{{\rm{y}}})|\texttt{{\rm{y}}}-\texttt{{\rm{x}}}_{0}|^{2s-n}\int_{0}^{r^{\ast}
  (\texttt{{\rm{x}}}_{0},\texttt{{\rm{y}}})}\frac{t^{s-1}}{(t+1)^{\frac{n}{2}}}{\rm d}t{\rm d}\texttt{{\rm{y}}}
  \\
  &\leq C_{4}(n,s)({d}(\texttt{{\rm{x}}}_{0}))^{s}\int_{\mathbb{B}_{r}}f(\texttt{{\rm{y}}})
  \frac{({d}(\texttt{{\rm{y}}}))^{s-s_{1}}}{|\texttt{{\rm{x}}}_{0}-\texttt{{\rm{y}}}|^{n-s_{1}}}{\rm d}\texttt{{\rm{y}}}
  \\
  &=C_{4}(n,s)({d}(\texttt{{\rm{x}}}_{0}))^{s}\left[\int\limits_{\mathbb{B}_{h}(\texttt{{\rm{x}}}_{0})}
  +\int\limits_{\mathbb{B}_{r}\backslash {\mathbb{B}_{h}(\texttt{{\rm{x}}}_{0})}}\right]f(\texttt{{\rm{y}}})
  \frac{({d}(\texttt{{\rm{y}}}))^{s-s_{1}}}{  |\texttt{{\rm{x}}}_{0}-\texttt{{\rm{y}}}|^{n-s_{1}}}{\rm d}\texttt{{\rm{y}}}
  \\
  &=C_{4}(n,s)({d}(\texttt{{\rm{x}}}_{0}))^{s}[I_{1}+I_{2}],
 \end{aligned}
\end{equation}
}
where we set {$h=\frac{r-|\texttt{{\rm{x}}}_{0}|}{2}$, $s_{1}=s$, $s\in \left(0,\frac{1}{3}\right]$ and $s_{1}=\frac{1-s}{2}$, $s\in(\frac{1}{3},1)$}. For $I_{1}$, we obtain
{\begin{equation}\label{est4I1}
 \begin{aligned}
  I_{1}&=\int_{\mathbb{B}_{h}(\texttt{{\rm{x}}}_{0})}({d}(\texttt{{\rm{y}}}))^{-s-s_{1}}|\texttt{{\rm{y}}}-
  \texttt{{\rm{x}}}_{0}|^{s_{1}-n}{\rm d}\texttt{{\rm{y}}}
  \\
       &\leq (r-|\texttt{{\rm{x}}}_{0}|-h)^{-s-s_{1}}\int_{\mathbb{B}_{h}(\texttt{{\rm{x}}}_{0})}
       |\texttt{{\rm{y}}}-\texttt{{\rm{x}}}_{0}|^{s_{1}-n}{\rm d}\texttt{{\rm{y}}}
  \\
       &=(r-|\texttt{{\rm{x}}}_{0}|-h)^{-s-s_{1}}\frac{\omega_{n-1}}{s_{1}}h^{s_{1}}
  \\
       &=\frac{2^{s}\omega_{n-1}}{s_{1}}(r-|\texttt{{\rm{x}}}_{0}|)^{-s}.
 \end{aligned}
\end{equation}
}
For $I_{2}$, we have
{
\begin{equation}\label{est4I2}
 \begin{aligned}
  I_{2}&=\int_{\mathbb{B}_{r}\backslash {\mathbb{B}_{h}(\texttt{{\rm{x}}}_{0})}}{d}(\texttt{{\rm{y}}})^{-s-s_{1}}|\texttt{{\rm{x}}}_{0}-
  \texttt{{\rm{y}}}|^{s_{1}-n}{d}\texttt{{\rm{y}}}
  \\
       &\leq \left(\int_{\mathbb{B}_{r}\backslash {\mathbb{B}_{h}(\texttt{{\rm{x}}}_{0})}} [{d}(\texttt{{\rm{y}}})^{-s-s_{1}}]^{\frac{1+s_{1}+s}{2(s+s_{1})}}{\rm d}\texttt{{\rm{y}}}\right)^{\frac{2(s+s_{1})}{1+s_{1}+s}}
       \left(\int_{\mathbb{B}_{r}\backslash{\mathbb{B}_{h}(\texttt{{\rm{x}}}_{0})}}
       (|\texttt{{\rm{x}}}_{0}-\texttt{{\rm{y}}}|^{s_{1}-n})^{\frac{1+s_{1}+s}{1-s_{1}-s}}{\rm d}\texttt{{\rm{y}}}
       \right)^{\frac{1-s_{1}-s}{1+s_{1}+s}}
  \\
       &=(I_{2,1})^{\frac{2(s_{1}+s)}{1+s_{1}+s}}(I_{2,2})^{\frac{1-s_{1}-s}{1+s_{1}+s}}.
 \end{aligned}
\end{equation}
}
For $I_{2,1}$, we use the polar coordinates, hence
{
\begin{equation}
 \begin{aligned}
  I_{2,1} &\leq \omega_{n-1}\int_{0}^{r}(r-\rho)^{-\frac{1+s_{1}+s}{2}}\rho^{n-1} {\rm d}\rho
  \\
          &=\omega_{n-1} r^{n-{\frac{1+s_{1}+s}{2}}}\int_{0}^{1}(1-\rho)^{\frac{1+s_{1}+s}{2}}\rho^{n-1}{\rm d}\rho
          \\
          &=\omega_{n-1} r^{n-{\frac{1+s_{1}+s}{2}}}B\left(1-\frac{1+s_{1}+s}{2},n\right).
 \end{aligned}
\end{equation}
}
For $I_{2,2}$, we use polar coordinates $(\rho,\theta)$ with $\texttt{{\rm{x}}}_{0}$ being treated as the origin. Let us
consider a ray that originates from $\texttt{{\rm{x}}}_{0}$ and has angle $\theta$, which intersect $\partial \mathbb{B}_{r}$
on $\texttt{{\rm{z}}}$ (see Figure \ref{4fig1}). Then we define $r(\theta)=|\texttt{{\rm{z}}}-\texttt{{\rm{x}}}_{0}|$ and
the integral {$I_{2,2}$} can be rewritten as
{
\begin{equation}
 \begin{aligned}
  I_{2,2}&=\int_{\mathbb{B}_{r}\backslash{\mathbb{B}_{h}(\texttt{{\rm{x}}}_{0})}}
  |\texttt{{\rm{x}}}_{0}-\texttt{{\rm{y}}}|^{\frac{(1+s_{1}+s)(s_{1}-n)}{1-s_{1}-s}}
  {\rm d}\texttt{{\rm{y}}}
  \\
         &=\omega_{n-1}\int_{h}^{r(\theta)}\rho^{\frac{(1+s_{1}+s)(s_{1}-n)}{1-s_{1}-s}+n-1}{\rm d}\rho
  \\
         &\leq\frac{\omega_{n-1}}{\frac{(1+s_{1}+s)(s_{1}-n)}{1-s_{1}-s}+n}
         \left[(r+|\texttt{{\rm{x}}}_{0}|)^{\frac{(1+s_{1}+s)(s_{1}-n)}{1-s_{1}-s}+n}
         -\left(\frac{r-|\texttt{{\rm{x}}}_{0}|}{2}\right)^{\frac{(1+s_{1}+s)(s_{1}-n)}{1-s_{1}-s}+n}
         \right]
 \end{aligned}
\end{equation}
}
Bringing (\ref{estimate4green}), (\ref{est4I1}), and (\ref{est4I2}) into inequality (\ref{4expectedstepast})
and noticing $\kappa(n,s)>0$ yield that
{
\begin{equation*}
  \begin{aligned}
        &\kappa(n,s)B(s,\frac{n}{2})\frac{\omega_{n-1}}{2s}\left(\frac{1}{2}\right)^{2s}\mathbb{E}(l)
  \\
    \leq& C_{4}(n,s) (r-|\texttt{{\rm{x}}}_{0}|)^s\Bigg\{\frac{2^{s}\omega_{n-1}}{s_{1}}(r-|\texttt{{\rm{x}}}_{0}|)^{-s}
  +\left[\omega_{n-1}r^{n-{\frac{1+s_{1}+s}{2}}}B\left(1-\frac{1+s_{1}+s}{2},n\right)\right]
  ^{\frac{2(s_{1}+s)}{1+s_{1}+s}}
  \\
        &\times\left(\frac{\omega_{n-1}}{\frac{(1+s_{1}+s)(s_{1}-n)}{1-s_{1}-s}+n}
         \left[(r+|\texttt{{\rm{x}}}_{0}|)^{\frac{(1+s_{1}+s)(s_{1}-n)}{1-s_{1}-s}+n}
         -\left(\frac{r-|\texttt{{\rm{x}}}_{0}|}{2}\right)^{\frac{(1+s_{1}+s)(s_{1}-n)}{1-s_{1}-s}+n}
         \right]\right)^{\frac{1-s_{1}-s}{1+s_{1}+s}}\Bigg\}.
 \end{aligned}
\end{equation*}
}
Thus, we arrive at the following estimate:
{
  \begin{equation}
  \begin{aligned}
\mathbb{E}(l)\leq
             &2^{4s+1}{\pi^{\frac{n}{2}}}\frac{\Gamma(s+\frac{n}{2})\Gamma(s+1)}{\Gamma^{2}(\frac{n}{2})}C_{4}(n,s)
             \Bigg\{\frac{2^{s}}{s_{1}}+(r-|\texttt{{\rm{x}}}_{0}|)^s
  \\
       &\times\left[r^{n-{\frac{1+s_{1}+s}{2}}} B\left(1-\frac{1+s_{1}+s}{2},n\right)\right]^{\frac{2(s_{1}+s)}{1+s_{1}+s}}
       \Bigg(\frac{1}{\frac{(1+s_{1}+s)(s_{1}-n)}{1-s_{1}-s}+n}
  \\
       &\times\left[(r+|\texttt{{\rm{x}}}_{0}|)^{\frac{(1+s_{1}+s)(s_{1}-n)}{1-s_{1}-s}+n}
         -\left(\frac{r-|\texttt{{\rm{x}}}_{0}|}{2}\right)^{\frac{(1+s_{1}+s)(s_{1}-n)}{1-s_{1}-s}+n}
         \right]\Bigg)^{\frac{1-s_{1}-s}{1+s_{1}+s}}\Bigg\}.
 \end{aligned}
\end{equation}
Here $s_{1}=s$, for $s\in\left(0,\frac{1}{3}\right]$ and $s_{1}=\frac{1-s}{2}$ for $s\in\left(\frac{1}{3},1\right)$. So inequality (\ref{equ4.1}) is shown.\\
}
\indent We are now in position to bound the expected number of  steps $l$ before stopping. {Let $a(s)=-\left[\frac{(1+s_{1}+s)(s_{1}-n)}{1-s_{1}-s}+n\right]=\frac{s^2+4ns+2s+4n-3}{2(1-s)}$ and $\rho=\frac{|\texttt{{\rm{x}}}_{0}|}{r}$.} Then, {for $s\in(\frac{1}{3},1)$, we have}
\begin{equation}\label{mon_proof}
 \begin{aligned}
 &\mathbb{E}(l)\leq \max\{C_{5}(n,s),C_{6}(n,s)\}\Bigg\{\frac{2^{s+1}}{1-s}+(1-\rho)^s\left[B\left(
 \frac{1-s}{4},2\right)\right]^{\frac{2+2s}{3+s}}
 \\
  &\qquad \quad\times\left({-\frac{1}{a(s)}
\Big[(1+\rho)^{-a(s)}-2^{a(s)}({1-\rho})^{-a(s)}\Big]}\right)^{\frac{1-s}{3+s}}\Bigg\},
 \end{aligned}
\end{equation}
where
\begin{equation}
 \begin{aligned}
  C_{5}(n,s)&=\frac{2^{6s+1}}{B\left(s,\frac{n}{2}\right)},
  \\
  C_{6}(n,s)&=2^{4s+1}s\frac{\Gamma\left(s+\frac{n}{2}\right)\Gamma\left(\frac{n}{2}-s\right)}
  {\Gamma^{2}\left(\frac{n}{2}\right)}.
 \end{aligned}
\end{equation}
Let $v(s,\rho)$ be the right hand side of equation (\ref{mon_proof}). {Since we have
\begin{equation}
 \begin{aligned}
  &(1-\rho)^s\left({-\frac{1}{a(s)}\Big[(1+\rho)^{-a(s)}-2^{a(s)}({1-\rho})^{-a(s)}\Big]}\right)^{\frac{1-s}{3+s}}
 \\
 =&\left(\frac{1}{a(s)}\right)^{\frac{1-s}{3+s}}\left[2^{a(s)}(1-\rho)^{\frac{s(3+s)}{1-s}-a(s)}
 -(1+\rho)^{-a(s)}(1-\rho)^{\frac{s(3+s)}{1-s}}\right]^{\frac{1-s}{3+s}}
 \end{aligned}
\end{equation}
and $\frac{s(3+s)}{1-s}-a(s)<0$, it can be readily checked  that $v(s,\rho)$ is a
monotonically increasing function with respect to $\rho$ so that we discuss monotonicity for $s$. For $C_{5}(n,s)$,
it is easily obtain $C_{5}(n,s)$ monotonically increases with respect to $s$.}
Observe that  $C_{6}(n,s)$ can be written as follows.
\begin{equation}
 C_{6}(n,s)=2^{4s+1}sB\left(\frac{n}{2}+s,\frac{n}{2}-s\right)\frac{\Gamma(n)}{\Gamma^2\left(\frac{n}{2}\right)}.
\end{equation}
Differentiating Beta function with respect to $s$, we obtain
\begin{equation}
 \begin{aligned}
 &B'\left(\frac{n}{2}+s,\frac{n}{2}-s\right)=\int_{0}^{1}x^{s+\frac{n}{2}-1}(1-x)^{\frac{n}{2}-s-1}\ln\left(\frac{x}{1-x}\right)
 {\rm d}x
 \\
 &=\int_{0}^{\frac{1}{2}}x^{\frac{n}{2}-s-1}(1-x)^{\frac{n}{2}-s-1}\left[x^{2s}\ln\left(\frac{x}{1-x}\right)+(1-x)^{2s}
 \ln\left(\frac{1-x}{x}\right)\right]{\rm d}x.
 \\
 \end{aligned}
\end{equation}
Since the integrand in the square bracket is nonnegative, $B\left(\frac{n}{2}+s,\frac{n}{2}-s\right)$ increases monotonically
such that both $C_{6}(n,s)$ and $\max\{C_{5}(n,s),C_{6}(n,s)\}$ are monotonically increasing coefficients. Then we discuss the
second term in the brace of the equation (\ref{mon_proof}), which is denoted by $v_{1}(s,\rho)$. Taking logarithm of
$v_{1}(s,\rho)$ yields
\begin{equation}
 \begin{aligned}
 \ln(v_{1}(s,\rho))&=s\ln(1-\rho)+\frac{2+2s}{3+s}\ln\left[B\left(\frac{1-s}{4},2\right)\right]
 \\
 &+\frac{1-s}{3+s}\ln\left(\frac{1}{a(s)}\left[2^{a(s)}(1-\rho)^{-a(s)}-(1+\rho)^{-a(s)}\right]\right)
 \\
 &=v_{1,1}(s,\rho)+v_{1,2}(s,\rho),
 \end{aligned}
\end{equation}
where
\begin{equation}
 \begin{aligned}
  v_{1,1}(s,\rho)&=\frac{1-s}{3+s}\ln\left(\frac{1}{a(s)}\left[2^{a(s)}(1+\rho)^{a(s)}-(1-\rho)^{a(s)}\right]\right)
  \\
  v_{1,2}(s,\rho)&=\left(s-a(s)\frac{1-s}{3+s}\right)\ln(1-\rho)+\frac{2+2s}{3+s}\ln\left[B\left(\frac{1-s}{4},2\right)
  \right]-a(s)\frac{1-s}{3+s}\ln(1+\rho).
  \\
 \end{aligned}
\end{equation}
{After careful calculations, we obtain the derivative of  $v_{1,1}(s,\rho)$ with respect to $s$
\begin{equation}
 \begin{aligned}
    ({v_{1,1}(s,\rho)})'_{s}
    =&-\frac{4}{(s+3)^2}\ln\left(\frac{2(1-s)}{s^2+(4n+2)s+4n-3}\right)
  \\
     &-\frac{4}{(s+3)^2}\ln\left[(2+2\rho)^{a(s)}-(1-\rho)^{a(s)}\right]
     +\frac{s^2-2s-8n+1}{(s+3)(s^2+(4n+2)s+4n-3)}
  \\
     &+\frac{1-s}{s+3}a'(s)\frac{(2+2\rho)^{a(s)}\ln(2+2\rho)
     -(1-\rho)^{a(s)}\ln(1-\rho)}{(2+2\rho)^{a(s)}-(1-\rho)^{a(s)}}
  \\
 \geq& \frac{4}{(s+3)^2}\ln\left[\frac{4n-3}{2(1-s)}\right]
     -\frac{4}{(s+3)^2}a(s)\ln(2+2\rho)
  \\
    &+\frac{s^2-2s-8n+1}{(s+3)(s^2+(4n+2)s+4n-3)}
     +\frac{-s^2+2s+8n-1}{2(1-s)(s+3)}\ln(2+2\rho)
  \\
  \geq&\frac{4}{(s+3)^2}\ln\left[\frac{4n-3}{2(1-s)}\right]
      +\left(\frac{1}{2(1-s)}+\frac{4n}{(s+3)^2}\right)\ln(2+2\rho)
  \\
      &+\frac{s^2-2s-8n+1}{(s+3)(s^2+(4n+2)s+4n-3)}
  \\
  \geq& \frac{1}{4}\ln\left(\frac{12n-9}{4}\right)+\left(\frac{1}{2}+\frac{n}{4}\right)\ln2
  -\frac{72}{160}-\frac{26}{160n-\frac{200}{3}}\geq0,
 \end{aligned}
\end{equation}
since $\frac{s^2-2s-8n+1}{(s+3)(s^2+(4n+2)s+4n-3)}$ is an increasing function.
Thus $v_{1,1}(s,\rho)$ is  increasing with respect to $s$.
For $v_{1,2}(s,\rho)$, we have
\begin{equation}
 \begin{aligned}
  v_{1,2}(s,\rho)
  =&\left(s-a(s)\frac{1-s}{3+s}\right)\ln(1-\rho)+\frac{2+2s}{3+s}\left(\ln\Gamma(n)
  +\ln\prod_{k=0}^{n-1}\frac{1}{1-s+4k}\right)
  \\
  &+\left(\frac{2+2s}{3+s}\ln(4^{n})-a(s)\frac{1-s}{s+3}\ln(1+\rho)\right)
  \\
  :=&A_1+A_2+A_3.
 \end{aligned}
\end{equation}
It is easily to obtain $A_1$ and $A_2$ are increasing with respect to $s$. We find
\begin{equation}
 \begin{aligned}
  A_3=\left[\frac{8ns+8n}{s+3}-a(s)\frac{1-s}{s+3}\right]\ln2
    +a(s)\frac{1-s}{s+3}\left[\ln2-\ln(1+\rho)\right]
 \end{aligned}
 \end{equation}
Since $\left[\frac{8ns+8n}{2(s+3)}-a(s)\frac{1-s}{s+3}\right]'=\frac{-s^2-6s-9+8n}{2(s+3)^2}\geq0$
and $\left[a(s)\frac{1-s}{s+3}\right]'=\frac{s^2+6s+8n+9}{2(s+3)^2}>0$, $A_3$ is  increasing with respect to $s$
Combining the monotonicity of $v_1(s,\rho)$ and $\max\{C_{5}(n,s),C_{6}(n,s)\}$ with respect to $s$,
$v(s,\rho)$ will when $s$ grows for $s>\frac{1}{3}$. Similarly, we can derive the same result for $s\leq \frac{1}{3}$. Thus we obtain the desired result.}} \qedsymbol

When $s\rightarrow1$, the upper bounds for the expected stopping steps can not work, since fractional Laplacian
degenerates into the classical Laplace operator so that the L\'{e}vy flight becomes Brownian motion. Though the Brownian motion originated at $\texttt{{\rm{x}}}$ will reach boundary $\mathbb{B}_{r}$ in the probability sense, the expected stopping steps are infinite. We also have the following theorem.
\begin{theorem}
When $s\rightarrow1$, $G(\texttt{{\rm{x}}},\texttt{{\rm{y}}})$ in (\ref{Greenfunction}) is the Green function for the
classical Laplace equation with ball boundary.
\end{theorem}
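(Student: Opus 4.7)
The plan is to pass to the limit $s\to 1^{-}$ factor-by-factor in the formula (\ref{Greenfunction}) for $n\geq 2$ and then identify the result with the classical Dirichlet Green function of $-\Delta$ on $\mathbb{B}_{r}$.  The constant $\kappa(n,s)=\frac{\Gamma(n/2)}{2^{2s}\pi^{n/2}\Gamma(s)^{2}}\to \frac{\Gamma(n/2)}{4\pi^{n/2}}$, and the prefactor $|x-y|^{2s-n}\to|x-y|^{2-n}$.  Because $t^{s-1}\to 1$ uniformly on any compact subinterval of $(0,r^{*}]$, dominated convergence gives
\begin{equation*}
\lim_{s\to 1^{-}}\int_{0}^{r^{*}(x,y)}\!\frac{t^{s-1}}{(t+1)^{n/2}}\,dt
=\int_{0}^{r^{*}(x,y)}\!\frac{dt}{(t+1)^{n/2}},
\end{equation*}
which evaluates to $\frac{2}{n-2}\bigl[1-(r^{*}+1)^{1-n/2}\bigr]$ for $n\geq 3$ and to $\ln(r^{*}+1)$ for $n=2$.

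Next I would establish the algebraic identity
\begin{equation*}
r^{*}(x,y)+1=\frac{|x|^{2}\,|y-x^{\ast}|^{2}}{r^{2}\,|x-y|^{2}},\qquad x^{\ast}=\frac{r^{2}x}{|x|^{2}},
\end{equation*}
by expanding the numerator $(r^{2}-|x|^{2})(r^{2}-|y|^{2})+r^{2}|x-y|^{2}$ and recognising the result as $r^{4}-2r^{2}x\!\cdot\! y+|x|^{2}|y|^{2}=|x|^{2}|y-x^{\ast}|^{2}$ using $|x^{\ast}|^{2}=r^{4}/|x|^{2}$ and $x\!\cdot\! x^{\ast}=r^{2}$.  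This is the only step that takes a small amount of calculation.

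Substituting the identity back, for $n\geq 3$ the limit reads
\begin{equation*}
\lim_{s\to 1^{-}}G(x,y)=\frac{\Gamma(n/2)}{2(n-2)\pi^{n/2}}\Bigl[|x-y|^{2-n}-\bigl(\tfrac{|x|\,|y-x^{\ast}|}{r}\bigr)^{2-n}\Bigr],
\end{equation*}
and since $\omega_{n-1}=2\pi^{n/2}/\Gamma(n/2)$ the overall prefactor is exactly $\frac{1}{(n-2)\omega_{n-1}}$, matching the classical Dirichlet Green function on $\mathbb{B}_{r}$ written in terms of the Kelvin image.  For $n=2$, $\kappa(2,s)\to\frac{1}{4\pi}$, $|x-y|^{2s-n}\to 1$, and the integral converges to $\ln(r^{*}+1)=2\ln\bigl(\frac{|x|\,|y-x^{\ast}|}{r\,|x-y|}\bigr)$, giving $\frac{1}{2\pi}\ln\bigl(\frac{|x|\,|y-x^{\ast}|}{r\,|x-y|}\bigr)$, the standard $2$D Dirichlet Green function.

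The main obstacle is just the algebraic identification of $r^{*}+1$ with the Kelvin-type expression $|x|^{2}|y-x^{\ast}|^{2}/(r^{2}|x-y|^{2})$; once this is in hand, all three regimes ($n\geq 3$, $n=2$) collapse to the standard Green-function formulas.  No subtle analytic issue arises, since for fixed $x\neq y$ in $\mathbb{B}_{r}$ the integrand is bounded by $(t+1)^{-n/2}$ uniformly in $s\in[1/2,1]$, so the convergence of the integral is uniform on compact subsets of $\{(x,y)\in\mathbb{B}_{r}\times\mathbb{B}_{r}:x\neq y\}$.
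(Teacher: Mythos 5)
Your proposal is correct and follows essentially the same route as the paper: evaluate $\int_{0}^{r^{*}}(1+t)^{-n/2}\,{\rm d}t$ explicitly at $s=1$ and use the identity $(r^{2}-|x|^{2})(r^{2}-|y|^{2})+r^{2}|x-y|^{2}=r^{4}+|x|^{2}|y|^{2}-2r^{2}x\cdot y$ to recognise the classical Green function in both the $n=2$ and $n\geq 3$ cases. The only differences are cosmetic and in your favor: you justify the interchange of limit and integral by dominated convergence and explicitly verify the normalization $\frac{2\kappa(n,1)}{n-2}=\frac{1}{(n-2)\omega_{n-1}}$, whereas the paper simply substitutes $s=1$ and leaves the constant check implicit.
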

\begin{proof}
 When $n=2$, we have
 \begin{equation}
 \begin{aligned}
  G(\texttt{{\rm{x}}},\texttt{{\rm{y}}})&=\kappa(2,1)\int_{0}^{r^{*}(\texttt{{\rm{x}}},\texttt{{\rm{y}}})}
  \frac{1}{1+t}{\rm d}t
  =\kappa(2,1)\log\left(\frac{(r^2-|\texttt{{\rm{x}}}|^2)(r^2-|\texttt{{\rm{y}}}|^2)+r^2|\texttt{{\rm{x-y}}}|^2}
  {r^2|\texttt{{\rm{x}}}-\texttt{{\rm{y}}}|^2}\right)
  \\
  &=\frac{1}{4\pi}\log\left(\frac{r^4+|\texttt{{\rm{x}}}|^2|\texttt{{\rm{y}}}|^2-2r^2
  |\texttt{{\rm{x}}}||\texttt{{\rm{y}}}|\cos\langle\texttt{{\rm{x}}},\texttt{{\rm{y}}}\rangle}{r^2|
  \texttt{{\rm{x}}}-\texttt{{\rm{y}}}|^2}\right).
 \end{aligned}
 \end{equation}
   When $n\geq 3$, we have
 \begin{equation}
  \begin{aligned}
  G(\texttt{{\rm{x}}},\texttt{{\rm{y}}})&=\kappa(n,1)|\texttt{{\rm{x}}}-\texttt{{\rm{y}}}|^{2-n}
  \int_{0}^{r^{*}(\texttt{{\rm{x}}},\texttt{{\rm{y}}})}\frac{1}{(1+t)^{\frac{n}{2}}}{\rm d}t
  \\
  &=\frac{2\kappa(n,1)}{n-2}|\texttt{{\rm{x}}}-\texttt{{\rm{y}}}|^{2-n}\left[
  1-\left(\frac{(r^2-|\texttt{{\rm{x}}}|^2)(r^2-|\texttt{{\rm{y}}}|^2)}{r^2|\texttt{{\rm{x}}}-\texttt{{\rm{y}}}|^2}
  +1\right)^{-\frac{n-2}{2}}\right]
  \\
  &=\frac{2\kappa(n,1)}{n-2}\left[\frac{1}{(|\texttt{{\rm{x}}}|^2+|\texttt{{\rm{y}}}|^2-2|\texttt{{\rm{x}}}|
  |\texttt{{\rm{y}}}|\cos{\langle\texttt{{\rm{x}}},\texttt{{\rm{y}}}\rangle})^{\frac{n-2}{2}}}-
  \frac{r^{n-2}}{(r^4+|\texttt{{\rm{x}}}|^2|\texttt{{\rm{y}}}|^2)-2r^2\cos{\langle\texttt{{\rm{x}}},
  \texttt{{\rm{y}}}\rangle}^{\frac{n-2}{2}}}\right]
  \end{aligned}
 \end{equation}
\end{proof}
\begin{remark}
 When $f(\texttt{{\rm{x}}})=(d(\texttt{{\rm{x}}}))^{-2s}$ which doesn't satisfy the condition in Theorem
 \ref{integralrepre}, we know that $u(\texttt{{\rm{x}}})$ in (\ref{solutionfor4.1}) is still the solution
 of the problem (\ref{eq:4possionball}) {in the sense of mild solution}, while $u(\texttt{{\rm{x}}})$ may
 not satisfy the regularity in Theorem \ref{integralrepre}.
\end{remark}

\section{Numerical examples} \label{sec:num-exm}

In this section, numerical examples are carried out by using Scheme I (quadrature method \ref{righthandside1} and
\ref{boundarycondition1}) discussed in Section 2
and Algorithm \ref{algo:mod-walk-on-spheres} introduced in Section \ref{sec:mod-walk-on-spheres} on an i5-8250U CPU.\

 In the experiments, we consider two special cases of
equation (\ref{eq:possionproblem}): the homogeneous equation with inhomogeneous boundary value condition, and nonconstant
source term with homogeneous boundary value condition.

We set step sizes $h=h_{\rho}=h_{\theta}=h_{t}=h_{r},\,r=1,2,\ldots,n-2$. In addition, $E(h)=|u_{2h}-u_{h}|$ denotes
posteriori error estimates in Scheme I and $E$ denotes the absolute error in the modified walk-on-sphere method. Then
the convergence order is given by $rate=\log_{2}\frac{E(2h)}{E(h)}$.

\begin{example}\label{example1}
Let $\Omega$ be a unit ball in $\mathbb{R}^{n}$ centered at the origin
\begin{equation}\label{eq:5possionproblem1}
\left\{
\begin{aligned}
 &(-\triangle)^{s}u(\texttt{{\rm{x}}})=0,\, &&  \texttt{{\rm{x}}}\in\Omega,
 \\
 &u(\texttt{{\rm{x}}})=g(\texttt{{\rm{x}}}),\, && \texttt{{\rm{x}}}\in \mathbb{R}^{n}\backslash \Omega,
\end{aligned}
\right.
\end{equation}
where  $
  g(\texttt{{\rm{x}}})=\exp(-|\texttt{{\rm{x}}}-\texttt{{\rm{x}}}'|^2).$
\end{example}

 In this example, we take $s=0.25$, $0.5$, $0.75$ and set different step sizes
$\frac{1}{32}$, $\frac{1}{64}$, $\frac{1}{128}$, $\frac{1}{256}$, $\frac{1}{512}$ in two spacial dimensions and
$\frac{1}{8}$, $\frac{1}{16}$, $\frac{1}{32}$, $\frac{1}{64}$, $\frac{1}{128}$ in three spacial dimensions for Scheme I.
For the modified walk-on-sphere method, the number of samples are set by $1000$, $10000$, $100000$. We evaluate $u(0.6,0.6)$ with
$\texttt{{\rm{x}}}'=(3,0)$ in two spacial dimensions. The numerical results of Scheme I and modified walk-on-sphere method are
presented in Tables $1$ and $2$, respectively.

\begin{table}[!htbp]
 \centering
 \caption{Numerical results of Example \ref{example1} using Scheme I in 2D.}\label{exmp1tab1}
 \begin{tabular}{cccccc}
   \hline
    $s$      &$\frac{1}{h}$   &approximation     &$E(h)$         &rate   &CPU time(secs.) \\
   \hline
                  &32              &0.0234077           &5.6370E-06     &$-$      &0.0631          \\
    $s=0.25$      &64              &0.0234021           &8.9306E-07     &2.6581   &0.1039           \\
                  &128             &0.0234012           &2.2166E-07     &2.0104   &0.3229           \\
                  &256             &0.0234009           &5.5441E-08     &1.9994   &1.1362          \\
                  &512             &0.0234009           &1.3863E-08     &1.9996   &4.4059           \\
   \hline
                  &32              &0.0187671           &8.2695E-06     &$-$      &0.0595          \\
    $s=0.50$      &64              &0.0187558           &4.6391E-07     &4.1559   &0.1041        \\
                  &128             &0.0187583           &1.1117E-07     &2.0612   &0.3031           \\
                  &256             &0.0187582           &2.8056E-08     &1.9863   &1.0778          \\
                  &512             &0.0187582           &7.0603E-09     &1.9905   &4.1187           \\
   \hline
                  &32              &0.0099238           &1.5558E-05     &$-$      &0.0599          \\
    $s=0.75$      &64              &0.0099082           &3.7635E-07     &5.3694   &0.1407        \\
                  &128             &0.0099079           &8.3237E-08     &2.1768   &0.7417           \\
                  &256             &0.0099078           &2.1912E-08     &1.9255   &1.4891          \\
                  &512             &0.0099077           &5.7075E-09     &1.9407   &5.5057           \\
   \hline
 \end{tabular}
\end{table}
\begin{table}[!htbp]
 \centering
 \caption{Numerical results of Example \ref{example1} using modified walk-on-sphere method in 2D.}\label{exmp1tab2}
 \begin{tabular}{cccccc}
   \hline
    $s$      &samples    &approxiamtion       &average no. steps  &variance      &CPU time(secs.) \\
   \hline
                  &1000       &0.0230409             &1.7470             &9.4958E-03    &0.0078       \\
    $s=0.25$      &10000      &0.0231043             &1.7515             &9.0476E-03    &0.0835       \\
                  &100000     &0.0234345             &1.7543             &8.4807E-03    &0.6804       \\
   \hline
                  &1000       &0.0185969             &2.9460             &1.0672E-02    &0.0171       \\
    $s=0.50$      &10000      &0.0188054             &3.0495             &5.8763E-03    &0.1179       \\
                  &100000     &0.0187276             &3.0142             &5.7382E-03    &1.1908       \\
   \hline
                  &1000       &0.0098083             &6.4760             &3.2685E-03    &0.0295       \\
    $s=0.75$      &10000      &0.0097991             &6.2111             &2.1455E-03    &0.2601       \\
                  &100000     &0.0098974             &6.1990             &1.7594E-03    &2.5103       \\
   \hline
 \end{tabular}
\end{table}
  Table \ref{exmp1tab1} shows the convergent order coincides with the theoretical analysis. It can be seen from Table
  \ref{exmp1tab2} that the simulation results by the Monte Carlo method are close to the approximations in Table
  \ref{exmp1tab1}.

Next, we evaluate $u(0.5,0.5,0.5)$ with $\texttt{{\rm{x}}}'=(3,0,0)$ in three spacial dimensions. Numerical results
are given in Tables \ref{exmp1tab3} and \ref{exmp1tab4}. Table \ref{exmp1tab3} shows that although Scheme I achieves
same convergent order while the CPU time in three spacial dimensions grows a lot. Compared with the Scheme I,
modified walk-on-sphere method presented in Table \ref{exmp1tab4} saves much more time.

\begin{table}[!htbp]
 \centering
 \caption{Numerical results of Example \ref{example1} using Scheme I in 3D.}\label{exmp1tab3}
 \begin{tabular}{cccccc}
   \hline
    $s$      &$\frac{1}{h}$   &approximation     &$E(h)$         &rate   &CPU time(secs.) \\
   \hline
                  &8              &0.0084161           &7.3967E-05     &$-$      &0.0983          \\
    $s=0.25$      &16             &0.0079807           &9.5757E-05     &3.4423   &0.2698          \\
                  &32             &0.0080208           &2.1302E-05     &2.1448   &1.3501          \\
                  &64             &0.0080298           &4.9834E-06     &2.0779   &14.830          \\
                  &128            &0.0080320           &1.2291E-06     &2.0155   &103.99          \\
   \hline
                  &8              &0.0066376           &7.3967E-05     &$-$      &0.0752          \\
    $s=0.50$      &16             &0.0065636           &9.5757E-06     &-0.3725   &0.2861          \\
                  &32             &0.0066594           &2.1377E-05     &2.1684   &1.2785          \\
                  &64             &0.0066807           &4.6231E-06     &2.0957   &9.4483          \\
                  &128            &0.0066856           &1.0635E-06     &2.0195   &78.617          \\
   \hline
                  &8              &0.0033824           &2.7559E-04     &$-$      &0.0792          \\
    $s=0.75$      &16             &0.0036580           &1.5625E-04     &0.8185   &0.2571          \\
                  &32             &0.0038143           &3.4898E-05     &2.1627   &1.3584          \\
                  &64             &0.0038491           &8.0415E-06     &2.1176   &10.167          \\
                  &128            &0.0038572           &1.9762E-06     &2.0247   &85.237          \\
   \hline
 \end{tabular}
\end{table}
\begin{table}[!htbp]
 \centering
 \caption{Numerical results of Example \ref{example1} for modified walk-on-sphere method in 3D.}\label{exmp1tab4}
 \begin{tabular}{cccccc}
   \hline
    $s$      &samples    &approximation       &average no. steps  &variance       &CPU time(secs.) \\
   \hline
                  &1000       &0.0083326            &1.8890             &2.2310E-03    &0.0340       \\
    $s=0.25$      &10000      &0.0080532            &1.9248             &1.9605E-03    &0.1520       \\
                  &100000     &0.0080475            &1.9259             &1.8473E-03    &1.3697       \\
   \hline
                  &1000       &0.0068445            &3.9280             &3.1425E-03    &0.0476       \\
    $s=0.50$      &10000      &0.0067445            &3.8514             &1.3806E-03    &0.3632       \\
                  &100000     &0.0066647            &3.8748             &1.2729E-03    &2.7069       \\
   \hline
                  &1000       &0.0039063            &10.327             &1.8671E-03    &0.0936       \\
    $s=0.75$      &10000      &0.0037649            &10.010             &5.4232E-04    &0.7213       \\
                  &100000     &0.0038088            &10.110             &4.1431E-04    &7.4822       \\
   \hline
 \end{tabular}
\end{table}

\begin{example}\label{example2}
Consider equation (\ref{eq:5possionproblem1}) with $\Omega$ being a unit ball in $\mathbb{R}^{n}$ centered at the origin,
\begin{equation}
 g(\texttt{{\rm{x}}})=\left\{
 \begin{aligned}
 &\frac{1}{\pi}\log{|x-x'|}, \, &&n=1,\\
   &a(n,s)|\texttt{{\rm{x}}}-\texttt{{\rm{x}}}'|^{-n+2s},\, &&n \geq 2,
  \end{aligned}
  \right.
 \end{equation}
and
\begin{equation}
 \begin{aligned}
 &a(n,s)=\frac{\Gamma(\frac{n}{2}-s)}{2^{2s}\pi^{\frac{n}{2}}\Gamma(s)}.
 \end{aligned}
\end{equation}
Here $g(\texttt{{\rm{x}}})$ is just Green's function and $\texttt{{\rm{x}}}'\in \mathbb{R}^{n}\backslash \Omega$,
where $\texttt{{\rm{x}}}'$ can be any points out of the unit ball.
The exact solution to (\ref{example1}) is given below \rm{\cite{Bucur2016}},
\begin{equation}
 u(\texttt{{\rm{x}}})=\left\{
  \begin{aligned}
     &\frac{1}{\pi}\log{|x-x'|} ,\, &&n=1,\\
     &a(n,s)|\texttt{{\rm{x}}}-\texttt{{\rm{x}}}'|^{-n+2s} ,\, &&n \geq 2.
  \end{aligned}
   \right.
\end{equation}
\end{example}
We use the modified walk-on-sphere method to simulate the solution.
The number of samples are set by $1000$, $10000$, and $100000$ for modified walk-on-sphere method. Though $g(\texttt{{\rm{x}}})$
does not satisfy the condition in Theorem \ref{integralrepre}, modified walk-on-sphere method still takes effect since the
representation formula is finite. The value of $u(\frac{1}{2})$ with $x'=2$ in one spacial dimension is showed in
Table \ref{exmp2tab1_1}.
We also evaluate $u(0.6,0.6)$ with $\texttt{{\rm{x}}}'=(\sqrt{2},\sqrt{2})$ in two spacial dimensions.
Table \ref{exmp2tab1} gives the numerical results. The average number of step in Table \ref{exmp2tab1} is
basically the same as that in Table \ref{exmp1tab2}, indicating that the average number of step
isn't related to $g(\texttt{{\rm{x}}})$. When $10^{5}$ samples are used in modified walk-on-sphere method, Figure \ref{5fig1}
shows that the larger the $s$ is, the smaller the errors will be, which is caused by the singularity of $g(x)$. And the average
number of steps will increase when $s$ tends to 1, which explains why the CPU time will become longer when $s$ grows.
\begin{table}[!htbp]
 \centering
 \caption{Numerical results of Example \ref{example2} using modified walk-on-sphere method method in 1D.}\label{exmp2tab1_1}
 \begin{tabular}{cccccc}
   \hline
    $s$      &samples         &$E$       &average no. steps  &variance      &CPU time(secs.) \\
   \hline
                  &1000       &8.8457E-03   &1.2910             &3.1650E-01    &0.0129       \\
    $s=0.25$      &10000      &2.2407E-03   &1.2929             &2.4775E-01    &0.0912       \\
                  &100000     &8.4281E-04   &1.2915             &2.3637E-01    &0.8159       \\
   \hline
                  &1000       &6.9838E-03   &1.5000             &2.8124E-01    &0.0150       \\
    $s=0.50$      &10000      &3.7475E-03   &1.5290             &2.7348E-01    &0.0982       \\
                  &100000     &4.8992E-04   &1.5246             &2.6451E-01    &0.8901       \\
   \hline
                  &1000       &7.8929E-03   &1.7200             &3.7211E-01    &0.0253       \\
    $s=0.75$      &10000      &3.1675E-03   &1.7069             &3.7141E-01    &0.1236       \\
                  &100000     &2.8304E-05   &1.6879             &3.5764E-01    &1.1568       \\
    \hline
 \end{tabular}
\end{table}
\begin{table}[!htbp]
 \centering
 \caption{Numerical results of Example \ref{example2} using modified walk-on-sphere method in 2D.}\label{exmp2tab1}
 \begin{tabular}{cccccc}
   \hline
    $s$      &samples         &$E$       &average no. steps  &variance      &CPU time(secs.) \\
   \hline
                  &1000       &7.7764E-03   &1.7220             &1.5125E-01    &0.0191       \\
    $s=0.25$      &10000      &3.3265E-03   &1.7338             &3.3816E-02    &0.1893       \\
                  &100000     &1.7565E-03   &1.7338             &1.2221E-02    &1.6108       \\
   \hline
                  &1000       &2.3412E-03   &3.0770             &1.4768E-02    &0.0292       \\
    $s=0.50$      &10000      &1.3937E-04   &3.0016             &8.2579E-03    &0.2636       \\
                  &100000     &7.8162E-05   &3.0004             &6.7842E-03    &2.5823       \\
   \hline
                  &1000       &9.5021E-03   &6.5530             &4.0156E-02    &0.0813       \\
    $s=0.75$      &10000      &1.0741E-04   &6.2859             &3.6001E-02    &0.5657       \\
                  &100000     &2.2905E-05   &6.2344             &2.9957E-02    &5.5934       \\
    \hline
 \end{tabular}
\end{table}
\begin{figure}[!htbp]
    \begin{minipage}[t]{0.5\linewidth}
    \centering
    \includegraphics[height=6cm,width=8cm]{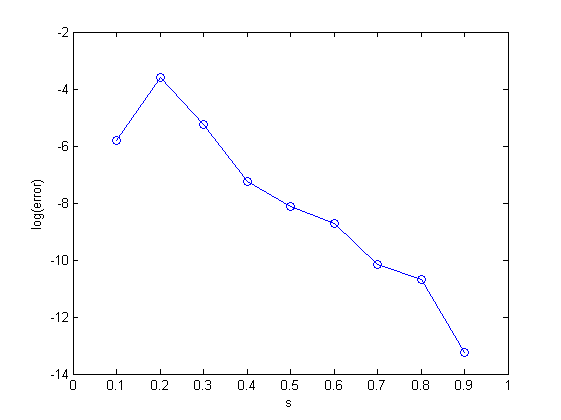}
    \end{minipage}
    \begin{minipage}[t]{0.5\linewidth}
    \centering
    \includegraphics[height=6cm,width=8cm]{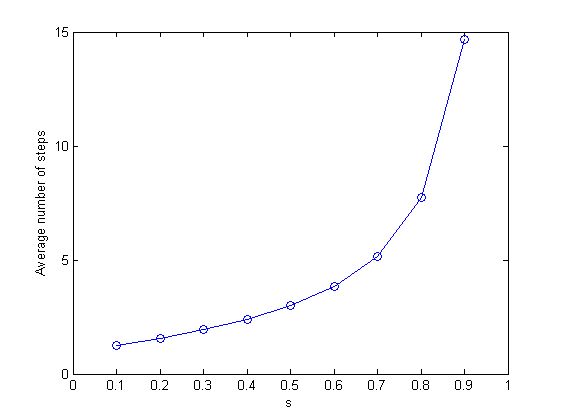}
    \end{minipage}
 \captionsetup{font={footnotesize}}
 \caption{Result for Example \ref{example1} with the  walk-one-spheres  method based on $10^5$ samples in two dimensions.
 When $s$ becomes bigger, the error will be smaller whereas the average number of steps will increase.}\label{5fig1}
\end{figure}

Next, we evaluate $u(0.5,0.5,0.5)$ with $\texttt{{\rm{x}}}'=(\frac{2\sqrt{3}}{3},\frac{2\sqrt{3}}{3},\frac{2\sqrt{3}}{3})$
in three spacial dimensions. The numerical results are given in Table \ref{exmp2tab2}. The consuming time does not grow
too much as the dimension increases. Figure \ref{5fig2} also indicates the relation between the average number of step
and index $s$ remains, which coincides with theoretical analysis. 

\begin{table}[!htbp]
 \centering
 \caption{Numerical results of Example \ref{example2} for modified walk-on-sphere method in 3D.}\label{exmp2tab2}
 \begin{tabular}{cccccc}
   \hline
    $s$      &samples         &$E$       &average no. steps  &variance      &CPU time(secs.) \\
   \hline
                  &1000       &5.0191E-03   &1.8500             &1.7617E-04    &0.0213       \\
    $s=0.25$      &10000      &9.5331E-04   &1.9544             &1.2973E-04    &0.1645       \\
                  &100000     &3.9997E-04   &1.9544             &3.5799E-05    &1.5001       \\
   \hline
                  &1000       &2.1632E-03   &4.0730             &8.7991E-05    &0.0318       \\
    $s=0.50$      &10000      &6.4897E-04   &3.8937             &5.2222E-05    &0.2899       \\
                  &100000     &2.4870E-05   &3.8930             &2.2081E-05    &2.7808       \\
   \hline
                  &1000       &3.1106E-03   &9.8970             &3.3897E-05    &0.1346       \\
    $s=0.75$      &10000      &5.4258E-05   &10.181             &2.6497E-05    &0.8003       \\
                  &100000     &3.6118E-05   &10.200             &2.0900E-05    &7.4063       \\
   \hline
 \end{tabular}
\end{table}

\begin{figure}[!htbp]   
    \begin{minipage}[t]{0.5\linewidth}
    \centering
    \includegraphics[height=6cm,width=8cm]{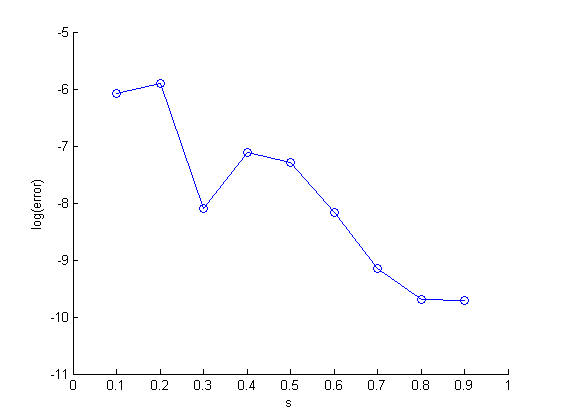}
    \end{minipage}
    \begin{minipage}[t]{0.5\linewidth}
    \centering
    \includegraphics[height=6cm,width=8cm]{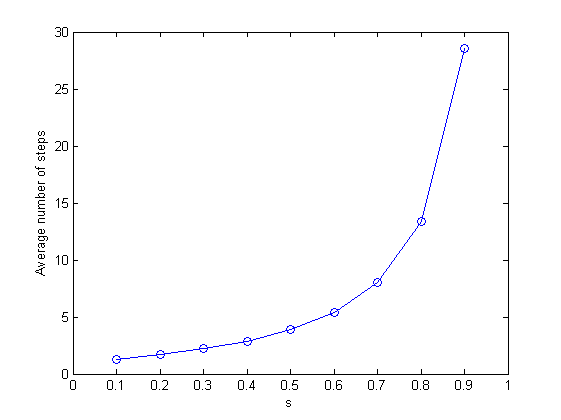}
    \end{minipage}
 \captionsetup{font={footnotesize}}
 \caption{Result for Example \ref{example1} with the  walk-one-spheres  method based on $10^5$ samples in three dimensions.
The error and average number of steps keep the same tendency as those in 2D when $s$ tends to $1$ from $0$.
}\label{5fig2}
\end{figure}
For higher dimensional cases, we first evaluate $u(\texttt{{\rm{x}}})$ with $\texttt{{\rm{x}}}=\frac{1}{4}*\textbf{ones}(4)$ and $\texttt{{\rm{x}}}'=\textbf{ones}(4)$ in four spacial dimensions and $u(\texttt{{\rm{x}}})$ with $\texttt{{\rm{x}}}=
\frac{1}{5}*\textbf{ones}(5)$ and $\texttt{{\rm{x}}}'=\frac{2\sqrt{5}}{5}*\textbf{ones}(5)$ in five spacial dimensions,
where $\textbf{ones}(n)$ is an $n$-dimensional vector with all ones. The number of samples are set by $10^5$. Since when
$s\in(0,\frac{1}{2})$, $g(x)$ has singularity, we mainly give the numerical results for $s\in(\frac{1}{2},1)$ in Table {\ref{exmp2tab3}}.

\begin{table}[!htbp]
 \centering
 \caption{Numerical results of Example \ref{example2} for modified walk-on-sphere method in 4D and 5D by $10^5$ samples.}\label{exmp2tab3}
 \begin{tabular}{cccccc}
   \hline
    $s$            &$n$      &$E$       &average no. steps  &variance      &CPU time(secs.) \\
   \hline
    $s=0.25$      &$n=4$      &1.1203E-02   &1.5387             &2.1507E-01    &5.8991       \\
    $s=0.50$      &$n=4$      &2.0822E-03   &3.3463             &8.8047E-02    &12.082       \\
    $s=0.60$      &$n=4$      &1.4179E-03   &5.0610             &4.8773E-02    &19.741       \\
    $s=0.70$      &$n=4$      &6.4099E-04   &8.2784             &1.9234E-02    &32.903       \\
    $s=0.80$      &$n=4$      &3.4514E-04   &15.663             &8.9541E-03    &60.322       \\
    $s=0.90$      &$n=4$      &2.8155E-04   &38.629             &2.9072E-03    &208.21       \\
   \hline
    $s=0.25$      &$n=5$      &1.7871E-03   &1.5178             &4.6222E-02    &8.7034       \\
    $s=0.50$      &$n=5$      &1.6622E-03   &3.4818             &2.8613E-02    &19.241       \\
    $s=0.60$      &$n=5$      &8.2384E-04   &5.4826             &1.6533E-02    &30.330       \\
    $s=0.70$      &$n=5$      &6.3443E-04   &9.4176             &8.4255E-03    &52.100       \\
    $s=0.80$      &$n=5$      &3.4322E-04   &18.598             &2.2152E-03    &100.84       \\
    $s=0.90$      &$n=5$      &2.9972E-04   &48.296             &1.0710E-03    &271.01       \\
   \hline
 \end{tabular}
\end{table}
We then evaluate $u(\texttt{{\rm{x}}})$ with $\texttt{{\rm{x}}}=\frac{1}{10}*\textbf{ones}(10)$ and $\texttt{{\rm{x}}}'=
\frac{\sqrt{10}}{5}*\textbf{ones}(10)$ in ten spacial dimensions. The number of samples are set by $10^5$ and the numerical
results is given in Table \ref{exmp2tab4}. Compared with the computational time in lower dimensions, the time in ten dimensions
only increases in multiple, which shows the efficiency of the algorithm.
\begin{table}[!htbp]
 \centering
 \caption{Numerical results of Example \ref{example2} for modified walk-on-sphere method in 10D by $10^5$ samples.}\label{exmp2tab4}
 \begin{tabular}{cccccc}
   \hline
    $s$            &$n$      &$E$       &average no. steps  &variance      &CPU time(secs.) \\
   \hline
    $s=0.25$      &$n=10$      &1.1617E-03   &1.4501             &2.4291E-01    &23.991       \\
    $s=0.50$      &$n=10$      &6.8564E-04   &3.6944             &7.5413E-02    &52.489       \\
    $s=0.60$      &$n=10$      &5.3812E-04   &6.4110             &5.2138E-04    &94.023       \\
    $s=0.70$      &$n=10$      &2.9153E-04   &12.266             &2.4223E-04    &178.82       \\
    $s=0.80$      &$n=10$      &2.4108E-04   &27.366             &6.1517E-05    &395.73       \\
    $s=0.90$      &$n=10$      &1.2341E-04   &80.842             &5.4785E-05    &1168.0       \\
   \hline
 \end{tabular}
\end{table}

\begin{example}\label{example3}
Consider the following fractional Poisson equation with vanishing Dirichlet boundary condition
 \begin{equation}\label{eq:5possionproblem2}
  \left\{
  \begin{aligned}
    &(-\Delta)^{s}u(\texttt{{\rm{x}}})=f(\texttt{{\rm{x}}}) ,\,&&  \texttt{{\rm{x}}}\in\Omega,
    \\
  &u(\texttt{{\rm{x}}})=0 ,\, &&  \texttt{{\rm{x}}}\in \mathbb{R}^{n}\backslash \Omega,
  \end{aligned}
  \right.
 \end{equation}
 where $\Omega$ is a unit ball in $\mathbb{R}^{n}$ and
 \begin{equation}
  f(\texttt{{\rm{x}}})=\left\{
  \begin{aligned}
  &\Gamma\left(\frac{s}{2}+2\right)x ,\, &&n=1,\\
  &2^{2s}\Gamma(2+s)\Gamma \left(\frac{n}{2}+s\right){\Gamma\left(\frac{n}{2}\right)}^{-1}
\left(1-\left(1+\frac{2s}{n}\right)|\texttt{{\rm{x}}}|^2\right), &&n \geq 2.
  \end{aligned}
  \right.
 \end{equation}
 The exact solution to (\ref{eq:5possionproblem2}) is given in \rm\cite{Dyda2012}
 \begin{equation}
  u(\texttt{{\rm{x}}})=\left\{
   \begin{aligned}
     &x(1-x^2)^s ,\, &&n=1,\\
     &(1-|\texttt{{\rm{x}}}|^2)^{1+s},\, &&n \geq 2.
   \end{aligned}
   \right.
 \end{equation}
\end{example}

We use the modified walk-on-sphere method to simulate the solution and the number of samples are set by $1000$, $10000$,
and $100000$ for modified walk-on-sphere method. We evaluated $u(\frac{1}{2})$ in one spacial dimension, which is showed
in Table \ref{exmp3tab1_1}. Since we need to approximate the integral or the hypergeometric function when $\frac{1}{2}<s<1$,
the computational time is a bit longer. {$u(0.6,0.6)$ is also evaluated in two spacial dimensions by both Scheme I
(\ref{boundarycondition1}) and the modified walk-on-sphere method. The numerical results are presented
in Tables \ref{exmp3tab1} and \ref{exmp3tab12}. It is obvious that Scheme I has bigger errors and costs more computational
time.} Comparing Table \ref{exmp1tab2} and Table \ref{exmp2tab1}, it is obvious that the average number of step is independent
of $f(\texttt{{\rm{x}}})$ and $g(\texttt{{\rm{x}}})$. Figure \ref{5fig3} shows that there is no obvious trend in absolute
error when $s$ changes. As expected, we again observe that when $\texttt{{\rm{x}}}$ is closed to the origin, the average
number of steps will become smaller. In particular, when $\texttt{{\rm{x}}}$ is at the origin, the number of steps is one.

\begin{table}[!htbp]
 \centering
 \caption{Numerical results of Example \ref{example3} using modified walk-on-sphere method in 1D.}\label{exmp3tab1_1}
 \begin{tabular}{cccccc}
   \hline
    $s$      &samples    &$E$       &average no. steps  &variance      &CPU time(secs.) \\
   \hline
                  &1000       &9.9759E-03   &1.2540             &1.1209E-01    &0.0159       \\
    $s=0.25$      &10000      &8.7500E-04   &1.2952             &1.1191E-01    &0.1131       \\
                  &100000     &4.6390E-04   &1.2879             &1.0927E-01    &0.8675       \\
   \hline
                  &1000       &9.4393E-03   &1.5410             &1.7148E-01    &0.0178       \\
    $s=0.50$      &10000      &2.9952E-03   &1.5316             &1.6840E-01    &0.1139       \\
                  &100000     &2.0324E-04   &1.5281             &1.6630E-01    &0.9490       \\
   \hline
                  &1000       &3.5671E-03   &1.7000             &8.6924E-01    &0.8696       \\
    $s=0.75$      &10000      &1.6581E-03   &1.6945             &2.3160E-01    &6.3171       \\
                  &100000     &4.2174E-04   &1.6838             &2.7423E-01    &45.534       \\
   \hline
 \end{tabular}
\end{table}

\begin{table}[!htbp]
 \centering
 \caption{Numerical results of Example \ref{example3} for modified walk-on-sphere method in 2D.}\label{exmp3tab1}
 \begin{tabular}{cccccc}
   \hline
    $s$      &samples         &$E$       &average no. steps  &variance      &CPU time(secs.) \\
   \hline
                  &1000       &6.9086E-03   &1.7620             &1.1539E-01    &0.0216       \\
    $s=0.25$      &10000      &3.1608E-03   &1.7716             &1.0973E-01    &0.1662       \\
                  &100000     &1.3496E-04   &1.7606             &8.7965E-02    &1.5673       \\
   \hline
                  &1000       &6.4914E-03   &2.8210             &1.9263E-01    &0.0332       \\
    $s=0.50$      &10000      &2.6209E-03   &2.9709             &1.7505E-01    &0.2636       \\
                  &100000     &1.3063E-04   &2.9997             &1.7088E-01    &2.8465       \\
   \hline
                  &1000       &9.5021E-03   &5.9050             &2.2955E-01    &0.0538       \\
    $s=0.75$      &10000      &1.0741E-04   &6.1569             &2.1898E-01    &0.5109       \\
                  &100000     &2.2905E-05   &6.1818             &1.8771E-01    &5.6996       \\
   \hline
 \end{tabular}
\end{table}

\begin{table}[!htbp]
 \centering
 \caption{{Numerical results of Example \ref{example1} using Scheme I (\ref{boundarycondition1}) in 2D.}}\label{exmp3tab12}
 \begin{tabular}{cccccc}
   \hline
    $s$      &$\frac{1}{h}$        &$E(h)$         &rate   &CPU time(secs.) \\
   \hline
                  &32              &3.4047E-02     &$-$      &0.9227          \\
    $s=0.25$      &64              &2.5291E-02     &0.5159   &10.797          \\
                  &128             &1.9142E-02     &0.5099   &53.474          \\
                  &256             &1.4927E-02     &0.5462   &246.99          \\
                  &512             &1.2011E-02     &0.5317   &1922.1          \\
   \hline
                  &32              &8.6860E-03     &$-$      &0.6604          \\
    $s=0.50$      &64              &4.7663E-03     &0.4627   &4.2421          \\
                  &128             &2.7036E-03     &0.9262   &36.764          \\
                  &256             &1.6377E-03     &0.9524   &335.90          \\
                  &512             &1.0602E-03     &0.8392   &1712.6          \\
   \hline
                  &32              &1.1589E-02     &$-$      &0.7423          \\
    $s=0.75$      &64              &4.6710E-03     &0.9681   &5.3990          \\
                  &128             &1.8928E-03     &1.3157   &39.301          \\
                  &256             &8.2243E-04     &1.3769   &441.43          \\
                  &512             &3.9633E-04     &1.3286   &2169.6         \\
   \hline
 \end{tabular}
\end{table}
\begin{figure}[!htbp]   
    \begin{minipage}[t]{0.5\linewidth}
    \centering
    \includegraphics[height=6cm,width=8cm]{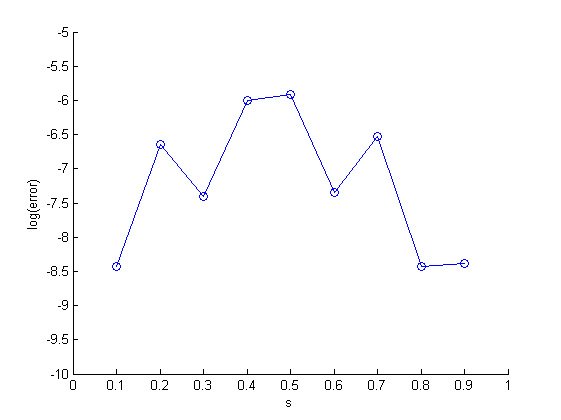}
    \end{minipage}
    \begin{minipage}[t]{0.5\linewidth}
    \centering
    \includegraphics[height=6cm,width=8cm]{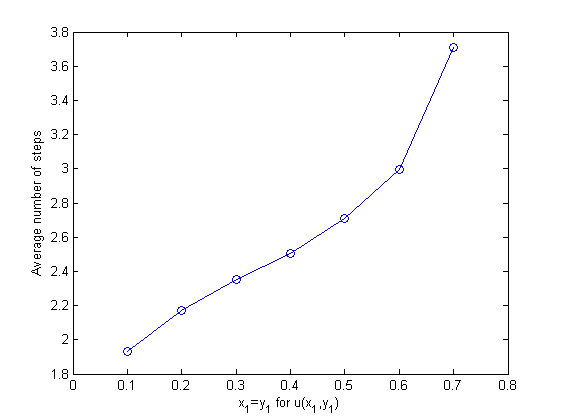}
    \end{minipage}
 \captionsetup{font={footnotesize}}
 \caption{Result for Example \ref{example2} with the  walk-one-sphere  method based on $10^5$ samples
 in two dimensions. From the left to right, we see there is no obvious relation between errors and $s$, meanwhile,
  when $\texttt{{\rm{x}}}$ is near the origin, the average number of steps will be small.
}\label{5fig3}
\end{figure}

Next, we evaluate $u(0.5,0.5,0.5)$ in three spacial dimensions. Table \ref{exmp3tab2} shows that the CPU time of
modified walk-on-sphere method does not increase too much in three dimensions compared with the time in two dimensions.
Figure \ref{5fig4} gives the same result derived in Figure \ref{5fig3}. When $\texttt{{\rm{x}}}$ approaches the
origin, the average number of steps will be small, which coincides with the theoretical analysis.

\begin{table}[!htbp]
 \centering
 \caption{Numerical results of Example \ref{example3} for modified walk-on-sphere method in 3D.}\label{exmp3tab2}
 \begin{tabular}{cccccc}
   \hline
    $s$      &samples         &$E$       &average no. steps  &variance      &CPU time(secs.) \\
   \hline
                  &1000       &4.2582E-03   &1.8890             &7.4618E-02    &0.0327       \\
    $s=0.25$      &10000      &8.6731E-04   &1.9480             &7.2944E-02    &0.1464       \\
                  &100000     &1.5456E-04   &1.9233             &7.2383E-02    &1.4511       \\
   \hline
                  &1000       &7.2182E-03   &3.9970             &1.1093E-01    &0.0348       \\
    $s=0.50$      &10000      &1.7504E-04   &3.8960             &1.0408E-01    &0.3011       \\
                  &100000     &1.3108E-04   &3.9187             &1.0043E-01    &2.9787       \\
   \hline
                  &1000       &9.3257E-04   &10.357             &1.2448E-01    &0.0810       \\
    $s=0.75$      &10000      &5.2534E-04   &10.067             &1.1665E-01    &0.7787       \\
                  &100000     &2.5671E-04   &10.132             &1.1586E-01    &7.7002       \\
   \hline
 \end{tabular}
\end{table}

\begin{figure}[!htbp]
    \begin{minipage}[t]{0.5\linewidth}
    \centering
    \includegraphics[height=6cm,width=8cm]{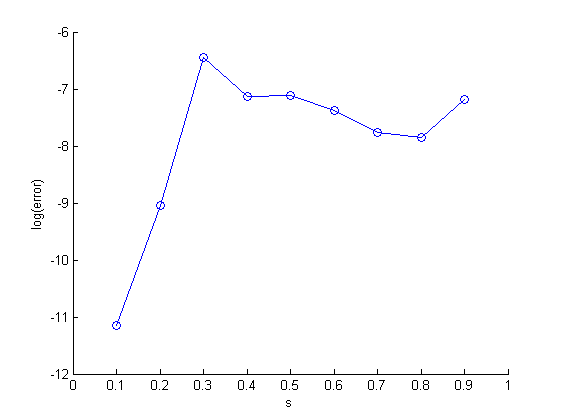}
    \end{minipage}
    \begin{minipage}[t]{0.5\linewidth}
    \centering
    \includegraphics[height=6cm,width=8cm]{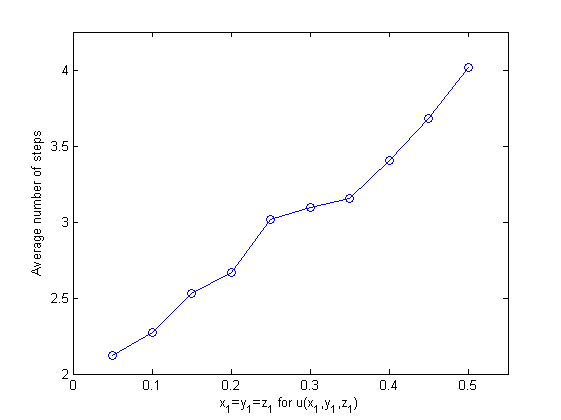}
    \end{minipage}
 \captionsetup{font={footnotesize}}
 \caption{Result for Example \ref{example2} with the  walk-one-sphere  method based on $10^5$ samples
 in three dimensions. From the left to right, we derive the same results as those in two dimensions.
}\label{5fig4}
\end{figure}
For higher dimensional cases, we still first evaluate $u(\texttt{{\rm{x}}})$ with $\texttt{{\rm{x}}}=\frac{1}{4}*\textbf{ones}(4)$ in four spacial dimensions and $u(\texttt{{\rm{x}}})$ with $\texttt{{\rm{x}}}=\frac{1}{5}*\textbf{ones}(5)$ in five spacial dimensions. The number of samples are set by $10^5$ and the numerical results is given in Table \ref{exmp3tab3}. It is noticed that the average number of steps will increase when the fractional order $s$ increases.
\begin{table}[!htbp]
 \centering
 \caption{Numerical results of Example \ref{example3} for modified walk-on-sphere method in 4D and 5D by $10^5$ samples.}\label{exmp3tab3}
 \begin{tabular}{cccccc}
   \hline
    $s$            &$n$        &$E$       &average no. steps  &variance      &CPU time(secs.) \\
   \hline
    $s=0.20$      &$n=4$      &1.0203E-03   &1.3759             &7.0657E-02    &10.931       \\
    $s=0.40$      &$n=4$      &3.8298E-04   &2.3480             &1.3780E-01    &18.279       \\
    $s=0.60$      &$n=4$      &5.9519E-04   &5.0809             &1.9744E-01    &40.125       \\
    $s=0.80$      &$n=4$      &6.2239E-04   &15.702             &2.5873E-01    &121.50       \\
   \hline
    $s=0.20$      &$n=5$      &9.3904E-04   &1.3557             &5.8018E-02    &16.349       \\
    $s=0.40$      &$n=5$      &5.6123E-04   &2.3758             &1.1718E-01    &28.644       \\
    $s=0.60$      &$n=5$      &9.7228E-04   &5.4498             &1.6952E-01    &65.567       \\
    $s=0.80$      &$n=5$      &2.5286E-03   &18.684             &2.2059E-01    &221.95       \\
   \hline
 \end{tabular}
\end{table}

We then evaluate $u(\texttt{{\rm{x}}})$ with $\texttt{{\rm{x}}}=\frac{1}{10}*\textbf{ones}(10)$ in ten spacial dimensions. The number of samples are set by $10^5$. Unlike the homogeneous equation in Examples \ref{example1} and \ref{example2}, we need to sample $\texttt{{\rm{Y}}}$ in every step so
that it will cost more computational time. However, based on the numerical results given in Table \ref{exmp3tab4} the algorithm is still fast and efficient.
\begin{table}[!htbp]
 \centering
 \caption{Numerical results of Example \ref{example3} for modified walk-on-sphere method in 10D by $10^5$ samples.}\label{exmp3tab4}
 \begin{tabular}{cccccc}
   \hline
    $s$            &$n$        &$E$       &average no. steps  &variance      &CPU time(secs.) \\
   \hline
    $s=0.10$      &$n=10$      &2.1003E-04   &1.0953             &1.9213E-02    &34.905       \\
    $s=0.30$      &$n=10$      &1.6807E-03   &1.6611             &5.9872E-02    &49.659       \\
    $s=0.50$      &$n=10$      &6.3033E-03   &3.6920             &9.9123E-02    &110.85       \\
    $s=0.70$      &$n=10$      &2.8531E-03   &12.230             &1.3642E-01    &344.63       \\
    $s=0.90$      &$n=10$      &1.8440E-03   &80.954             &1.6415E-01    &2955.7       \\
   \hline
 \end{tabular}
\end{table}

{Next, we gives the experiment of fractional Poisson equation with square boundary.
\begin{example}\label{example4}
Consider the following fractional Poisson equation with vanishing Dirichlet boundary condition
 \begin{equation}\label{eq:5possionproblem2}
  \left\{
  \begin{aligned}
    &(-\Delta)^{s}u(\texttt{{\rm{x}}})=f(\texttt{{\rm{x}}}) ,\,&&  \texttt{{\rm{x}}}\in[0,1]^{n},
    \\
  &u(\texttt{{\rm{x}}})=0 ,\, &&  \texttt{{\rm{x}}}\in \mathbb{R}^{n}\backslash [0,1]^{n},
  \end{aligned}
  \right.
 \end{equation}
 where $f(x)=1$.
\end{example}
We evaluate $u(\texttt{{\rm{x}}})$ at points $\texttt{{\rm{x}}}_{1}=\frac{1}{1000}*\textbf{ones}(10)$ and $\texttt{{\rm{x}}}_{2}=\frac{1}{10}*\textbf{ones}(10)$ in ten spacial dimensions, respectively.
The number of samples are set by $10^5$. It is reasonable that numerical results are closed to the exact
solution, since when $\texttt{{\rm{x}}}$ is near boundary, numerical solutions are almost equal to 0.
Average number of steps will increase, when $s$ grows, while it seems no relations between average
number of steps and location $\texttt{{\rm{x}}}$.}
\begin{table}[!htbp]
 \centering
 {
 \caption{Numerical results of Example \ref{example4} for modified walk-on-sphere method in 10D.}\label{exmp4tab1}
 \begin{tabular}{cccccc}
   \hline
        $location$             &$s$      &approximation  &average no. steps  &variance      &CPU time(secs.) \\
   \hline
                               &0.25     &7.711E-03      &1.9009             &1.9791E-05    &61.980\\
  $\texttt{{\rm{x}}}_{1}$      &0.50     &5.244E-05      &5.7405             &1.3905E-09    &125.64\\
                               &0.75     &3.227E-07      &26.661             &6.0199E-14    &409.68 \\
   \hline
                               &0.25     &2.430E-01      &1.8899             &1.9093E-02    &62.886\\
  $\texttt{{\rm{x}}}_{2}$      &0.50     &5.250E-02      &5.7755             &1.3489E-03    &120.84\\
                               &0.75     &1.023E-02      &26.781             &6.0105E-05    &409.18\\
   \hline
 \end{tabular}
 }
\end{table}
\begin{remark}
  Since the numerical experiments for modified walk-on-sphere method contain randomness, {the variance sometimes does not converge (e.g. Table \ref{exmp3tab4}).}

As discussed in Section \ref{sec:bound-steps-walks}, the average number of steps depends only on the domain $\Omega$, the
point $\texttt{{\rm{x}}}$ at which the solution we want to arrive, and $s$. Combining Figures \ref{5fig1}-\ref{5fig4},
we conclude that when $\Omega$ is a ball, the steps will increase if $\texttt{{\rm{x}}}$ is far from
the center of the sphere or $s$ becomes larger.
\\
\end{remark}

\section{Conclusion}

We propose  a modified
walk-on-sphere method for the fractional Laplacian problem on general domains in  high dimensions. Based on the
probabilistic representation of the problem, we carefully compute the probabilities of the random walks,
using proper quadrature rules and the modified walk-on-sphere method to sample from the probabilities. We show that
the quadrature rules are of second-order convergent   when the boundary data $g(\texttt{{\rm{x}}})\in  C_b^2
(\mathbb{R}^{n}\backslash \mathbb{B}_{r})$ and the forcing $f=0$. {When $f(\texttt{{\rm{x}}})\in  C_b^2(\mathbb{B}_{r}\backslash S_{h})$
and $g(\texttt{{\rm{x}}})=0$, we derive the numerical method in two dimensions, while the convergent order is only
$\mathcal{O}(h^{2s\wedge1})$ because of the poor property of Green function and it will cost more computational time.
So, it is necessary to propose much more efficient method for the problem.} Thus, for problems in higher dimensions, we
apply  an efficient rejection sampling method based on truncated Gaussian distribution. Also,  we estimate the mean of the
number of walks for the problem in  a ball in $n\,(n\geq2)$ dimensions and $s\in(0,1)$ and  show that the mean of the number
of walks is increasing in $s$ and the distance of the initial point to the origin.
Numerical results  verify the theoretical analysis and show the efficiency of the proposed method.  Extensions
to fractional advection-diffusion equations are currently ongoing.

{\small

}
\begin{figure}[!htbp]   
    \centering
    \includegraphics[height=10cm,width=16cm]{record.png}
\end{figure}

\end{document}